\numberwithin{equation}{section}
\DeclareMathOperator\GL{GL}
\DeclareMathOperator\id{id}
\DeclareMathOperator\Hom{Hom}
\DeclareMathOperator\gr{gr}
\DeclareMathOperator\Spec{Spec}
\DeclareMathOperator\Spf{Spf}
\DeclareMathOperator\Sp{Sp}
\DeclareMathOperator\Sym{Sym}
\DeclareMathOperator\Fil{Fil}
\DeclareMathOperator\Res{Res}
\DeclareMathOperator\WD{WD}
\renewcommand{\phi}{\varphi}
\newcommand{\dbl}{{\mathchoice{\mbox{\rm [\hspace{-0.15em}[}}
                              {\mbox{\rm [\hspace{-0.15em}[}}
                              {\mbox{\scriptsize\rm [\hspace{-0.15em}[}}
                              {\mbox{\tiny\rm [\hspace{-0.15em}[}}}}
\newcommand{\dbr}{{\mathchoice{\mbox{\rm ]\hspace{-0.15em}]}}
                              {\mbox{\rm ]\hspace{-0.15em}]}}
                              {\mbox{\scriptsize\rm ]\hspace{-0.15em}]}}
                              {\mbox{\tiny\rm ]\hspace{-0.15em}]}}}}
\newcommand{\dpl}{{\mathchoice{\mbox{\rm (\hspace{-0.15em}(}}
                              {\mbox{\rm (\hspace{-0.15em}(}}
                              {\mbox{\scriptsize\rm (\hspace{-0.15em}(}}
                              {\mbox{\tiny\rm (\hspace{-0.15em}(}}}}
\newcommand{\dpr}{{\mathchoice{\mbox{\rm )\hspace{-0.15em})}}
                              {\mbox{\rm )\hspace{-0.15em})}}
                              {\mbox{\scriptsize\rm )\hspace{-0.15em})}}
                              {\mbox{\tiny\rm )\hspace{-0.15em})}}}}
\newcommand{\Acal}{\mathcal{A}}
\newcommand{\Bcal}{\mathscr{B}}
\newcommand{\Ccal}{\mathcal{C}}
\newcommand{\Dcal}{\mathcal{D}}
\newcommand{\Ecal}{\mathcal{E}}
\newcommand{\Gcal}{\mathcal{G}}
\newcommand{\Hcal}{\mathcal{H}}
\newcommand{\Mcal}{\mathcal{M}}
\newcommand{\Ocal}{\mathcal{O}}
\newcommand{\Rcal}{\mathcal{R}}
\newcommand{\Scal}{\mathcal{S}}
\newcommand{\Tcal}{\mathcal{T}}
\newcommand{\Ttil}{\tilde\Tcal}
\newcommand{\Vcal}{\mathcal{V}}
\newcommand{\Wcal}{\mathcal{W}}
\newcommand{\Zcal}{\mathcal{Z}}
\newcommand{\Q}{\mathbb{Q}}
\newcommand{\R}{\mathbb{R}}
\newcommand{\Z}{\mathbb{Z}}
\newcommand{\Cbb}{\mathbb{C}}
\newcommand{\C}{\mathbb{C}}
\newcommand{\Abb}{\mathbb{A}}
\newcommand{\Fbb}{\mathbb{F}}
\newcommand{\Gbb}{\mathbb{G}}
\newcommand{\Pbb}{\mathbb{P}}
\newcommand{\Ubb}{\mathbb{U}}
\newcommand{\Mfrak}{\mathfrak{M}}
\newcommand{\Nfrak}{\mathfrak{N}}
\newcommand{\Xfrak}{\mathfrak{X}}
\newcommand{\mfrak}{\mathfrak{m}}
\newtheorem{theo}{Theorem}[section]
\newtheorem{lem}[theo]{Lemma}
\newtheorem{prop}[theo]{Proposition}
\newtheorem{cor}[theo]{Corollary}
\theoremstyle{remark}
\newtheorem{rem}[theo]{Remark}
\theoremstyle{remark}
\theoremstyle{definition}
\newtheorem{defn}[theo]{Definition}
\newtheorem{notation}[theo]{Notation}
\newcounter{commentcounter}
\newcounter{commentcounterS}
\def\?{\ 
{\bf\color{red}???}\ 
\immediate\write16{}
\immediate\write16{Warning: There was still a question mark . . . }
\immediate\write16{}}
\begin{document}

\title[Density of potentially crystalline representations]{Density of potentially crystalline representations of fixed weight}
\author[E. Hellmann, B. Schraen]{Eugen Hellmann and Benjamin Schraen}
\begin{abstract}
Let $K$ be a finite extension of $\Q_p$ and let $\bar\rho$ be a continuous, absolutely irreducible representation of its absolute Galois group with values in a finite field of characteristic $p$. We prove that the Galois representations that become crystalline of a fixed regular weight after an abelian extension are Zariski-dense in the generic fiber of the universal deformation ring of $\bar\rho$. In fact we deduce this from a similar density result for the space of trianguline representations. This uses an embedding of eigenvarieties for unitary groups into the spaces of trianguline representations as well as the corresponding density claim for eigenvarieties as a global input.
\end{abstract}
\maketitle

\section{introduction}

The density of crystalline representations in the generic fiber of a local deformation ring plays an important role in the $p$-adic local Langlands correspondence for $\GL_2(\Q_p)$ and was proven by Colmez \cite{Colmez} and Kisin \cite{Kisin} for $2$-dimensional representations of ${\rm Gal}(\bar\Q_p/\Q_p)$. This density statement was generalized by Nakamura \cite{Nakamura2} and Chenevier \cite{Chtrianguline} to the case of $2$-dimensional representations of ${\rm Gal}(\bar\Q_p/K)$ for finite extensions $K$ of $\Q_p$ resp.~to the case of $d$-dimensional representations of ${\rm Gal}(\bar\Q_p/\Q_p)$ and finally the general case was treated in \cite{Nakamura3}.

In this paper we prove a slightly different density result in the generic fiber of a local deformation ring. 
The above density statements make heavy use of the fact that the Hodge-Tate weights of the crystalline representations may vary arbitrarily. Contrary to this case, we fix the Hodge-Tate weights but vary the level, or, more precisely, we allow finite (abelian) ramification and allow the representation to be potentially crystalline (more precisely crystabelline).

Note that this density statement is of a different nature than the density of crystalline representations. 
The density of crystalline representations holds true in the rigid generic fiber $(\Spf R_{\bar\rho})^{\rm rig}$ of the universal deformation ring $R_{\bar\rho}$ of a given residual ${\rm Gal}(\bar\Q_p/K)$-representation $\bar\rho$. 
In contrast to this result, the density of potentially crystalline representations of fixed weight only holds true in the "naive" generic fiber $\Spec(R_{\bar\rho}[1/p])$, as the set of representations with fixed (generalized) Hodge-Tate weights is Zariski-closed in the rigid generic fiber $(\Spf R_{\bar\rho})^{\rm rig}$. 

In the special case of $2$-dimensional potentially Barsotti-Tate representations of ${\rm Gal}(\bar\Q_p/\Q_p)$ our result gives a positive answer to a question of Colmez \cite{Colmez}.

A proof of this result (for $2$-dimensional representations of ${\rm Gal}(\bar\Q_p/\Q_p)$), using the $p$-adic local Langlands correspondence, was announced previously by Emerton and Pa\v{s}k\=unas. Our approach does not make use of such a correspondence and works in all dimensions and for arbitrary finite extensions of $\Q_p$. 
We were motivated by the case of $2$-dimensional potentially Barsotti-Tate representations, as for these representations an automorphy lifting theorem is known \cite{Kisinmodularity}. We hope to apply our density result to patching techniques in the future. 

More precisely our results are as follows.
Let $K$ be a finite extension of $\Q_p$ and let $\Gcal_K={\rm Gal}(\bar\Q_p/K)$ denote its absolute Galois group. Fix an absolutely irreducible continuous representation $\bar\rho:\Gcal_K\rightarrow \GL_d(\Fbb)$ with values in a finite extension $\Fbb$ of $\Fbb_p$. As the representation is assumed to be absolutely irreducible the universal deformation ring $R_{\bar\rho}$ of $\bar\rho$ exists. 

\begin{theo}\label{Thm1}
Assume that $p\nmid 2d$ and $\bar\rho\not\cong \bar\rho(1)$. Let ${\bf k}=(k_{i,\sigma})\in \prod_{\sigma:K\hookrightarrow \bar\Q_p}\Z^d$ be a regular weight. Then the representations that are crystabelline of labeled Hodge-Tate weight ${\bf k}$ are Zariski-dense in $\Spec R_{\bar\rho}[1/p]$.
\end{theo}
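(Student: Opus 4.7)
The plan follows exactly the strategy sketched in the abstract: deduce the statement from a density result on the trianguline deformation variety, via an embedding of eigenvarieties for definite unitary groups into that variety, with the global density of classical points as the crucial input. First I would reduce the theorem to a density statement inside the trianguline variety $X_{\rm tri}(\bar\rho) \subset \Spec R_{\bar\rho}[1/p]$. By the density theorems of Nakamura and Chenevier recalled in the introduction, the trianguline locus is already Zariski-dense in $\Spec R_{\bar\rho}[1/p]$, so it suffices to prove that crystabelline representations of labeled Hodge-Tate weight ${\bf k}$ are Zariski-dense in $X_{\rm tri}(\bar\rho)$.

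Next I would bring in the global input via an eigenvariety. Using $p \nmid 2d$ and $\bar\rho \not\cong \bar\rho(1)$, I would globalize $\bar\rho$ to a modular Galois representation $\bar r$ for a definite unitary group $G/F^+$, where $F/F^+$ is an auxiliary CM extension and $v\mid p$ is a place of $F^+$ split in $F$ with $F^+_v \cong K$. Attaching to this data the eigenvariety $Y(\bar r, U^p)$ of tame level $U^p$, one obtains the embedding
\[ Y(\bar r, U^p) \hookrightarrow X_{\rm tri}(\bar\rho) \times \Wcal, \]
sending a point to its local Galois representation at $v$ equipped with the triangulation determined by the refinement, together with its weight in weight space $\Wcal$. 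Under this map, classical points of weight ${\bf k}$ land precisely in the locus of refined crystabelline representations of weight ${\bf k}$. The required global density statement is then the Zariski-density of classical points of fixed regular weight ${\bf k}$ in the fiber of $Y(\bar r, U^p)$ over ${\bf k}\in\Wcal$, which follows from the classicality criterion for small-slope finite-slope overconvergent $p$-adic automorphic forms, combined with the existence of enough classical automorphic forms of weight ${\bf k}$ at level $U^p$.

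The remaining and hardest step is to show that, as $U^p$ and the globalization $\bar r$ vary, the union of the images of these eigenvarieties exhausts a Zariski-dense subset of $X_{\rm tri}(\bar\rho)$. Equivalently, every irreducible component of the trianguline variety must contain a point coming from a classical automorphic form on some unitary group. This will require a precise description of the irreducible components of $X_{\rm tri}(\bar\rho)$ (indexed by refinements, with a distinguished smooth generic component) together with an automorphy-of-trianguline-deformations statement along each such component. The hypotheses that $\bar\rho$ is absolutely irreducible, that $p\nmid 2d$ and that $\bar\rho \not\cong \bar\rho(1)$ will enter precisely here, to ensure the existence of convenient globalizations and the smoothness of the relevant Galois deformation rings, enabling the Taylor--Wiles method to propagate automorphy inside $X_{\rm tri}(\bar\rho)$. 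Controlling this global-to-local comparison is the principal obstacle; once it is in place, the density in $\Spec R_{\bar\rho}[1/p]$ follows formally from the two density statements of Steps 1 and 3.
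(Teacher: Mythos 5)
The broad blueprint (globalize $\bar\rho$, embed eigenvarieties for definite unitary groups into the trianguline variety, feed in a global density result for classical points, then descend to the local deformation space) matches the paper. But your Step 3 asks for something both too strong and not quite what is needed, and this is where the actual difficulty of the paper lies.

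You phrase the last step as: the union of the images of eigenvarieties must exhaust a Zariski-dense subset of $X_{\rm tri}(\bar\rho)$, equivalently every irreducible component of the trianguline variety must contain an automorphic classical point, to be proved by a Taylor--Wiles modularity propagation inside $X_{\rm tri}(\bar\rho)$. Two problems. First, these two formulations are not equivalent, and neither is the right target: what is actually needed is density of the \emph{image} of the trianguline variety under the projection $X(\bar\rho) \to \Xfrak_{\bar\rho}$ to the deformation space, not density inside the trianguline variety itself. For $d>1$ the trianguline variety has strictly smaller dimension than $\Xfrak_{\bar\rho}$ and the projection is far from surjective, so ``dense subset of $X_{\rm tri}$'' and ``dense image in $\Xfrak_{\bar\rho}$'' are genuinely different questions. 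Second, the paper does \emph{not} prove that every component of the trianguline variety is automorphic, and there is no mechanism in the paper (or, as far as I know, in the literature) that would yield such a statement; what the paper does instead is isolate the union $X(\bar\rho,W_\infty)^{\rm aut}$ of the components of $X(\bar\rho)$ that \emph{do} meet an eigenvariety and prove directly that this (possibly proper) union already has dense image in $\Xfrak_{\bar\rho}$ (Theorem~\ref{crystrepdense}). The engine behind this is a refinement of the Chenevier--Nakamura density argument: it suffices to produce a single generic crystalline point of $X(\bar\rho,W_\infty)^{\rm aut}$ all of whose $d!$ triangulations lie in $X(\bar\rho,W_\infty)^{\rm aut}$. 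This is where the Kisin-style crystalline deformation ring, its universal Breuil--Kisin module (Proposition~\ref{genericO+open}), and Taylor--Wiles--Kisin patching (Theorem~\ref{O+densecomp}) enter --- not to propagate automorphy to all trianguline components, but to show that automorphic points are Zariski-dense in automorphic components of Kisin's fixed-weight crystalline ring, so that one can locate such a good point and compare its $d!$ refinements. The hypothesis $\bar\rho \not\cong \bar\rho(1)$ then finishes the argument by ensuring that $\Spec R_{\bar\rho}[1/p]$ is smooth and \emph{irreducible}, so that a dense subset of any non-empty union of components is already dense in all of it; it plays no role in the globalization step, contrary to what your write-up suggests.

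Separately, your Step 1 reduction is also slightly off for the same reason: you cannot simply quote Nakamura--Chenevier density of the trianguline locus and then work ``inside'' the trianguline variety, because a Zariski-dense collection of crystabelline points of weight ${\bf k}$ \emph{inside the lower-dimensional trianguline variety} does not automatically give density in $\Spec R_{\bar\rho}[1/p]$. The paper instead proves the inclusion $X(\bar\rho,W_\infty)^{\rm aut}\subset X_{\bf k}(\bar\rho)$ into the $R_{\bar\rho}$-closure of the crystabelline weight-${\bf k}$ locus (Theorem~\ref{densityinfinslopespace}), and combines this with the density of the image of $X(\bar\rho,W_\infty)^{\rm aut}$ in $\Spec R_{\bar\rho}[1/p]$. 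These two statements together give the theorem; your reduction glosses over the change of ambient space.
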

Similarly to the proof of density of crystalline representations we use a so called \emph{space of trianguline representations}. This space should be seen as a local Galois-theoretic counterpart of an eigenvariety of Iwahori level. Indeed it was shown in \cite{finslope} that certain eigenvarieties embed into a space of trianguline representations in the case $K=\Q_p$. This result is generalized to the case of an arbitrary extension $K$ of $\Q_p$ in section $\ref{eigenvar}$ below.  
In fact we prove the following density result for eigenvarieties which might be of independent interest. 

Let $E$ be an imaginary quadratic extension of a totally real field $F$ such that $[F:\Q]$ is even and let $G$ be a definite unitary group over $F$ which is quasi-split at all finite places. Let $Y$ be an eigenvariety for a certain set of automorphic representations of $G(\Abb_F)$ as in \cite[3]{Ch3} which comes along with a Galois pseudo-character interpolating the Galois representations attached to the automorphic representations at the classical points of $Y$. Given an absolutely irreducible residual representation $\bar\rho:{\rm Gal}(\bar\Q/E)\rightarrow \GL_d(\Fbb)$ there is an open and closed subspace $Y_{\bar\rho}\subset Y$ where the pseudo-character reduces to (the pseudo-character attached to) $\bar\rho$ modulo $p$. This gives rise to a map $Y_{\bar\rho}\rightarrow (\Spf R_{\bar\rho})^{\rm rig}$ to the rigid generic fiber of the universal deformation ring $R_{\bar\rho}$ of $\bar\rho$. 
\begin{theo}\label{Thm2}
Fix an algebraic irreducible representation $W$ of $G(F\otimes_{\Q}\R)$.
Let $f\in R_{\bar\rho}$ such that $f$ vanishes on all classical points $z\in Y_{\bar\rho}$ corresponding to irreducible automorphic representations $\Pi$ with $\Pi_{\infty}=W$. Then $f$ vanishes in $\Gamma(Y_{\bar\rho},\Ocal_Y)$.
\end{theo}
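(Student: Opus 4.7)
The plan is to realize the map $R_{\bar\rho}\to\Gamma(Y_{\bar\rho},\Ocal_Y)$ concretely through the action on a Banach module of $p$-adic automorphic forms, and then to pass the vanishing hypothesis from the (finite-dimensional) weight-$W$ classical subspace to the full module. Let $\widehat{S}(K^p,L)_{\bar\rho}$ denote the completed space of $p$-adic automorphic forms on $G$ at tame level $K^p$, localized at the maximal ideal of the spherical Hecke algebra corresponding to $\bar\rho$. Because $\bar\rho$ is absolutely irreducible the interpolating Galois pseudo-character promotes to an action of $R_{\bar\rho}$ on $\widehat{S}(K^p,L)_{\bar\rho}$ that commutes with $G(F\otimes_{\Q}\Q_p)$ and with the Hecke algebra. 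The eigenvariety $Y_{\bar\rho}$ arises as the support of a coherent sheaf $\Mcal$ built from the Jacquet--Emerton module of the locally analytic vectors $\widehat{S}(K^p,L)_{\bar\rho}^{\mathrm{la}}$, and $\Gamma(Y_{\bar\rho},\Ocal_Y)$ acts faithfully on $\Mcal$; in particular, the image of $f\in R_{\bar\rho}$ in $\Gamma(Y_{\bar\rho},\Ocal_Y)$ is determined by its action on $\Mcal$.

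First I would interpret the classical weight-$W$ points: they correspond exactly to the Hecke eigensystems appearing in the finite-dimensional $W$-algebraic subspace $\widehat{S}(K^p,L)_{\bar\rho}^{W\text{-alg}}$, where each automorphic $\Pi$ with $\Pi_{\infty}=W$ contributes one eigensystem per refinement of its Frobenius eigenvalues at $p$. The hypothesis of Theorem~\ref{Thm2} thus translates to: $f$ annihilates $\widehat{S}(K^p,L)_{\bar\rho}^{W\text{-alg}}$. The conclusion follows once I show that this already forces $f$ to annihilate the full Jacquet module $J_B(\widehat{S}(K^p,L)_{\bar\rho}^{\mathrm{la}})$, whence $f=0$ in $\Gamma(Y_{\bar\rho},\Ocal_Y)$.

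For this propagation, I would exploit that the $R_{\bar\rho}$-action commutes with the $G(F\otimes_{\Q}\Q_p)$-action on $\widehat{S}(K^p,L)_{\bar\rho}$: the annihilator ideal of $R_{\bar\rho}$ on this admissible Banach representation is $G(F\otimes_{\Q}\Q_p)$-invariant, so if $f$ kills the $W$-algebraic part it also kills every $G(F\otimes_{\Q}\Q_p)$-translate of it. Combined with a density statement showing that $G(F\otimes_{\Q}\Q_p)$-translates of weight-$W$ algebraic vectors generate, in a suitable topological/locally analytic sense, the full locally analytic part, this gives annihilation on all of $\widehat{S}(K^p,L)_{\bar\rho}^{\mathrm{la}}$ and hence on the Jacquet module.

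The main obstacle I foresee is making this density step rigorous---in particular, showing that the $G(F\otimes_{\Q}\Q_p)$-subrepresentation generated by weight-$W$ algebraic vectors is large enough after localization at $\bar\rho$ to detect all elements of $\Gamma(Y_{\bar\rho},\Ocal_Y)$. A natural route is through Schneider--Teitelbaum--Emerton locally analytic representation theory combined with an admissibility argument, possibly refined by a Taylor--Wiles-style patching of weight-$W$ classical forms to control the module structure of $\widehat{S}(K^p,L)_{\bar\rho}^{\mathrm{la}}$ over a completed deformation ring and transfer the annihilation property back down.
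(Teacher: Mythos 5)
Your setup matches the paper's: both realize the map $R_{\bar\rho}\to\Gamma(Y_{\bar\rho},\Ocal_Y)$ through the action of $R_{\bar\rho}$ on the completed module $\hat S(H^{v_0},L)_{\mfrak}$ of $p$-adic automorphic forms, and both reduce the statement to a density claim inside the $N_0$-invariants (or Jacquet module) of that Banach space. However, there is a genuine gap at the crucial step, and the "density statement" you defer to is exactly the hard part of the theorem, which the paper settles by a specific Bushnell--Kutzko types argument that your outline does not contain.

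The first problem is the translation of the hypothesis. You write that the hypothesis "translates to: $f$ annihilates $\hat S(K^p,L)_{\bar\rho}^{W\text{-alg}}$." This is not correct. The classical points of $Y_{\bar\rho}$ are by construction finite-slope: they correspond to automorphic $\Pi$ such that $\Pi_{v_0}$ has nonzero Jacquet module (i.e.\ $\Pi_{v_0}$ is a subquotient of a principal series). The full weight-$W$ locally algebraic subspace $\hat S^{W\text{-alg}}=W\otimes\hat S^{\mathrm{sm}}$ also contains the isotypic components of $\Pi$ with $\Pi_{v_0}$ supercuspidal, and those $\Pi$ contribute \emph{no} point of $Y_{\bar\rho}$. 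So the hypothesis only gives that $f$ kills the finite-slope (non-supercuspidal at $v_0$) part of $\hat S^{W\text{-alg}}$, which is a proper, non-dense subspace of $\hat S$. The naive "smooth vectors are dense" argument therefore does not apply.

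The second problem is the proposed mechanism to propagate the vanishing. Translating by $G(F\otimes_\Q\Q_p)$ does nothing here: the weight-$W$ locally algebraic subspace (and a fortiori its finite-slope part, viewed as a sum of $\Pi$-isotypic pieces) is already $G(F\otimes_\Q\Q_p)$-stable, since the group action preserves Hecke eigensystems and the $\mathfrak{g}$-module structure. So the $G$-span adds nothing, and the "suitable topological density" you invoke is exactly the statement that still has to be proved. The paper proves precisely this density, but in a sharp form and only after passing to $N_0$-invariants: it constructs Bushnell--Kutzko types $(I(\chi),\chi)$ for smooth characters $\chi=\bigotimes\chi_i$ of $T^0$ with strictly decreasing levels (Proposition on types, which also shows the associated principal series are finite slope, hence appear on $Y_{\bar\rho}$), then proves that $\sum_{\chi}{\rm Ind}_{I(\chi)}^{\GL_d(\Ocal_K)}(\chi)$ is dense in $C(N_0\backslash\GL_d(\Ocal_K),\Cbb_p)$ (using the non-existence of a $\Cbb_p$-valued Haar measure on $\Ocal_K^{\times}$), and finally uses the realization of $\hat S(H^{v_0},L)_{\mfrak}$ as a direct factor of $C(\GL_d(\Ocal_K),L)^r$ to conclude $f=0$ on $\hat S(H^{v_0},L)_{\mfrak}^{N_0}$ and hence on $Y_{\bar\rho}$. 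It is this types-density argument, together with passage to $N_0$-invariants (where finite-slope forms \emph{can} approximate the contribution of supercuspidal $\Pi_{v_0}$), that supplies the missing content. Finally, the Taylor--Wiles patching you mention at the end plays no role in the paper's proof of this particular theorem; patching is used only later, for the density result in the crystalline deformation ring.
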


We prove Theorem $\ref{Thm1}$ by extending Theorem $\ref{Thm2}$ to the space of trianguline representations $X(\bar\rho_{w_0})$, using a map $f:Y_{\bar\rho}\rightarrow X(\bar\rho_{w_0})$ constructed in Theorem $\ref{maptofinslopespace}$ below. Here $\bar\rho_{w_0}$ is the restriction of $\bar\rho$ to the decomposition group at some place $w_0$ of $E$ dividing $p$.  
The second step in the proof of Theorem $\ref{Thm1}$ then is to realize a given residual representation $\bar\rho:\Gcal_K\rightarrow \GL_d(\Fbb)$ as the restriction to the decomposition group at $w_0$ of a ${\rm Gal}(\bar\Q/E)$-representation arising from an automorphic representation of $G(\Abb_F)$. This construction was already carried out in \cite{KisinGee} or \cite{EmertonGee} for example. Using Theorem \ref{Thm2}, we are then able to prove that an element of $R_{\bar\rho_{w_0}}$ vanishing at all crystabelline points of Hodge-Tate weight ${\bf k}$ has to vanish on all those irreducible components of $X(\bar\rho_{w_0})$ that contain (the image of) an eigenvariety.

The final step is to use use the Zariski-density of the image of the space of trianguline representations in the deformation space, which is the main result of \cite{Nakamura2} and \cite{Chtrianguline}. We have however to refine this density statement replacing this space of trianguline representations by the union of its irreducible components containing automorphic points of finite slope, {i.e.~we prove the following theorem (see the body of the paper for a more precise definition of $X(\bar\rho,W_\infty)^{\rm aut}$ and $Y(W_\infty,S,e)_{\bar\rho}$).
\begin{theo}
Let $X(\bar\rho,W_\infty)^{\rm aut}$ denote the union of those components of the space of trianguline representations that contain the image of some eigenvariety $Y(W_\infty,S,e)_{\bar\rho}$ of non-specified level $S$ away from $p$. Then $X(\bar\rho,W_\infty)^{\rm aut}$ has Zariski-dense image in the rigid analytic generic fiber of the universal deformation ring of $\bar\rho_{w_0}$.
\end{theo}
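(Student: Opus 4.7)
The plan is to show that every irreducible component of the trianguline space $X(\bar\rho_{w_0})$ is already one of the components defining $X(\bar\rho,W_\infty)^{\rm aut}$, and then invoke the Nakamura--Chenevier density theorem \cite{Nakamura2,Chtrianguline}, which asserts that $X(\bar\rho_{w_0})$ itself has Zariski-dense image in the rigid generic fiber of the universal deformation ring of $\bar\rho_{w_0}$. The whole problem is thereby reduced to a non-emptiness statement: every component of $X(\bar\rho_{w_0})$ must meet the image of some eigenvariety $Y(W_\infty,S,e)_{\bar\rho}$.

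Fix an irreducible component $X'\subset X(\bar\rho_{w_0})$. I would first choose a crystabelline point $x\in X'$ of regular Hodge--Tate weight matching $W_\infty$ whose crystalline Frobenius eigenvalues are pairwise distinct and sufficiently generic (in particular so that the refinement of the triangulation is unambiguous); the local structure of the trianguline variety underlying the Nakamura--Chenevier argument shows that such points are Zariski-dense in $X'$. Write $\rho_x : \Gcal_K \to \GL_d(\bar\Q_p)$ for the associated Galois representation. The next step is the globalization: following the construction used in the reduction of Theorem $\ref{Thm1}$ to Theorem $\ref{Thm2}$, together with the techniques of \cite{EmertonGee}, one finds an imaginary CM pair $E/F$ with $w_0$ a place of $E$ above $p$, a definite unitary group $G$ over $F$ quasi-split at all finite places, a prime-to-$p$ level $S$, and an automorphic representation $\Pi$ of $G(\Abb_F)$ with archimedean component $W_\infty$ such that the associated Galois representation satisfies $\rho_{\Pi,w_0}\cong \rho_x$. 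The ordering of the Frobenius eigenvalues determines a refinement $e$ and a classical point on the eigenvariety $Y(W_\infty,S,e)_{\bar\rho}$, and Theorem $\ref{maptofinslopespace}$ then produces a map from this eigenvariety into $X(\bar\rho_{w_0})$ sending the classical point to $x\in X'$. Hence $X'$ meets the image of an eigenvariety, so by definition $X'\subset X(\bar\rho,W_\infty)^{\rm aut}$, and applying this to each component finishes the proof.

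The main obstacle is the globalization step: one must realize a prescribed and a priori completely general local representation $\rho_x$ as the restriction at $w_0$ of a global automorphic Galois representation of the precise archimedean weight $W_\infty$. This is where the hypotheses on $\bar\rho$ (absolute irreducibility, $\bar\rho\not\cong\bar\rho(1)$, $p\nmid 2d$) enter, to run the potential automorphy / automorphy lifting input behind \cite{EmertonGee}; one must also verify that after imposing all open conditions needed for the globalization to succeed — on $\rho_x$ as well as on the refinement data — the locus of permissible $x$ remains Zariski-dense in each $X'$. The flexibility of varying $S$ and $e$, which is built into the definition of $X(\bar\rho,W_\infty)^{\rm aut}$, is what makes this plausible, and checking that these open conditions do not accidentally cut out a thin subset of some component is the technical heart of the argument.
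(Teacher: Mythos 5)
Your plan is to show that \emph{every} irreducible component of $X(\bar\rho_{w_0})$ already lies in $X(\bar\rho,W_\infty)^{\rm aut}$ and then invoke the Nakamura--Chenevier density theorem. This would reduce the theorem to the claim that every component of the trianguline space contains an automorphic point. That claim is not established in the paper, and in fact the argument is set up precisely to \emph{avoid} having to prove it: the paper never asserts that $X(\bar\rho,W_\infty)^{\rm aut}$ exhausts $X(\bar\rho_{w_0})$, only that its \emph{image} in $\Xfrak_{\bar\rho_{w_0}}$ is dense in a union of irreducible components (Theorem \ref{crystrepdense}), and the final density in the generic fiber is then obtained because the hypotheses $\bar r$ absolutely irreducible and $\bar r\not\cong\bar r(1)$ force $\Spec R_{\bar r}[1/p]$ to be irreducible.

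The concrete gap in your globalization step is this: given an arbitrary generic crystalline $\rho_x$ in a given component $X'$, there is no tool available to produce an automorphic representation $\Pi$ with $\rho_{\Pi}|_{\Gcal_{w_0}}\cong\rho_x$ and prescribed archimedean weight. The appendix of \cite{EmertonGee} lets one realize the \emph{residual} representation $\bar r$ as the restriction of a globally automorphic $\bar\rho$, and Taylor--Wiles--Kisin patching then shows that automorphic points are dense in those components of the local \emph{crystalline} deformation ring that contain at least one automorphic point (Theorem \ref{O+densecomp}). But one cannot dictate that a prescribed lift $\rho_x$ lies on such a component; asserting that one can amounts to an unproven Fontaine--Mazur type automorphy statement, and is exactly the issue that forces the paper's more delicate route. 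Moreover, even granting existence of some automorphic lift, there is no freedom to impose the weight $W_\infty$ at all archimedean places while simultaneously prescribing $\rho_x$ at $w_0$.

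The actual proof proceeds differently, and the technical heart lies where you do not look: one first finds, inside $X(\bar\rho,W_\infty)^{\rm aut}$, a classical point $y$ of suitably dominant weight $\bf k$ whose image $r_y$ lies on an automorphic component $C$ of $\Xfrak_{\bar r,{\bf k}}^{\rm cris}$ meeting the smooth locus $X^{\rm sm}$ of the closure of the image. By Theorem \ref{O+densecomp} and Proposition \ref{genericO+open} one may then move within $C$ to a \emph{generic} automorphic crystalline point $r_{y'}$. Because $\Pi_{v_0}$ is unramified, all $d!$ unramified principal-series refinements $\chi_i$ give classical points of the eigenvariety above $r_{y'}$, so all $d!$ triangulations of $r_{y'}$ already lie in $X(\bar\rho,W_\infty)^{\rm aut}$ (and in the regular locus, by genericity). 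Proposition \ref{fibsvsKisin} (étaleness of $g_{\bf k}$ over the regular locus) is then what propagates this simultaneously for all refinements to a whole neighborhood in $\Xfrak_{\bf k}^{\rm reg}$, producing the generic crystalline point $r\in X^{\rm sm}$ satisfying conditions (i) and (ii) that the Chenevier--Nakamura local argument requires. None of these ingredients appears in your proposal, which substitutes an unavailable global-realization claim for the patching-plus-étaleness mechanism.
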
} 
In this aim, we have to prove that this union of component contains sufficiently many crystalline points which are non critical and whose all refinements are non critical \emph{and} stay in this particular union of irreducible components. We then reduce this existence to the proof of the fact that such \emph{generic} crystalline points form a Zariski open subset of the \emph{scheme} parametrizing crystalline representations of fixed Hodge-Tate weights together with a density statement of automorphic points in an union of irreducible components of this space. The first of these two facts is proved using the existence of an universal Breuil-Kisin module on such a space and the second using the theory of Taylor-Wiles-Kisin systems.
\bigskip

{\bf Acknowledgments:} We would like to thank C.~Breuil, M.~Emerton, M.~Rapoport, P.~Scholze and V.~S\'echerre for helpful comments and conversations. 
Especially we would like to thank G.~Chenevier for answering a lot of questions and for pointing out a gap in the first version of the paper.
Further we thank O.~Taibi for pointing out a gap in the original argument.  
The first author was supported by a Forschungsstipendium He 6753/1-1 of the DFG and acknowledges the hospitality of the Institut de math\'ematiques de Jussieu in 2012 and 2013. Further he was partially supported by the SFB TR 45 of the DFG.
The second author was supported by the CNRS, the University of Versailles and the project Th\'eHopaD ANR-2011-BS01-005. He would like to thank IH\'ES for its hospitality during the last part of the redaction.
\bigskip

{\bf Notations:} We fix the following notations. Let $\bar\Q_p$\index{cl\^oture alg\'ebrique de $\Q_p$} be an algebraic closure of $\Q_p$. Let $K\index{base field}\subset \bar\Q_p$ be a finite extension of $\Q_p$ and let $K_0\index{unramified base field}$ denote the maximal unramified subextension of $\Q_p$ in $K$. We fix a compatible system $\epsilon_n\in\bar \Q_p$ of $p^n$-th roots of unity. Let $K_n=K(\epsilon_n)\subset \bar \Q_p$ and $K_\infty=\bigcup_n K_n$. We  will write $\Gcal_L={\rm Gal}(\bar \Q_p/L)$ for any subfield $L\subset \bar \Q_p$. Finally we write $\Gamma=\Gamma_K={\rm Gal}(K_\infty/K)$. We define the Hodge-Tate weights of a de Rham representation as the opposite of the gaps of the filtration on the covariant de Rham functor, so that the Hodge-Tate weight of the cyclotomic character is $+1$.

We choose a uniformizer $\varpi\in \Ocal_K$ and normalize the reciprocity isomorphism ${\rm rec}_K:K^\times\rightarrow W_K^{\rm ab}$ of local class field theory such that $\varpi$ is mapped to a geometric Frobenius automorphism. 
Here $W_K^{\rm ab}$ is the abelization of the Weil group $W_K\subset \Gcal_K$ and the reciprocity map allows us to identify $\Ocal_K^\times$ with a subgroup of $\Gcal_K^{\rm ab}$, the maximal abelian quotient of $\Gcal_K$.
Further we write $\varepsilon:\Gcal_K\rightarrow Z_p^\times$ for the cyclotomic character. 

Finally, given a crystalline  (resp.~semi-stable) representation $\rho:\Gcal_K\rightarrow \GL_d(\bar\Q_p)$ we write $D_{\rm cris}(\rho)$ (resp.~$D_{\rm st}(\rho)$) for the filtered $\varphi$-module (resp.~$(\varphi,N)$-module) associated to $\rho$. Further we write $\WD(D_{\rm cris}(\rho))$ (resp.~$\WD(D_{\rm st}(\rho))$) for the Weil-Deligne representation associated to $D_{\rm cris}(\rho)$ (resp.~$D_{\rm st}(\rho)$) by the recipe of Fontaine \cite{Fontaine}. A similar notation is used for potentially crystalline (resp.~potentially semi-stable representations).

\section{The space of trianguline representations}

Let $X$ be a rigid analytic space and recall the definition of the sheaf of relative Robba rings $\Rcal_X=\Rcal_{X,K}$ for $K$. If the base field $K$ is understood we will omit the subscript $K$ from the notation. This is the sheaf of functions that converge on the product of $X$ with some boundary part of the open unit disc over $K_0$, see \cite[2.2]{Galrep} or \cite[Definition 2.2.3]{KedlayaPX} for example\footnote{The sheaf $\Rcal_X$ is denoted by $\Bcal_{X, \rm rig}^\dagger$ in \cite{Galrep}}. If $X=\Sp L$ for a finite extension $L$ of $\Q_p$ we will write $\Rcal_L=\Rcal_{L,K}$ for (the global sections of) this sheaf.  
This sheaf of rings is endowed with a continuous $\Ocal_X$-linear ring homomorphism $\phi:\Rcal_X\rightarrow \Rcal_X$ and a continuous $\Ocal_X$-linear action of the group $\Gamma$. 
 Recall that a $(\phi,\Gamma)$-module over a rigid space $X$ consists of an $\Rcal_X$-module $D$ that is locally on $X$ finite free over $\Rcal_X$ together with a $\phi$-linear isomorphism $\Phi:D\rightarrow D$ and a semi-linear $\Gamma$-action commuting with $\Phi$. 

Let us write $\Ubb_L$ for the open unit disc over a $p$-adic field $L$ and $\Ubb_{r,L}\subset \Ubb_L$ for the admissible open subspace of points of absolute value $\geq r$ for some $r\in p^\Q\cap[0,1)$.
Given such an $r$ we write $\Rcal_X^r$ for the sheaf \[X\supset U\longmapsto \Gamma(U\times \Ubb_{r,K_0},\Ocal_{U\times \Ubb_{r,K_0}})\] and we write $\Rcal_X^+$ for the sheaf $\Rcal_X^0$ of functions converging on the product $X\times\Ubb_{K_0}$. 

Given a family of $\Gcal_K$-representations $\Vcal$ over a rigid space $X$, the work of Berger-Colmez \cite{BergerColmez} and Kedlaya-Liu \cite{KedlayaLiu} associates to $\Vcal$ a $(\phi,\Gamma)$-module ${\bf D}^\dagger_{\rm rig}(\Vcal)$ over $\Rcal_X$. 

Given a $(\phi,\Gamma)$-module $D$ over $X$, we write $H_{\phi,\Gamma}^\ast(D)$ for the cohomology of the complex
\[\begin{xy}
\xymatrix{
C_{\phi,\Gamma}^\bullet(D)=[D^{\Delta}\ar[rr]^{\phi-\id,\gamma-\id}&&D^{\Delta}\oplus D^{\Delta}\ar[rr]^{(\id-\gamma)\oplus(\phi-\id)}&&D^{\Delta}],
}
\end{xy}\] 
where $\Delta\subset \Gamma$ is the $p$-torsion subgroup of $\Gamma$ and $\gamma\in \Gamma/\Delta$ is a topological generator. It is known that the cohomology sheaves $H^i_{\phi,\Gamma}(D)$ are coherent $\Ocal_X$-modules for $i=0,1,2$ see \cite[Theorem 4.4.5]{KedlayaPX}.

\subsection{The parameters}
In this section, we recall the construction of the space $(\varphi,\Gamma)$-modules of rank $1$ over $\Rcal$ essentially following \cite{Colmez}. This is first step toward a construction of the {\emph{finite slope space}}.

Let $\Wcal=\Hom_{\rm cont}(\Ocal_K^\times,\Gbb_m(-))$ be the \emph{weight space} of $K$. This functor on the category of rigid analytic spaces is representable by the generic fiber of $\Spf \Z_p\dbl \Ocal_K^\times \dbr$. Further let $\Tcal=\Hom_{\rm cont}(K^\times,\Gbb_m(-))$. There is a natural projection $\Tcal\rightarrow \Wcal$ given by restriction to $\Ocal_K^\times$. The choice of the uniformizer $\varpi$ gives rise to a section of this projection and identifies $\Tcal$ with $\Gbb_m\times\Wcal$ via $\delta\mapsto (\delta(\varpi),\delta|_{\Ocal_K^\times})$. Especially $\Tcal$ is representable by a rigid space. 

We recall how the $(\phi,\Gamma)$-modules of rank $1$ over a rigid space $X$ are classified by $\Tcal(X)$, see \cite[Theorem 6.1.10]{KedlayaPX} (and also \cite[1.4]{Nakamura1} for the case $X=\Sp L$ in the context of $B$-pairs).

Let $X$ be a rigid space over $\Q_p$ and let $D$ be a rank $1$ family of $K$-filtered $\phi$-modules over $X$. Recall that this is a coherent $\Ocal_X\otimes_{\Q_p}K_0$-module that is locally on $X$ free of rank $1$ together with an $\id\otimes\phi$-linear automorphism $\Phi:D\rightarrow D$ and a filtration $\Fil^\bullet$ on $D_K=D\otimes_{K_0}K$ by $\Ocal_X\otimes_{\Q_p}K$ submodules that are locally on $X$ direct summands as $\Ocal_X$-modules. 

Assume that $X$ is defined over the normalization $K^{\rm norm}$ of $K$ inside $\bar\Q_p$ and assume that $D$ is free. Then such a $K$-filtered $\phi$-module may be described as follows.
There exists a uniquely determined $a\in \Gamma(X,\Ocal_X^\times)$ and uniquely determined $k_\sigma\in \Z$ for each embedding $\sigma:K\hookrightarrow K^{\rm norm}$ such that $D\cong D(a; (k_\sigma)_\sigma)$ where $\Phi^{[K_0:\Q_p]}$ acts on $D(a; (k_\sigma)_\sigma)$ via multiplication with $a\otimes \id\in \Gamma(X,\Ocal_X\otimes_{\Q_p}K_0)^\times$ and 
\begin{equation}\label{HTwts}
({\gr^i} D_K)\otimes_{\Ocal_X\otimes_{\Q_p}K,\id\otimes\sigma}\Ocal_X \cong \begin{cases}0 & i\neq {-k_\sigma} \\ \Ocal_X & i={-k_\sigma}\end{cases}
\end{equation}
for all embeddings $\sigma:K\hookrightarrow \bar\Q_p$.

Given $k_\sigma\in \Z$ for each embedding $\sigma:K\hookrightarrow \bar\Q_p$ we consider the following special $K$-filtered $\phi$-module $D((k_\sigma)_\sigma)$ over $L=K^{\rm norm}$ whose filtration is given by $(\ref{HTwts})$ and which has a basis on which {$\Phi^{[K_0:\mathbb{Q}_p]}$} acts via multiplication with $\prod_\sigma \sigma(\varpi)^{k_\sigma}$.

Let $X$ be a rigid space defined over $K^{\rm norm}$ and let $D$ be a $K$-filtered $\phi$-module over $X$. Associated to $D$ there is a $(\phi,\Gamma)$-module $\Rcal_X(D)$ of rank $1$ as follows. We write \[D=D(a; (0)_\sigma)\otimes_{K_0} D((k_\sigma)_\sigma)\] for some $k_\sigma\in \Z$ and $a\in \Gamma(X,\Ocal_X^\times)$ and define 
\[\Rcal_X(D(a;(0)_\sigma))= D(a;(0)_\sigma)\otimes_{\Ocal_X\otimes_{\Q_p}K_0}\Rcal_X,\]
where $\phi$ acts diagonally and $\Gamma$ acts by acting on the second factor.
 
Let us further fix a period $t_{\id}\in \Rcal^+_{K}$ of the Lubin-Tate character $\chi_{\rm LT}:\Gcal_K\rightarrow \Ocal_K^\times$ corresponding to our choice of a uniformizer $\varpi\in\Ocal_K$. Given $\sigma:K\hookrightarrow K^{\rm norm}$ we write $t_\sigma\in \Rcal^+_{K^{\rm norm}}$ for the period of $\sigma\circ\chi_{\rm LT}:\Gcal_K\rightarrow \Ocal_{K^{\rm norm}}^\times$. Then $t=\prod_\sigma t_\sigma$ is the usual period of the cyclotomic character $t=\log([(1,\epsilon_1,\epsilon_2,\dots)])\in\Rcal^+_{\Q_p}\subset \Rcal^+_{K^{\rm norm}}$.
Using this notations we write
\[\Rcal_X(D((k_\sigma)_\sigma))= \prod\nolimits_{\sigma} t_\sigma^{k_\sigma}\Rcal_X\subset \Rcal_X\big[\tfrac{1}{t}\big]\]
with action of $\phi$ and $\Gamma$ inherited from $\Rcal_X[1/t]$. 
Finally we set
\[\Rcal_X(D)=\Rcal_X(D(a;(0)_\sigma))\otimes_{\Rcal_X}\Rcal_X(D((k_\sigma)_\sigma)).\]
More generally, let $\delta:K^\times \rightarrow \Gamma(X,\Ocal_X^\times)$ be a continuous character. Then there is a $(\phi,\Gamma)$-module $\Rcal_X(\delta)$ of rank $1$ associated to $\delta$ as follows, cf.~\cite[Construction 6.1.4]{KedlayaPX}. Write $\delta=\delta_1\delta_2$ with $\delta_1|_{\Ocal_K^\times}=1$ and such that $\delta_2$ extends to a character of $\Gcal_K$. Then we set
\[\Rcal_X(\delta)=\Rcal_X(D(\delta_1(\varpi),(0)_\sigma))\otimes_{\Rcal_X}\mathbf{D}^\dagger_{\rm rig}(\delta_2).\]
We write $\delta(D)$ for the character of $K^\times$ such that $\Rcal_X(\delta(D))=\Rcal_X(D)$. 

Further, we can check that, given $k_\sigma\in \Z$, the character $\delta(D((k_\sigma)_\sigma))$ associated with the $K$-filtered $\varphi$-module $D((k_\sigma)_\sigma)$ is given by $\delta((k_\sigma)_\sigma)$,i.e.~by the character $z\mapsto \prod_\sigma \sigma(z)^{k_\sigma}$. We write $\delta_\Wcal((k_\sigma)_\sigma)$ for the restriction of $\delta((k_\sigma)_\sigma)$ to $\Ocal_K^\times$. Finally we {have $\varepsilon\circ{\rm rec}_K=\delta(1,\dots, 1)|\delta(1,\dots, 1)|$.}
\begin{lem}\label{rk1cohom}
Let $\delta\in \Tcal(L)$ for a local field $L\supset K^{\rm norm}$. Then
\begin{align*}
H_{\phi,\Gamma}^0(\Rcal(\delta))\neq 0&\Longleftrightarrow \delta=\delta((-k_\sigma)_\sigma)\ \text{for some}\ (k_\sigma)_\sigma\in \prod\nolimits_{\sigma:K\hookrightarrow L}\Z_{\geq 0},\\
H_{\phi,\Gamma}^2(\Rcal(\delta))\neq 0&\Longleftrightarrow \delta=\varepsilon\cdot\delta((k_\sigma)_\sigma)\ \text{for some}\  (k_\sigma)_\sigma\in \prod\nolimits_{\sigma:K\hookrightarrow L}\Z_{\geq 0}.
\end{align*}
Especially $H_{\phi,\Gamma}^1(\Rcal(\delta))$ has $L$-dimension $[K:\Q_p]$ if and only if
\[\delta\notin \left\{\delta((-k_\sigma)_\sigma), \varepsilon\cdot \delta((k_\sigma)_\sigma)\mid (k_\sigma)\in \prod\nolimits_\sigma \Z_{\geq 0}\right\}. \]
\end{lem}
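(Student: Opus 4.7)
The plan is to establish the three equivalences in two independent steps (for $H^0$ and $H^2$) and then read off the $H^1$ statement from the Euler--Poincar\'e formula. The key inputs are the classification of rank one sub-$(\phi,\Gamma)$-modules of $\Rcal$, Tate--Herr duality for $(\phi,\Gamma)$-modules over the Robba ring, and the Euler--Poincar\'e formula — all available for arbitrary finite extensions $K/\Q_p$ thanks to Liu and Kedlaya--Pottharst--Xiao.

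For $H^0$: a nonzero element of $H^0_{\phi,\Gamma}(\Rcal(\delta))$ is the same as a $(\phi,\Gamma)$-equivariant injection $\Rcal \hookrightarrow \Rcal(\delta)$, equivalently an inclusion of rank one $(\phi,\Gamma)$-modules $\Rcal(\delta^{-1}) \hookrightarrow \Rcal$. When $\delta = \delta((-k_\sigma))$ with $k_\sigma \geq 0$, the submodule $\prod_\sigma t_\sigma^{k_\sigma}\Rcal \subset \Rcal$ is by definition of $\Rcal(D((k_\sigma)))$ isomorphic, as a $(\phi,\Gamma)$-module, to $\Rcal(\delta((k_\sigma))) = \Rcal(\delta^{-1})$, producing such an inclusion. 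The converse relies on the classification of rank one sub-$(\phi,\Gamma)$-modules of $\Rcal$: the divisor of such an inclusion on the open unit disc over $K_0$ must be non-negative and stable under $\phi$ and $\Gamma$, which forces it to be a non-negative integral combination of the divisors $\{t_\sigma = 0\}$ (whose $\phi$- and $\Gamma$-orbits are readily controlled via the Lubin--Tate periods). This yields the first equivalence.

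For $H^2$: apply Tate--Herr duality for $(\phi,\Gamma)$-modules over the Robba ring,
\[
H^2_{\phi,\Gamma}\bigl(\Rcal(\delta)\bigr) \cong H^0_{\phi,\Gamma}\bigl(\Rcal(\delta^{-1})\otimes \Rcal(\chi_{\mathrm{cyc}})\bigr)^{\vee},
\]
where $\chi_{\mathrm{cyc}}$ is the character of $K^\times$ corresponding under $\mathrm{rec}_K$ to the cyclotomic character. By the normalization recalled in the text, $\chi_{\mathrm{cyc}} = \varepsilon\circ\mathrm{rec}_K = \delta(1,\dots,1)\,|\delta(1,\dots,1)|$. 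Combining with the first part, $H^2(\Rcal(\delta))\neq 0$ iff $\delta^{-1}\cdot \varepsilon = \delta((-k_\sigma))$ for some $k_\sigma \geq 0$, i.e.~iff $\delta = \varepsilon\cdot \delta((k_\sigma))$, giving the second equivalence.

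For the ``Especially'' part: the Euler--Poincar\'e formula for a rank one $(\phi,\Gamma)$-module over $\Rcal$ gives
\[
\dim H^0_{\phi,\Gamma}\bigl(\Rcal(\delta)\bigr) - \dim H^1_{\phi,\Gamma}\bigl(\Rcal(\delta)\bigr) + \dim H^2_{\phi,\Gamma}\bigl(\Rcal(\delta)\bigr) = -[K:\Q_p].
\]
In each of the special cases above, Tate duality shows that the nonvanishing $H^0$ or $H^2$ is actually one-dimensional, and the two special sets for $\delta$ are disjoint since $\varepsilon$ has nontrivial norm character. Hence $\dim H^1 = [K:\Q_p]$ precisely when both $H^0$ and $H^2$ vanish, which is the displayed condition. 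The main obstacle is the converse in the $H^0$ computation, i.e.~ruling out exotic rank one sub-$(\phi,\Gamma)$-modules of $\Rcal$; once this classification is granted, duality and Euler--Poincar\'e formally do the rest.
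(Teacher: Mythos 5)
Your proposal is correct, and its three-step strategy — classify rank-one $(\phi,\Gamma)$-stable sub-objects of $\Rcal$ to compute $H^0$, use Tate--Herr duality (with the twist by $\chi_{\rm cyc}=\varepsilon\circ\mathrm{rec}_K$) to deduce $H^2$, and read off $H^1$ from the Euler--Poincar\'e formula — is exactly the route taken in the results the paper cites for this lemma (\cite[Proposition~6.2.8]{KedlayaPX} and \cite[Proposition~2.14]{Nakamura1}). The only genuinely nontrivial step you defer, the classification of $(\phi,\Gamma)$-stable divisors on the annulus over $K_0$ via the Lubin--Tate periods $t_\sigma$, is precisely what those references establish, so your sketch faithfully unpacks the citation rather than offering a different proof.
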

\begin{proof} This is \cite[Proposition 6.2.8]{KedlayaPX}, cf.~also \cite[Proposition 2.14]{Nakamura1}.
\end{proof}

\begin{notation}
\noindent (i) Let us write $\Tcal_{\rm reg}\subset \Tcal$ for the set of \emph{regular characters}, i.e the characters 
\[\delta\notin \left\{\delta((-k_\sigma)_\sigma), \varepsilon\cdot \delta((k_\sigma)_\sigma)\mid (k_\sigma)\in \prod\nolimits_\sigma \Z_{\geq 0}\right\}\]
\noindent (ii) Let $d>0$ be an integer. We define the set of \emph{regular parameters} $\Tcal_{\rm reg}^d\subset \Tcal^d$ to be the set of $(\delta_1,\dots,\delta_d)\in \Tcal^d$ such that $\delta_i/\delta_j\in \Tcal_{\rm reg}$ for $i\leq j$. Note that by construction $\Tcal_{\rm reg}^d\neq (\Tcal_{\rm reg})^d$.

\noindent (iii) A weight $\delta\in \Wcal(\bar\Q_p)$ is \emph{algebraic of weight} $(k_\sigma)_\sigma$ if $\delta=\delta_{\Wcal}((k_\sigma)_\sigma)$. 

\noindent (iv) We say that $\delta\in \Wcal(\bar\Q_p)$ is \emph{locally algebraic} of weight $(k_\sigma)_\sigma$ if $\delta\otimes\delta_\Wcal((-k_\sigma)_\sigma)$ becomes trivial after restricting to some open subgroup of $\Ocal_K^\times$. 

\noindent (v) An element ${\bf k}=(k_{\sigma, i})_\sigma\in \prod_\sigma \Z^d$ is called \emph{strongly dominant} if $k_{\sigma, 1}>k_{\sigma,2}>\dots>k_{\sigma, d}$ for all $\sigma$. 

\noindent (vi) Let ${\bf k}=(k_{\sigma, i})_\sigma\in \prod_\sigma\Z^d$.  We say that $(\delta_1,\dots, \delta_d)\in \Wcal^d(\bar\Q_p)$ is \emph{algebraic of weight} ${\bf k}$ if $\delta_i$ is algebraic of weight $(k_{\sigma,i})_\sigma$.  An element $\delta=(\delta_1,\dots, \delta_d)\in \Wcal^d$ is called \emph{locally algebraic of weight ${\bf k}$} if $\delta_i$ is locally algebraic of weight $(k_{\sigma, i})_\sigma$. 
 The set of weights that are locally algebraic of weight ${\bf k}$ is denoted by $\Wcal^d_{{\bf k},{\rm la}}\subset \Wcal^d$. 
\end{notation}

\subsection{The space of trianguline $(\phi,\Gamma)$-modules}
We extend the construction of a space of trianguline $(\phi,\Gamma)$-modules with regular parameters given in \cite{Chtrianguline} to our context.

Let $d$ be a positive integer and consider the functor $\Scal_d^\square$ that assigns to a rigid space $X$ the isomorphism classes of quadruples $(D,\Fil_\bullet(D),\delta, \nu)$, where $D$ is a $(\phi,\Gamma)$-module over $\Rcal_X$ and $\Fil_\bullet(D)$ is a filtration of $D$ by sub-$\Rcal_X$-modules that are stable under the action of $\phi$ and $\Gamma$ and that are locally on $X$ direct summands as $\Rcal_X$-modules. Further $\delta\in \Tcal_{\rm reg}^d(X)$ and $\nu=(\nu_1,\dots, \nu_d)$ is a collection of trivializations
\[\nu_i:\Fil_{i+1}(D)/\Fil_i(D)\stackrel{\cong}{\longrightarrow} \Rcal_X(\delta_i).\]
Similarly, we consider a variant of this functor parametrizing non-split extensions, cf.~\cite{finslope}, that is, the functor $\Scal_d^{\rm ns}$ that assigns to $X$ the quadruples $(D,\Fil_\bullet(D),\delta,\nu_d)$ where $D$ and $\Fil_\bullet(D)$ are as above and $\delta\in \Tcal_{\rm reg}^d$ such that locally on $X$ there exist short exact sequences
\[0\longrightarrow \Fil_i(D)\longrightarrow \Fil_{i+1}(D)\longrightarrow \Rcal_X(\delta_i)\longrightarrow 0\]
that are non split at every geometric point $x\in X$ as a sequence of $(\phi,\Gamma)$-modules. Finally $\nu_d$ is a trivialization 
\[\nu_d:\Fil_{d+1}(D)/\Fil_d(D)\stackrel{\cong}{\longrightarrow} \Rcal_X(\delta_d).\]
\begin{prop}\label{H1locallyfree}
Let $\delta=(\delta_1,\dots, \delta_d)\in \big(\Tcal_{\rm reg}\big)^d(X)$ for some rigid space $X$ and let $D$ be a successive extension of the $\Rcal_X(\delta_i)$. Then $H_{\phi,\Gamma}^1(D)$ is a locally free $\Ocal_X$-module of rank $d[K:\Q_p]$. 
\end{prop}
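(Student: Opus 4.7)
The plan is to induct on $d$, establishing simultaneously that $H^0_{\phi,\Gamma}(D)$ and $H^2_{\phi,\Gamma}(D)$ vanish and that $H^1_{\phi,\Gamma}(D)$ is locally free of rank $d[K:\Q_p]$. The only non-formal input beyond Lemma \ref{rk1cohom} is the fact that, locally on $X$, the Koszul-type complex $C^\bullet_{\phi,\Gamma}(D)$ is a perfect complex of $\Ocal_X$-modules whose formation commutes with arbitrary base change, a standard strengthening of the coherence assertion of \cite[Theorem 4.4.5]{KedlayaPX}.

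For the base case $d=1$ we have $D=\Rcal_X(\delta_1)$ with $\delta_1\in \Tcal_{\rm reg}(X)$. At each geometric point $x\in X$ the character $\delta_{1,x}$ is regular, so Lemma \ref{rk1cohom} gives $H^0_{\phi,\Gamma}(D_x)=H^2_{\phi,\Gamma}(D_x)=0$ and $\dim_{k(x)} H^1_{\phi,\Gamma}(D_x)=[K:\Q_p]$. Cohomology and base change applied to the perfect complex $C^\bullet_{\phi,\Gamma}(D)$ then promote these pointwise equalities to the global statement that $H^0$ and $H^2$ vanish as $\Ocal_X$-modules and that $H^1$ is locally free of the claimed rank.

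For the inductive step, set $D'=\Fil_d(D)$, so that $D'$ is a successive extension of $\Rcal_X(\delta_1),\dots,\Rcal_X(\delta_{d-1})$ and $D/D'\cong \Rcal_X(\delta_d)$, giving a short exact sequence
\[0\longrightarrow D'\longrightarrow D\longrightarrow \Rcal_X(\delta_d)\longrightarrow 0\]
of $(\phi,\Gamma)$-modules over $\Rcal_X$. Applying the inductive hypothesis to $D'$ and the base case to $\Rcal_X(\delta_d)$, the $H^0$ and $H^2$ of both outer terms vanish. The long exact sequence in $H^*_{\phi,\Gamma}$ therefore forces $H^0_{\phi,\Gamma}(D)=H^2_{\phi,\Gamma}(D)=0$ and collapses to a short exact sequence
\[0\longrightarrow H^1_{\phi,\Gamma}(D')\longrightarrow H^1_{\phi,\Gamma}(D)\longrightarrow H^1_{\phi,\Gamma}(\Rcal_X(\delta_d))\longrightarrow 0\]
of $\Ocal_X$-modules whose outer terms are locally free of ranks $(d-1)[K:\Q_p]$ and $[K:\Q_p]$. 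Local freeness of the middle term of rank $d[K:\Q_p]$ follows. The one substantive step is the pointwise-to-global passage in the base case, which rests on the perfect-complex formalism of \cite{KedlayaPX}; the rest of the argument is pure homological bookkeeping on the filtration.
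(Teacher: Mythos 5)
Your argument is essentially the paper's: induction on $d$, peeling off a rank-one piece and combining Lemma \ref{rk1cohom} with the perfect-complex/cohomology-and-base-change consequences of \cite[Theorem 4.4.5]{KedlayaPX}. The only cosmetic differences are that you split off the top graded piece $\Rcal_X(\delta_d)$ as quotient rather than the bottom piece $\Rcal_X(\delta_1)$ as sub, and you carry the vanishing of $H^0$ and $H^2$ through the induction explicitly, where the paper instead deduces $H^0_{\phi,\Gamma}(D')=H^2_{\phi,\Gamma}(D')=0$ from the Euler characteristic formula once local freeness of $H^1_{\phi,\Gamma}(D')$ is known.
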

\begin{proof}
It follows from \cite[Theorem 4.4.5]{KedlayaPX} that the cohomology is a coherent sheaf and it suffices to compute its rank at all points.
We proceed by induction on $d$. The rank $1$ case is settled by Lemma $\ref{rk1cohom}$. Consider the short exact sequence
\begin{equation}\label{extn}
0\longrightarrow \Rcal_X(\delta_1)\longrightarrow D\longrightarrow D'\longrightarrow 0.
\end{equation}
Then, by induction hypothesis, $H_{\phi,\Gamma}^1(D')$ is locally free of rank $(d-1)[K:\Q_p]$ and hence the Euler-characteristic formula \cite[Theorem 4.4.5 (2)]{KedlayaPX} implies that $H_{\phi,\Gamma}^0(D')=H_{\phi,\Gamma}^2(D')=0$. The claim now follows from the long exact sequence associated to $(\ref{extn})$ and the fact that $H_{\phi,\Gamma}^1(\Rcal_X(\delta_1))$ is locally free of rank $[K:\Q_p]$ and $H^2_{\phi,\Gamma}(\Rcal_X(\delta_1))=0$ by Lemma $\ref{rk1cohom}$. 
\end{proof}
\begin{theo}\label{constructionSd}
\noindent {\rm (i)} The functors $\Scal_d^\square$ and $\Scal_d^{\rm ns}$ are representable by rigid spaces. \\
\noindent {\rm (ii)} The map $\Scal_d^\square\rightarrow \Tcal_{\rm reg}^d$ is smooth of relative dimension $\tfrac{d(d-1)}{2}[K:\Q_p]$.\\
\noindent {\rm (iii)} The map $\Scal_d^{\rm ns}\rightarrow \Tcal_{\rm reg} ^d$ is smooth and proper and \[\dim\Scal_d^{\rm ns}=1+[K:\Q_p]\big(\tfrac{d(d+1)}{2}\big)\]
\end{theo}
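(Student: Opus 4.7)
My plan is to construct $\Scal_d^\square$ and $\Scal_d^{\rm ns}$ by parallel inductive towers over $\Tcal_{\rm reg}^d$: the former as a tower of vector bundles and the latter as a tower of projective bundles (equivalently, as the quotient of the non-split locus of $\Scal_d^\square$ by the torus that rescales the intermediate trivializations). The technical engine at each inductive step is Proposition \ref{H1locallyfree}, which ensures that the relevant $(\phi,\Gamma)$-cohomology is locally free of the expected rank.

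\emph{Parts (i) and (ii).} I would induct on $d$. The base case gives $\Scal_1^\square=\Tcal_{\rm reg}$, since the single trivialization $\nu_1$ rigidifies the data. For the step, put $Y=\Scal_{d-1}^\square\times_{\Tcal_{\rm reg}^{d-1}}\Tcal_{\rm reg}^d$ with its universal object $(D_{d-1},\Fil_\bullet,\nu_1,\dots,\nu_{d-1})$ and additional parameter $\delta_d$. The datum of a new filtration stage $\Fil_d$ together with a trivialization $\nu_d$ of the quotient is the same as a class in $H^1_{\phi,\Gamma}(D_{d-1}(\delta_d^{-1}))$. The twisted module $D_{d-1}(\delta_d^{-1})$ is a successive extension of $\Rcal_Y(\delta_i/\delta_d)$ for $i=1,\dots,d-1$, and by the definition of $\Tcal_{\rm reg}^d$ each $\delta_i/\delta_d$ is regular; so Proposition \ref{H1locallyfree} gives local freeness of rank $(d-1)[K:\Q_p]$. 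Hence $\Scal_d^\square$ is representable as the total space of this vector bundle over $Y$ and is smooth. The relative dimensions telescope to $\sum_{i=1}^{d-1}i[K:\Q_p]=\tfrac{d(d-1)}{2}[K:\Q_p]$.

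\emph{Part (iii).} Let $U\subset\Scal_d^\square$ be the Zariski open subspace on which every iterated extension is fiberwise non-split. The forgetful map $U\to\Scal_d^{\rm ns}$ that discards $\nu_1,\dots,\nu_{d-1}$ is a torsor under the free action of $\Gbb_m^{d-1}$ rescaling these trivializations, an action that scales each successive extension class in the inductive presentation of $U$. At every stage this quotients a vector bundle minus its zero section by a $\Gbb_m$ acting by scaling, producing a projective bundle, and iteratively $\Scal_d^{\rm ns}$ is therefore a tower of projective bundles $\Pbb(H^1_{\phi,\Gamma}(\Fil_{i-1}(\delta_i^{-1})))$ of relative dimension $(i-1)[K:\Q_p]-1$ over $\Tcal_{\rm reg}^d$. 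Because projective bundles are smooth and proper, so is $\Scal_d^{\rm ns}\to\Tcal_{\rm reg}^d$. Subtracting $d-1$ from the dimension of $\Scal_d^\square$ yields
\[\dim\Scal_d^{\rm ns}=d(1+[K:\Q_p])+\tfrac{d(d-1)}{2}[K:\Q_p]-(d-1)=1+\tfrac{d(d+1)}{2}[K:\Q_p].\]

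\emph{Main obstacle.} The substantive analytic input is already supplied by Proposition \ref{H1locallyfree}; what remains is careful bookkeeping. The subtle point I would verify is the freeness of the $\Gbb_m^{d-1}$-action on the non-split locus and its compatibility with the scalings of the iterated extension classes. This relies on the regularity conditions built into $\Tcal_{\rm reg}^d$, together with the vanishing of $H^0$'s provided by Lemma \ref{rk1cohom}, which reduces the automorphism groups of the intermediate $\Fil_i$ to scalars; once this is in hand the quotient is represented by the expected rigid space and the projective-bundle tower encodes exactly the functor $\Scal_d^{\rm ns}$.
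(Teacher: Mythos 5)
Your proposal follows essentially the same inductive scheme as the paper: build $\Scal_d^\square$ as a tower of geometric vector bundles over $\Tcal_{\rm reg}^d$ via Proposition~\ref{H1locallyfree}, and build $\Scal_d^{\rm ns}$ as the analogous tower of projective bundles (equivalently as the quotient of the fiberwise-non-split locus by the torus rescaling the intermediate trivializations). The one genuine difference worth noting: you add a new \emph{top} graded piece $\Rcal(\delta_d)$ at each inductive step, so the relevant extension group is $H^1_{\phi,\Gamma}(D_{d-1}(\delta_d^{-1}))$, whose graded pieces $\Rcal(\delta_i/\delta_d)$ with $i<d$ lie directly in $\Tcal_{\rm reg}$ by the definition of $\Tcal_{\rm reg}^d$, so Proposition~\ref{H1locallyfree} applies with no detour. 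The paper instead adds a new \emph{bottom} layer $\Rcal(\delta_1)$ and therefore needs one extra step — an appeal to Tate duality (Theorem 4.4.5 of Kedlaya--Pottharst--Xiao) — to deduce local freeness of $\mathscr{E}xt^1(\Dcal_{d-1},\Rcal(\delta_1))$ from that of $\mathscr{E}xt^1(\Rcal(\delta_1),\Dcal_{d-1})$. Your orientation is slightly more economical here. For part (iii) the paper runs the induction directly over $\Scal_{d-1}^{\rm ns}$ (a single $\Pbb(\Mcal_V^\vee)$ per step, descending the universal extension along the $\Gbb_m$-torsor $\tilde\Scal_d^{\rm ns}\setminus V\to\Scal_d^{\rm ns}$), rather than presenting $\Scal_d^{\rm ns}$ as a quotient of $U\subset\Scal_d^\square$ by $\Gbb_m^{d-1}$; if you take the quotient route you should be careful that the $\Gbb_m^{d-1}$-action does not literally act diagonally one $\Gbb_m$ per stage on the coordinates $(e_2,\dots,e_d)$ — rescaling $\nu_j$ also perturbs the entries of later extension classes — so the paper's stage-by-stage projective-bundle construction over $\Scal_{d-1}^{\rm ns}$ is the cleaner way to realize the representability. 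Your closing remark that the $H^0$-vanishing from Lemma~\ref{rk1cohom} forces automorphisms of the $\Fil_i$ to be scalars is exactly the right observation underpinning the freeness of the torus action and the descent of the universal object.
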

\begin{proof}
The proof is the same as the proof of \cite[Theorem 3.3]{Chtrianguline} resp. \cite[Proposition 2.3]{finslope}. For the convenience of the reader we give a short sketch. The case $d=1$ is settled by $\Scal_1^\square=\Scal_1^{\rm ns}=\Tcal$. Now assume that $\Scal_{d-1}^\square$ and $\Scal_{d-1}^{\rm ns}$ are constructed with universal objects $\Dcal^\square_{d-1}$ resp. $\Dcal_{d-1}^{\rm ns}$. Let $U\subset \Tcal\times \Scal_{d-1}^\square$ resp. $V\subset \Scal_{d-1}^{\rm ns}\times \Tcal$ be the preimage of $\Tcal_{\rm reg}^d\subset \Tcal\times \Tcal_{\rm reg}^{d-1}$ under the canonical projection. Then Proposition $\ref{H1locallyfree}$ implies that 
\[\mathscr{E}xt^1_{\Rcal_U}(\Rcal_U(\delta_1),\Dcal_{d-1}^\square)=H^1_{\phi,\Gamma}(\Dcal_{d-1}^\square(\delta_1^{-1}))\]
resp.
\[\mathscr{E}xt^1_{\Rcal_V}(\Rcal_V(\delta_1),\Dcal_{d-1}^{\rm ns})=H^1_{\phi,\Gamma}(\Dcal_{d-1}^{\rm ns}(\delta_1^{-1}))\]
are vector bundles of rank $(d-1)[K:\Q_p]$. As the Tate-duality is a perfect pairing \cite[Theorem 4.4.5]{KedlayaPX} we find that also
\[\Mcal_U=\mathscr{E}xt^1_{\Rcal_U}(\Dcal_{d-1}^\square,\Rcal_U(\delta_1))\] 
resp. 
\[\Mcal_V=\mathscr{E}xt^1_{\Rcal_V}(\Dcal_{d-1}^{\rm ns},\Rcal_V(\delta_1))\] 
are vector bundles of rank $(d-1)[K:\Q_p]$. Now $\Scal_d^\square=\underline\Spec_U(\Sym^\bullet \Mcal_U^\vee)$ is the geometric vector bundle over $U$ associated to $\Mcal_U$ while $\Scal_d^{\rm ns}=\mathbb{P}_V(\Mcal^\vee_V)$ is the projective bundle associated to $\Mcal_V$. Here $\underline{\Spec}$ is the relative spectrum in the sense of \cite[2.2]{Conrad} and given a vector bundle $\Ecal$ the projective bundle $\Pbb(\Ecal)=\underline{\rm Proj}(\Sym^\bullet \Ecal)$ is the relative Proj in the sense of \cite[2.3]{Conrad}.

The universal object $\Dcal_d^\square$ then is the universal extension 
\[0\longrightarrow \Rcal(\delta_1)\longrightarrow \Dcal_d^\square\longrightarrow \Dcal_{d-1}^\square\longrightarrow 0\] over $\Scal_d^\square$. In the non-split context consider the geometric vector bundle $\tilde\Scal_d^{\rm ns}=\underline\Spec_V(\Sym^\bullet\Mcal_V^\vee)$ over $V$ associated to $\Mcal_V$. Then there is a universal extension
\[0\longrightarrow \Rcal(\delta_1)\longrightarrow \tilde\Dcal_d^{\rm ns}\longrightarrow \Dcal_{d-1}^{\rm ns}\longrightarrow 0\]  
over $\tilde\Scal_d^{\rm ns}$. Consider the open subspace $\tilde\Scal_d^{\rm ns}\backslash V\subset \tilde\Scal_d^{\rm ns}$ where the image of the zero section $0:V\hookrightarrow \tilde\Scal_d^{\rm ns}$ is removed. This space carries a natural action of $\Gbb_m$ and this action lifts to an action on the restriction of $\tilde\Dcal_d^{\rm ns}$ to $\tilde\Scal_d^{\rm ns}\backslash V$ by acting on $\Rcal(\delta_1)$. Hence $\tilde\Dcal_d^{\rm ns}$ descends to a $(\phi,\Gamma)$-module $\Dcal_d^{\rm ns}$ over $\Pbb(\Mcal^\vee_V)=(\tilde\Scal_d^{\rm ns}\backslash V)/\Gbb_m$.

The computation of the dimension follows from the construction as well as the fact that $\Scal_d^\square$ is smooth over $\Tcal_{\rm reg}^d$ and $\Scal_d^{\rm ns}$ is smooth and proper over $\Tcal_{\rm reg}^d$. 
\end{proof}
Let $r\in p^\Q\cap[0,1)$ and consider the ring $\Rcal^r=\Rcal_{\Q_p}^r$. If $n\gg 0$, then there is a morphism $\Rcal^r\rightarrow K_n\dbl t\dbr$ where the ring $K_n\dbl t\dbr$ is viewed as the complete local ring at the point of $\Ubb_{r, K_0}$ corresponding to (the ${\rm Gal}(\bar\Q_p/K_0)$-orbit of) $1-\epsilon_n$. If $D_r$ is a $(\phi,\Gamma)$-module defined over $\Rcal_L^r$ for some $p$-adic field $L$ and some $r\in p^\Q\cap[0,1)$ and if $D=D_r\otimes_{\Rcal_L^r}\Rcal_L$, then we define 
\begin{align*}
D_{\rm dR}(D)&=(K_\infty\otimes_{K_n} K_n\dpl t\dpr\otimes_{\Rcal_{\Q_p}^r}D_r)^\Gamma\\
\Fil^iD_{\rm dR}(D)&=(K_\infty\otimes_{K_n} t^iK_n\dbl t\dbr\otimes_{\Rcal_{\Q_p}^r}D_r)^\Gamma.
\end{align*}
If $L$ contains $K^{\rm norm}$, then $D_{\rm dR}(D)$ splits up into a product $D_{\rm dR}(D)=\prod_\sigma D_{{\rm dR},\sigma}(D)$ and $\Fil^iD_{\rm dR}(D)=\prod_\sigma \Fil_\sigma^i D_{\rm dR}(D)$ splits up into filtrations $\Fil^i_\sigma D_{\rm dR}(D)$ of the $D_{{\rm dR}, \sigma}(D)$.\\
As usual we can extend the notions of being crystalline or de Rham to $(\phi,\Gamma)$-modules.
\begin{defn} Let $L$ be a finite extension of $\Q_p$ and let $D$ be a $(\phi,\Gamma)$-module of rank $d$ over $\Rcal_L=\Rcal_{L,K}$. Assume that $D=D_r\otimes_{\Rcal_L^r}\Rcal_L$ for some $(\phi,\Gamma)$-module $D_r$ defined over $\Rcal_L^r$ and some $r<1$.\\ 
\noindent (i) The $(\phi,\Gamma)$-module $D$ is called \emph{de Rham} if $D_{\rm dR}(D)$ is a free $L\otimes_{\Q_p}K$-module of rank $d$.\\
\noindent (ii) The module $D$ is called \emph{crystalline} if $D_{\rm cris}(D)=D[1/t]^\Gamma$ is free of rank $d$ over $L\otimes_{\Q_p}K_0$. \\
\noindent (iii) The module $D$ is called \emph{crystabelline}  if $D\otimes_{\Rcal_{L,K}}\Rcal_{L,K'}$ is crystalline for some abelian extension $K'$ of $K$. 
\end{defn}
The following proposition is the generalization of \cite[Proposition 2.3.4]{BellaicheChenevier}\footnote{Note that Bellaiche and Chenevier use a different sign convention.} to our context and its proof is essentially the same as in the case $K=\Q_p$.
\begin{prop}\label{deRham}
Let $L$ be a finite extension of $\Q_p$ containing $K^{\rm norm}$ and let $D$ be a $(\phi,\Gamma)$-module of rank $d$ over $\Rcal_L$ that is a successive extension of rank $1$ objects $\Rcal_L(\delta_i)$. Assume that  $(\delta_1|_{\Ocal_K^\times},\dots, \delta_d|_{\Ocal_K^\times})$ is locally algebraic of weight ${\bf k}=(k_{\sigma, i})$  for some strongly dominant weight ${\bf k}$. Then $D$ is de Rham with labeled Hodge-Tate weights ${\bf k}$.
\end{prop}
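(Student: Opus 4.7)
The plan is to prove the proposition by induction on the rank $d$.

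For the base case $d=1$, we have $D=\Rcal_L(\delta)$ with $\delta|_{\Ocal_K^\times}$ locally algebraic of weight $(k_\sigma)_\sigma$. I would factor $\delta=\delta_{\rm sm}\cdot\delta((k_\sigma)_\sigma)$, where $\delta_{\rm sm}|_{\Ocal_K^\times}$ is trivial on some open subgroup of $\Ocal_K^\times$. Via the reciprocity map ${\rm rec}_K$, the character $\delta_{\rm sm}$ corresponds to a continuous Galois character whose restriction to inertia has finite image, hence a crystabelline (in particular de Rham) character with zero Hodge--Tate weights. By the very construction $\Rcal_L(\delta((k_\sigma)_\sigma))=\prod_\sigma t_\sigma^{k_\sigma}\Rcal_L$, tensoring shifts the filtration and yields $\Rcal_L(\delta)$ as a de Rham $(\phi,\Gamma)$-module with labeled Hodge--Tate weights $(k_\sigma)_\sigma$.

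For the inductive step, let $F_{d-1}\subset D$ be the rank $(d-1)$ sub-object obtained as the successive extension of $\Rcal_L(\delta_1),\ldots,\Rcal_L(\delta_{d-1})$. By the induction hypothesis applied to $F_{d-1}$ (whose weights $(k_{\sigma,i})_{i\leq d-1}$ remain strongly dominant) and the base case applied to $\Rcal_L(\delta_d)$, both are de Rham with the expected labeled Hodge--Tate weights. Left exactness of $D_{\rm dR}$ applied to
\[0\longrightarrow F_{d-1}\longrightarrow D\longrightarrow \Rcal_L(\delta_d)\longrightarrow 0\]
gives an injection $D_{\rm dR}(F_{d-1})\hookrightarrow D_{\rm dR}(D)$, and it remains to show that the induced map $D_{\rm dR}(D)\to D_{\rm dR}(\Rcal_L(\delta_d))$ is surjective. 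A dévissage along the filtration of $F_{d-1}$ reduces this to the rank-two assertion: for each $j<d$, every extension
\[0\longrightarrow\Rcal_L(\delta_j)\longrightarrow E\longrightarrow\Rcal_L(\delta_d)\longrightarrow 0\]
is de Rham. The extension class lies in $H^1_{\phi,\Gamma}(\Rcal_L(\delta_j\delta_d^{-1}))$, and the character $\delta_j\delta_d^{-1}$ has labeled Hodge--Tate weights $(k_{\sigma,j}-k_{\sigma,d})_\sigma$, all strictly positive by strong dominance.

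The main obstacle is thus the following rank-one assertion which completes the induction: for every character $\eta\in\Tcal(L)$ with all labeled Hodge--Tate weights strictly positive, every class in $H^1_{\phi,\Gamma}(\Rcal_L(\eta))$ corresponds to a de Rham extension of $\Rcal_L$ by $\Rcal_L(\eta)$. For $K=\Q_p$ this is the content of \cite[Proposition 2.3.4]{BellaicheChenevier}; for general $K$ the strategy is the same in spirit: the positivity of each $k_\sigma$ forces the Bloch--Kato subspace $H^1_g(\Rcal_L(\eta))\subset H^1_{\phi,\Gamma}(\Rcal_L(\eta))$ of de Rham extensions to exhaust the whole $H^1_{\phi,\Gamma}$. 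One verifies this by an explicit dimension count using Lemma~\ref{rk1cohom} together with surjectivity of the exponential map $D_{\rm dR}(\Rcal_L(\eta))/\Fil^0\to H^1_{\phi,\Gamma}(\Rcal_L(\eta))$ labeled by embeddings $\sigma:K\hookrightarrow L$. The technical work is to carry out this computation using the Lubin--Tate periods $t_\sigma$ in the labeled setting over a general base field $K$, which essentially follows the same lines as the case $K=\Q_p$ by working one embedding at a time.
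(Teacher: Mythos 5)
Your plan shares the paper's skeleton — induction on $d$, handle rank one by twisting away the locally algebraic part, then run a d\'evissage along the filtration — but there is a genuine gap in the inductive step, and it is exactly the point the paper has to work for.

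You reduce the surjectivity of $D_{\rm dR}(D)\to D_{\rm dR}(\Rcal_L(\delta_d))$ to the rank-two assertion that every extension of $\Rcal_L(\delta_d)$ by $\Rcal_L(\delta_j)$ is de Rham, i.e.\ to the vanishing of the map
\[
\mu_j:\,H^1_{\phi,\Gamma}\big(\Rcal_L(\delta_j\delta_d^{-1})\big)\longrightarrow H^1\big(\Gamma,\,R\otimes\Rcal_L(\delta_j\delta_d^{-1})\big),
\]
where $R=\bigcup_n L\otimes_{\Q_p}K_n\dbl t\dbr$ plays the role of $B_{\rm dR}^+$ (equivalently $H^1_g=H^1_{\phi,\Gamma}$ for each graded piece). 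That statement is true — since the weights $k_{\sigma,j}-k_{\sigma,d}$ are positive one has $\Fil^0 D_{\rm dR}=0$, so the dual exponential vanishes — but it is strictly weaker than what the d\'evissage requires. The obstruction for the $d$-fold extension lives in $H^1(\Gamma,R\otimes F_{d-1}(\delta_d^{-1}))$, and one has to show that class is $0$. Running the five-lemma along the filtration of $F_{d-1}$, knowing $\mu_j=0$ for each graded piece only tells you the obstruction dies in $H^1(\Gamma,R\otimes\gr_j(\delta_d^{-1}))$, hence lifts to $H^1(\Gamma,R\otimes F_{j-1}(\delta_d^{-1}))$; that lift has no reason to come from $H^1_{\phi,\Gamma}$, so $\mu_{\gr}=0$ gives no control over it, and the induction stalls. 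What the argument actually needs is the vanishing of the \emph{ambient} groups $H^1(\Gamma,R\otimes\Rcal_L(\eta))$ for $\eta$ of everywhere positive weight, not just the vanishing of the image of $H^1_{\phi,\Gamma}$.

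The paper proves precisely this stronger statement, and it is the technical heart of the proof: for $p\neq2$ and $\Gamma_0=\langle\gamma\rangle$ one shows $(\gamma-1)$ is surjective on $\bigl(\prod_\sigma t_\sigma^{i_\sigma}\bigr)R$ (twisted by a smooth character) whenever all $i_\sigma>0$, so that
\[
H^1\Bigl(\Gamma_0,\bigl(\prod\nolimits_\sigma t_\sigma^{i_\sigma}\bigr)R\Bigr)=\bigl(\prod\nolimits_\sigma t_\sigma^{i_\sigma}\bigr)R\big/(\gamma-1)\bigl(\prod\nolimits_\sigma t_\sigma^{i_\sigma}\bigr)R=0.
\]
Once this is available, the d\'evissage along the filtration of $F_{d-1}$ does go through. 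You gesture at "an explicit dimension count using Lemma~\ref{rk1cohom} together with surjectivity of the exponential map," but the exponential map is the wrong tool here and not clearly surjective over a general $K$; the direct invertibility computation in $R$ is what is needed, and it is not contained in the $H^1_g=H^1_{\phi,\Gamma}$ statement you cite from \cite{BellaicheChenevier}. (Also note that \cite[Prop.~2.3.4]{BellaicheChenevier} is the $K=\Q_p$ version of this whole proposition, not of the rank-one assertion alone.) The rank-one base case is fine in substance, but one should first twist by an unramified character so that the remaining piece genuinely extends to a character of $\Gcal_K^{\rm ab}$; as written, $\delta_{\rm sm}$ need not correspond to a Galois character because $\delta_{\rm sm}(\varpi)$ can have nonzero valuation.
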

\begin{proof}
Write $R=\bigcup_n (L\otimes_{\Q_p}K_n\dbl t\dbr)$ for the moment. We proceed by induction on $d$. The case $d=1$ easily follows {from} the fact that we may twist by characters $\delta$ such that $\delta|_{\Ocal_K^\times}=1$ and the fact that the claim is true for characters of $\Gcal_K^{\rm ab}=\hat\Z\times\Ocal_K^\times$ by the definition of locally algebraic weights. 

For simplicity we only treat the case $p\neq 2$. In this case the group $\Gamma$ is pro-cyclic. In the case $p=2$ one concludes similarly after taking invariants under the $2$-power torsion subgroup $\Delta$ of $\Gamma$. 

Let $\gamma\in \Gamma$ be a topological generator and let $\Gamma_0=\langle \gamma \rangle\subset \Gamma$. We will prove by induction on $1\leq j\leq d$ that for $(\prod_\sigma t^{k_{\sigma,j}}R\otimes_{\Rcal_{\Q_p}^r}\Fil_j(D)_r)^{\Gamma_0}\neq0$ for big enough $r$. Suppose we have the result for $j\leq d-1$. One deduces from the short exact sequence 
\begin{align*}
0&\rightarrow \prod\nolimits_\sigma\Fil_\sigma^{-k_{\sigma,d}}D_{\rm dR}(\Fil_{d-1}(D))\rightarrow \prod\nolimits_\sigma\Fil_\sigma^{-k_\sigma}D_{\rm dR}(D) \rightarrow \prod\nolimits_\sigma\Fil_\sigma^{-k_\sigma}\Rcal_L(\delta_d) \\ &\rightarrow H^1\big(\Gamma_0,\Fil_{d-1}(D)_r\otimes_{\Rcal_{\Q_p}^r}\big(\prod\nolimits_\sigma t_\sigma^{-k_{\sigma,d}}\big)R\big).
\end{align*}
that it suffices to show that 
\[H^1\big(\Gamma_0,\Fil_{d-1}(D)_r\otimes_{\Rcal_{\Q_p}^r}\big(\prod\nolimits_\sigma t_\sigma^{-k_{\sigma,d}}\big)R\big)=0.\]
To do so we are reduced to compute the first cohomology of $(\prod\nolimits_\sigma t_\sigma^{-k_\sigma})R\otimes_L R(\delta_i)$ for $i\leq d-1$. However, this cohomology vanishes, as $\prod_\sigma {t_\sigma}^{-k_{\sigma,d}}R\otimes\delta_j\simeq\prod_\sigma {t_\sigma}^{-k_{\sigma,d}+k_{\sigma,j}}R\otimes R(\delta)$ with $\delta$ a finite order character, and $-k_{\sigma,d}+k_{\sigma,j}>0$ for all $\sigma$ and hence
\[H^1\left(\Gamma_0, \big(\prod\nolimits_\sigma t_\sigma^{i_\sigma}\big)R\right)=\big(\prod\nolimits_\sigma t_\sigma^{i_\sigma}\big)R\left/ (\gamma-1)\big(\prod\nolimits_\sigma t_\sigma^{i_\sigma}\big)R\right.=0\]
if $i_\sigma>0$ for all embeddings $\sigma$. It follows that $D$ has to be de Rham.
\end{proof}
Let $\omega_d^{\square}:\Scal_d^\square\rightarrow \Wcal^d$ resp.~$\omega_d:\Scal_d^{\rm ns}\rightarrow \Wcal^d$ denote the projection to the weight space. 
\begin{cor}\label{Zopeninfib}
\noindent {\rm (i)} Let $w\in \Wcal_{\rm alg}^d$ be a strongly dominant algebraic weight. Then there is a non-empty Zariski-open subset $Z_{\rm cris}({w})\subset \omega_d^{-1}({w})$ such that all points of $Z_{\rm cris}({w})$ are crystalline $(\phi,\Gamma)$-modules.\\
\noindent {\rm (ii)} Let ${\bf k}\in \prod_\sigma \Z^d$ be strongly dominant and let $w\in \Wcal_{{{\bf k}}, \rm la}^d$ be a locally algebraic weight. Then there is a non-empty Zariski-open subset $Z_{{\rm pcris}}(w)\subset \omega_d^{-1}(w)$ 
such that all points of $Z_{\rm pcris}(w)$ are crystabelline.
\end{cor}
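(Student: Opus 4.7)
By Proposition~\ref{deRham} every point of the fiber $\omega_d^{-1}(w)$ is already de Rham of labeled Hodge-Tate weights $\mathbf{k}$, so the task reduces to carving out a Zariski-open subset on which the monodromy vanishes (for (i)) or vanishes after an abelian base change (for (ii)). I first reduce (ii) to (i): since $w$ is locally algebraic of weight $\mathbf{k}$, there is a finite-index open subgroup $H\subset\Ocal_K^\times$ on which each $\delta_i\cdot\delta_\Wcal((-k_{\sigma,i})_\sigma)$ is trivial, corresponding via local class field theory to a finite abelian extension $K'/K$. Restriction of scalars on $\Rcal_X$ induces a rigid-analytic morphism from $\omega_d^{-1}(w)$ to the analogous fiber for $K'$ over an algebraic weight $w'$, and by definition a $(\phi,\Gamma)$-module over $K$ is crystabelline iff its image under this map is crystalline over $K'$; pulling back the open subset $Z_{\rm cris}(w')$ supplied by (i) then yields the desired $Z_{\rm pcris}(w)$.

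For (i), set $\lambda_i:=\delta_i(\varpi)\cdot\prod_\sigma\sigma(\varpi)^{-k_{\sigma,i}}$. Each rank-$1$ piece $\Rcal(\delta_i)$ is the twist of the algebraic character of weight $(k_{\sigma,i})_\sigma$ by the unramified character with value $\lambda_i$ at $\varpi$, hence is crystalline. Consequently every $D$ in $\omega_d^{-1}(w)$ is a successive extension of crystalline rank-$1$ objects, so inertia acts trivially on $D_{\rm pst}(D)$; that is, $D$ is semi-stable and $D_{\rm st}(D)$ is a $(\phi,N)$-module of full rank $d$ whose Frobenius eigenvalues are controlled by the $\lambda_i$. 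Since a semi-stable $(\phi,\Gamma)$-module is crystalline iff $N=0$, and since $N\varphi=p\varphi N$ forces $N=0$ as soon as no two Frobenius eigenvalues on $D_{\rm st}(D)$ stand in the ratio $p^{[K_0:\Q_p]}$, I define $Z_{\rm cris}(w)\subset\omega_d^{-1}(w)$ by this non-vanishing condition on the $\lambda_i$. This is an explicit Zariski-open subset on which every point is crystalline.

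To see $Z_{\rm cris}(w)$ is non-empty, I construct via Colmez-Fontaine a crystalline Galois representation of Hodge-Tate weights $\mathbf{k}$ whose Frobenius eigenvalues satisfy the genericity conditions imposed by $Z_{\rm cris}(w)$ (the genericity condition cuts out a non-empty set of admissible Frobenius data thanks to the strong dominance of $\mathbf{k}$), and equip it with a non-critical refinement compatible with the ordering dictated by $w$. This produces an explicit point of $\omega_d^{-1}(w)$ lying in $Z_{\rm cris}(w)$, establishing non-emptiness; non-emptiness of $Z_{\rm pcris}(w)$ then follows from the reduction step.

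The most delicate aspect of this plan is pinning down the precise dictionary between the triangulation parameters $\delta_i(\varpi)$ and the Frobenius eigenvalues on $D_{\rm st}(D)$, especially the interaction with the powers of Frobenius when $K_0\neq\Q_p$ and the dependence on the choice of uniformizer $\varpi$. Once these normalizations are in place, both the openness of $Z_{\rm cris}(w)$ and the matching of parameters in the non-emptiness step reduce to routine verifications.
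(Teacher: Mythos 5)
Your proposal is correct and follows essentially the same route as the paper: you define $Z_{\rm cris}(w)$ by the genericity condition that the ratios $\lambda_i/\lambda_j$ of the twisted Frobenius parameters avoid $p^{\pm[K_0:\Q_p]}$, use Proposition~\ref{deRham} to get de Rham, the successive-extension-of-crystalline-pieces observation to get semi-stable, and the relation $N\varphi = p\varphi N$ to kill monodromy on that locus; part (ii) is handled identically by passing to the abelian extension $K'$ cut out by the smooth parts of the $\delta_i$. The only real divergence is your non-emptiness argument via Colmez--Fontaine and a non-critical refinement, which is valid but heavier than needed: non-emptiness is immediate because the defining inequalities on $\lambda_i/\lambda_j$ impose a non-trivial condition only on the $\Gbb_m^d$-factor (the values $\delta_i(\varpi)$) of the fiber $\omega_d^{-1}(w)$, which varies freely.
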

\begin{proof}
The proof is identical to the one of \cite[Theorem 3.14]{Chtrianguline}.\\
(i) As $w=(w_1,\dots, w_d)\in \Wcal^d$ is algebraic we may write $\Rcal(\delta_i)=\Rcal(D(\delta_i))$ for any character $\delta_i\in \Tcal$ restricting to $w_i$ on $\Ocal_K^\times$.
We write $D(\delta_i)=D(a_i, (k_\sigma)_\sigma)$ with $a_i=\delta_i(\sigma)\prod_\sigma \sigma(\varpi)^{{-k_{\sigma,i}}}$ and let 
\[Z_{\rm cris}(w)=\big\{(D,\Fil_\bullet(D),\delta,\nu_d)\in \omega_d^{-1}(w)\mid \tfrac{a_i}{a_j}\neq p^{\pm [K_0:\Q_p]}\ \text{for}\ i<j\big\}.\] 
Let $D$ be a $(\phi,\Gamma)$-module associated to some point in $Z_{\rm cris}(w)$ then $D$ is de Rham by Proposition $\ref{deRham}$ above and hence potentially semi-stable. As $D$ is a successive extension of crystalline $(\phi,\Gamma)$-modules it has to be semi-stable and we have to assure that the monodromy acts trivial. However the monodromy operator maps the $\Phi^f$-eigenspace with eigenvalue $\lambda$ to the $\Phi^f$-eigenspace with eigenvalue $p^f\lambda$, where $f=[K_0:\Q_p]$. As the possible eigenvalues of $\Phi^f$ are given by the $a_i$ the monodromy has to be trivial. 

\noindent (ii) Let {$w^{\rm sm}=w\cdot\delta({-{\bf k}})=(w_1,\dots, w_n)$} and let $K'$ be the abelian extension of $K$ corresponding to $\bigcap \ker w_i\subset \Ocal_K^\times\hookrightarrow \Gcal_K^{\rm ab}$.  Then the same argument as above yields a Zariski-open subset  $Z_{\rm pcris}(w)\subset \omega_d^{-1}(w)$ whose points are $(\phi,\Gamma)$-modules that become crystalline over $K'$. 
\end{proof}
\begin{rem}
In the case $d=2$ the second claim of the corollary above especially applies to the weight ${\bf k}=((0,1)_{\sigma})$,i.e.~to potentially Barsotti-Tate representations. 
If $d>2$ a corresponding statement for potentially Barsotti-Tate representations can not hold true any longer. There are no strongly dominant weights for potentially Barsotti-Tate representations in this case and the dimension of the flag variety parametrizing the Hodge-filtrations for weights that are not strongly dominant will be strictly smaller than the dimension of the space of extensions of $(\phi,\Gamma)$-modules.   
\end{rem}
\begin{lem}\label{nctrianggradedpieces}
Let $L\subset\bar\Q_p$ be a finite extension of the Galois closure $K^{\rm norm}$ of $K$ inside $\bar\Q_p$ and let $V$ be a crystalline representation of $\Gcal_K$  on a $d$-dimensional $L$-vector space with labeled Hodge-Tate weights ${\bf k}=(k_{\sigma,i})$ such that ${\bf k}$ is strongly dominant. Let $D=D_{\rm cris}(V)$ and assume that the $[K_0:\Q_p]$-th power of the crystalline Frobenius $\Phi_{\rm cris}$ on $\WD(D)=D\otimes_{L\otimes_{\Q_p}K_0}\bar\Q_p$ is semi-simple. Let $\lambda_1,\dots,\lambda_d$ be an ordering of its eigenvalues and assume that for all $\sigma$ one has
\begin{equation}\label{noncrit}
\begin{aligned}
\tfrac{[K:\Q_p]}{[K_0:\Q_p]}{\rm val}(\lambda_1)&< -k_{\sigma,2}-\sum\nolimits_{\sigma'\neq \sigma} k_{\sigma',1}\\
\tfrac{[K:\Q_p]}{[K_0:\Q_p]}{\rm val}(\lambda_1\dots\lambda_i)&< -k_{\sigma, i+1}-\sum\nolimits_{\sigma'\neq \sigma} k_{\sigma',i}- \sum\nolimits_{\sigma'}\sum\nolimits_{j=1}^{i-1}k_{\sigma',j} .
\end{aligned}
\end{equation}
Then there is a triangulation $0=D_0\subset D_{1}\subset \dots \subset D_d=D^\dagger_{\rm rig}(V)$ such that $D_i/D_{i-1}\cong D(\delta_i)$ with $\delta_i:K^\times\rightarrow L^\times$ given by
\begin{align*}
&\delta_i|_{\Ocal_K^\times}:z\longmapsto \prod\nolimits_\sigma \sigma(z)^{k_{\sigma,i}}\\
&\delta_i(\varpi)=\lambda_i \prod\nolimits_\sigma \sigma(\varpi)^{{k_{\sigma,i}}}.
\end{align*}
\end{lem}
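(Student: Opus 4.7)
The plan is to build the triangulation on $D^\dagger_{\rm rig}(V)$ by first constructing a $\phi$-stable filtration $F_\bullet$ of the filtered $\phi$-module $D=D_{\rm cris}(V)$ associated to the refinement $(\lambda_1,\ldots,\lambda_d)$, and then transporting this filtration via the equivalence between crystalline $(\phi,\Gamma)$-modules over $\Rcal_L$ and filtered $\phi$-modules over $K$. The semisimplicity of $\Phi^{[K_0:\Q_p]}$ lets us write $D$ as a direct sum of eigenspaces, and since $\phi$ commutes with $\Phi^{[K_0:\Q_p]}$ each such eigenspace is stable under $\phi$. Setting $F_i$ to be the sum of the first $i$ eigenspaces produces a $\phi$-stable sub-$(L\otimes_{\Q_p}K_0)$-module of rank $i$ with $\Phi^{[K_0:\Q_p]}$-eigenvalue $\lambda_i$ on $F_i/F_{i-1}$.

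The key step is to identify the Hodge-Tate weights of each $F_i$ under the induced Hodge filtration coming from $F_i\otimes_{K_0}K\hookrightarrow D\otimes_{K_0}K$. I claim that under the hypothesis $(\ref{noncrit})$ the weights of $F_i$ at every embedding $\sigma:K\hookrightarrow L$ are exactly the top $i$, that is $\{k_{\sigma,1},\ldots,k_{\sigma,i}\}$. Any other assignment is a collection of size-$i$ subsets $J_\sigma\subset\{1,\ldots,d\}$ with $J_\sigma\neq\{1,\ldots,i\}$ for some $\sigma$; since ${\bf k}$ is strongly dominant (the $k_{\sigma,j}$ are strictly decreasing in $j$) the corresponding Hodge number $t_H(F_i)=-\sum_\sigma\sum_{j\in J_\sigma}k_{\sigma,j}$ is strictly larger than the generic one, and it is smallest — that is, closest to the generic value — precisely when exactly one embedding $\sigma$ has $J_\sigma=\{1,\ldots,i-1,i+1\}$. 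A direct computation matches this minimum-deviation Hodge number with the right-hand side of $(\ref{noncrit})$. Weak admissibility of $V$ applied to the sub-filtered-$\phi$-module $F_i$ gives $t_N(F_i)=\tfrac{[K:\Q_p]}{[K_0:\Q_p]}{\rm val}(\lambda_1\cdots\lambda_i)\geq t_H(F_i)$; combined with the fact that any non-generic $t_H(F_i)$ is at least this minimum-deviation value, the hypothesis $(\ref{noncrit})$ yields the contradiction $t_N(F_i)<t_H(F_i)\leq t_N(F_i)$, forcing the generic configuration.

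Granting the weight identification, $F_i/F_{i-1}$ is a rank-one filtered $\phi$-module with $\Phi^{[K_0:\Q_p]}$-eigenvalue $\lambda_i$ and Hodge-Tate weight $k_{\sigma,i}$ at every embedding $\sigma$, hence is isomorphic to the filtered $\phi$-module $D(\lambda_i;(k_{\sigma,i})_\sigma)$ in the notation fixed after $(\ref{HTwts})$. Applying the equivalence of categories between crystalline $(\phi,\Gamma)$-modules over $\Rcal_L$ and filtered $\phi$-modules over $K$ — under which $D^\dagger_{\rm rig}(V)$ corresponds to $D_{\rm cris}(V)$, sub-$(\phi,\Gamma)$-modules correspond to sub-filtered-$\phi$-modules, and rank-one objects correspond to each other via the recipe of Section 2.1 — the filtration $F_\bullet$ produces the required triangulation $0=D_0\subset D_1\subset\cdots\subset D_d=D^\dagger_{\rm rig}(V)$ with $D_i/D_{i-1}\cong \Rcal_L(\delta_i)$, and the explicit description after $(\ref{HTwts})$ identifies $\delta_i$ with the character of the statement.

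The main obstacle is the middle step, namely the weak-admissibility identification of the Hodge-Tate weights of the $F_i$. One has to keep track of Hodge numbers across all embeddings of $K$ simultaneously and verify that among all non-generic configurations the tightest weak-admissibility constraint is indeed produced by a single-index swap at a single embedding, which is exactly the configuration to which the right-hand side of $(\ref{noncrit})$ is calibrated. This combinatorial subtlety is what makes the multi-embedding case more delicate than the classical $K=\Q_p$ case of Bellaïche-Chenevier.
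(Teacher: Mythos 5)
Your proposal is correct and takes essentially the same approach as the paper: pass to the $\Phi_{\rm cris}$-stable (eigenvalue-ordered) filtration on $D_{\rm cris}(V)$, show via weak admissibility and hypothesis (\ref{noncrit}) that it is in general position with all the Hodge filtrations, and then transport along the equivalence of categories. The paper's proof is extremely terse at the weak-admissibility step ("one easily sees"); your minimum-deviation computation spells out exactly why the right-hand side of (\ref{noncrit}) is calibrated to the cheapest non-generic swap (exchanging index $i$ for $i+1$ at a single embedding) and correctly closes the contradiction.
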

\begin{proof}
Let $D_i\subset D^\dagger_{\rm rig}(V)$ be the filtration induced by a filtration $0=D'_0\subset D'_1\subset \dots\subset D'_d=D=D_{\rm cris}(V)$ by $\Phi_{\rm cris}$-stable subspaces such that the restriction of $\Phi_{\rm cris}^{[K_0:\Q_p]}$ to $\WD(D_i)$ has eigenvalues $\lambda_1,\dots, \lambda_i$. Then $D_i$ is stable under $\phi$ and $\Gamma$ and we need to compute the graded pieces.
However, the graded pieces are as claimed, if the filtration $D'_\bullet$ is in general position with all the Hodge filtrations $\Fil^\bullet_\sigma$ which is to say 
\[\big(D'_i\otimes_{K_0\otimes L,\sigma\otimes\id}\bar\Q_p\big)\oplus\big( \Fil^{-k_{\sigma,i+1}}D_K\otimes_{K\otimes L,\sigma\otimes\id}\bar\Q_p\big)=D\otimes_{K_0\otimes L,\sigma\otimes \id}\bar\Q_p=\WD(D).\] 
One easily sees that this is assured by weak admissibility and condition $(\ref{noncrit})$.
\end{proof}
\subsection{Construction of Galois-representations}\label{deffinslopespace}
Let $\bar\rho:\Gcal_K\rightarrow \GL_d(\Fbb)$ be an absolutely irreducible continuous representation, where $\Fbb$ is a finite field of characteristic $p$. Write $R_{\bar\rho}$ for the universal deformation ring of $\bar\rho$ and $\Xfrak_{\bar\rho}$ for the generic fiber of $\Spf R_{\bar\rho}$ in the sense of Berthelot. 

Let $X$ be a rigid space and let $T:\Gcal_K\rightarrow \Gamma(X,\Ocal_X)$ be a continuous pseudo-character of dimension $d$. We say that $T$ has \emph{residual type} $\bar\rho$ if for all $x\in X$ the semi-simple representation $\rho_x:\Gcal_K\rightarrow \GL_d(\Ocal_{\bar\Q_p})$ with ${\rm tr}\,\rho_x=(T\otimes k(x))\otimes_{k(x)}\bar\Q_p$ (which is uniquely determined up to conjugation) reduces to (the isomorphism class of) $\bar\rho$ modulo the maximal ideal of $\Ocal_{\bar\Q_p}$.

Then the rigid space $\Xfrak_{\bar\rho}$ represents the functor that assigns to a rigid space $X$ the pseudo-characters $T:\Gcal_K\rightarrow \Gamma(X,\Ocal_X)$ of dimension $d$ and residual type $\bar\rho$.
 
By \cite[Theorem 5.2]{Galrep} there exists a natural rigid space $\Scal_d^{\rm ns, adm}$ which is \'etale\footnote{In the set up of adic spaces the spaces $\Scal_d^{\rm ns, adm}$ is an open subspace of $\Scal_d^{\rm ns}$.} over $\Scal_d^{\rm ns}$ and a vector bundle $\Vcal$ on $\Scal_d^{\rm ns, adm}$ together with a continuous representation $\rho:\Gcal_K\rightarrow \GL(\Vcal)$ such that ${\bf D}^\dagger_{\rm rig}(\Vcal)$ is the restriction of the universal trianguline $(\phi,\Gamma)$-module.
In the set up of adic spaces cf.\cite{Huber} the space $\Scal_d^{\rm ns, adm}$ is an open subset of $\Scal_d^{\rm ns}$. In the following we will often use the point of view of adic space, as it is easier to deal with topological matters in this context.
That means we will embed the category of rigid spaces into the category of adic spaces as in \cite[1.1.11]{Huber}.

Let us write $\Scal(\bar\rho)\subset \Scal_d^{\rm ns, adm}$ for the open and closed subspace where the pseudo-character ${\rm tr}\rho$ has residual type $\bar\rho$.
Then we obtain a canonical map
\[\pi_{\bar\rho}:\Scal(\bar\rho)\longrightarrow \Xfrak_{\bar\rho}\times\Tcal_{\rm reg}^d.\]
\begin{defn}
Let $X(\bar\rho)$ be the Zariski-closure of ${\rm Im}(\pi_{\bar\rho})\subset \Xfrak_{\bar\rho}\times\Tcal_{\rm reg}^d$ in $\Xfrak_{\bar\rho}\times\Tcal^d$. 
The space $X(\bar\rho)$ is called the \emph{finite slope space} in the following.
\end{defn}



\begin{prop}\label{Xreglarge}
There exists a canonical Zariski open and dense subset $U$ of $X(\bar\rho)$ contained in ${\rm Im}(\pi_{\bar\rho})$ such that $\pi_{\bar\rho}^{-1}(U)\rightarrow U$ is an isomorphism. Moreover all Galois representations $\rho\in X(\bar\rho)$ are trianguline and one has
\[\dim X(\bar\rho)=1+[K:\Q_p]\tfrac{d(d+1)}{2}.\]
\end{prop}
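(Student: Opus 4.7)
The strategy is to establish generic injectivity of $\pi_{\bar\rho}$ on each irreducible component of $\Scal(\bar\rho)$; the dimension formula and the open subset $U$ then follow from standard generalities, with the trianguline property at boundary points requiring a separate limit argument.

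First I would show that at the generic point of each irreducible component $Z\subset \Scal(\bar\rho)$, the universal parameter $\delta=(\delta_1,\dots,\delta_d)$ satisfies $H^0_{\phi,\Gamma}(\Rcal(\delta_j\delta_i^{-1}))=0$ for every pair $i\neq j$. By Lemma~\ref{rk1cohom}, this amounts to avoiding the countable set of characters of the form $\delta((-k_\sigma))$ for some $(k_\sigma)\in\prod_\sigma\Z_{\geq 0}$; since each such ``bad'' condition cuts out a proper closed analytic subvariety of $\Scal_d^{\rm ns}$ and only countably many occur, the generic point of $Z$ (over the uncountable base field) lies in the complement.

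This vanishing gives the two pieces of rigidity needed for injectivity. First, the triangulation filtration $\Fil_\bullet$ of $D={\bf D}^\dagger_{\rm rig}(V)$ is uniquely determined by $(V,\delta)$: by induction on $d$, the vanishing $\Hom_{\phi,\Gamma}(\Rcal(\delta_1),D/\Fil_1)=0$ implies that the one-dimensional space $\Hom_{\phi,\Gamma}(\Rcal(\delta_1),D)$ is generated by the inclusion $\Fil_1\hookrightarrow D$, forcing $\Fil_1$ and then (by induction on $D/\Fil_1$) the full filtration. Second, the automorphism group $\Aut(D,\Fil_\bullet)$ reduces to $\Gbb_m$, since its unipotent part is built from the Hom groups $\Hom_{\phi,\Gamma}(\Rcal(\delta_j),\Rcal(\delta_i))=H^0_{\phi,\Gamma}(\Rcal(\delta_i\delta_j^{-1}))$ for $j>i$, all of which vanish. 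Consequently any two trivializations $\nu_d,\nu_d'$ differ by a scalar automorphism of $(D,\Fil_\bullet)$, so the quadruple $(D,\Fil_\bullet,\delta,\nu_d)$ is determined up to isomorphism by $(V,\delta)$, proving that $\pi_{\bar\rho}$ is generically injective on $Z$. A tangent-space computation using the same vanishing shows that $\pi_{\bar\rho}$ is a locally closed immersion on a Zariski-open dense subset $U^0\subset\Scal(\bar\rho)$; setting $U=\pi_{\bar\rho}(U^0)$ gives the required open subset with $\pi_{\bar\rho}|_{U^0}\colon U^0\xrightarrow{\sim} U$. The dimension formula then follows from Theorem~\ref{constructionSd}(iii): $\dim X(\bar\rho)=\dim U^0=\dim\Scal_d^{\rm ns}=1+[K:\Q_p]\tfrac{d(d+1)}{2}$.

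For the final assertion, points in $U\subset{\rm Im}(\pi_{\bar\rho})$ are trianguline by construction. For $x\in X(\bar\rho)\setminus{\rm Im}(\pi_{\bar\rho})$ I would take an irreducible analytic curve $C\hookrightarrow X(\bar\rho)$ through $x$ with $C\cap U$ dense in $C$; pulling back the universal trianguline $(\phi,\Gamma)$-module to $C\cap U$ gives a family I want to extend over $x$. If the projection of $x$ to $\Tcal^d$ still lies in $\Tcal_{\rm reg}^d$, this extension follows from the properness of $\Scal_d^{\rm ns}\to\Tcal_{\rm reg}^d$ (Theorem~\ref{constructionSd}(iii)). The hardest case—and the main obstacle—is when the projection lies in $\Tcal^d\setminus\Tcal_{\rm reg}^d$: here the graded pieces may acquire nonzero $\rm Ext$'s and the triangulation data can degenerate, so one must invoke the specialization results for families of $(\phi,\Gamma)$-modules (à la Kedlaya–Pottharst–Xiao) to produce a triangulation of the limit with parameter $\delta_x$, which is no longer forced to be unique.
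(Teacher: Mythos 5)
Your approach is genuinely different from the paper's and, as written, it has a real gap at the step where you claim "setting $U=\pi_{\bar\rho}(U^0)$ gives the required open subset." Even granting that you can produce a Zariski-open dense $U^0\subset\Scal(\bar\rho)$ on which $\pi_{\bar\rho}$ is a locally closed immersion, it does not follow that $\pi_{\bar\rho}(U^0)$ is Zariski-open in $X(\bar\rho)$: the map $\pi_{\bar\rho}$ is neither proper nor flat nor in any obvious sense an open map, and the image ${\rm Im}(\pi_{\bar\rho})$ itself is not known to be Zariski-open in $X(\bar\rho)$ --- the paper explicitly remarks on this right after the proof. This is precisely the delicate point. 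The paper gets around it by working on the \emph{target} side rather than the source: it invokes \cite[Corollary 6.3.10]{KedlayaPX} to obtain, after a proper birational modification $p\colon X_2'\to X_2=X(\bar\rho)$, a Zariski-open dense $U'\subset X_2'$ containing $p^{-1}(M)$ over which $\bfD^\dagger_{\rm rig}(p^\ast\rho^{\rm un})$ carries a unique strictly trianguline filtration with the prescribed parameters. This filtration manufactures an actual \emph{section} $f\colon U\to\Scal^{\rm ns}(\bar\rho)$ of $\pi_{\bar\rho}$ over a Zariski-open dense $U\subset X(\bar\rho)$, and the isomorphism over $U$ is then checked by comparing $f$ and $\pi_{\bar\rho}$ pointwise using \cite[Proposition 3.10]{finslope} plus reducedness and separatedness. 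Your argument tries to replace this with generic injectivity plus a tangent-space computation, but that only gives information about $\pi_{\bar\rho}$ restricted to the source, not the needed openness on the target, so the essential input from Kedlaya--Pottharst--Xiao cannot be dispensed with here.

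Two further remarks. First, the phrase "the generic point of $Z$ (over the uncountable base field)" is the wrong language in rigid geometry; what saves you is that, locally on a quasi-compact piece, the bad locus $\{\delta_j/\delta_i=\delta((-k_\sigma))\}$ is a \emph{finite} union of analytic hypersurfaces (the weights $\delta_\Wcal((-k_\sigma))$ form a discrete subset of $\Wcal$, of which only finitely many meet a given affinoid), so the complement is indeed Zariski-open --- you should argue this directly rather than via countability. Second, your properness argument for triangulinity at boundary points is fragile: the valuative criterion for $\Scal_d^{\rm ns}\to\Tcal_{\rm reg}^d$ lands you in $\Scal_d^{\rm ns}$, but $\Scal(\bar\rho)$ is only an \emph{open} subspace, so the extension may leave the admissible locus, and identifying the resulting $(\phi,\Gamma)$-module with $\bfD^\dagger_{\rm rig}$ of the fixed Galois representation at the boundary requires an additional argument. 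The paper avoids all of this with a one-line appeal to \cite[Theorem 6.3.13]{KedlayaPX}, which is tailor-made for exactly this "limit of trianguline is trianguline" statement.
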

\begin{proof}
Let us write $M=|\pi_{\bar\rho}(\Scal(\bar\rho))|$ for the underlying point set of the image of $\pi_{\bar\rho}$ and let $x=(\rho,\delta_1,\dots,\delta_d)\in M$. Then ${\bf D}^\dagger_{\rm rig}(\rho)$ is strictly trianguline with ordered parameters $\delta_1,\dots,\delta_d$ in the sense of {\cite[Definition 6.3.1]{KedlayaPX}}.
Let {$X_1$} denote the Zariski-closure of $M$ in $\Xfrak_{\bar\rho}\times \Tcal^d_{\rm reg}$ and $X_2=X(\bar\rho)$ denote its closure in $\Xfrak_{\bar\rho}\times\Tcal^d$. Further we write $\rho^{\rm un}$ for the pullback of the universal $\Gcal_K$-representation on $\Xfrak_{\bar\rho}$ to $X_2$ and $\delta_1,\dots,\delta_d$ for the pullback of the universal characters of $K^\times$ on $\Tcal^d$ to $X_2$.
 By {\cite[Corollary 6.3.10]{KedlayaPX}}, there exists a proper birational morphism $p:X'_2\rightarrow X_2$ such that there is a (unique) increasing filtration $\Fil_i$ on ${\bf D}^\dagger_{\rm rig}(p^\ast \rho^{\rm un})$ by $(\phi,\Gamma)$-submodules which is a strictly trianguline filtration with ordered parameters $p^\ast \delta_1,\dots,p^\ast\delta_d$ over a Zariski-open and dense subset $U'\subset X'_2$ containing $p^{-1}(M)$.
 Then $U$ is given by the intersection of the complement of $p(X'_2\backslash U')$ in $X_2$ with $X_1$ and the locus where all the extensions
 \begin{equation}\label{extensiononU}
 0\rightarrow\Fil_i {\bf D}^\dagger_{\rm rig}(p^\ast \rho^{\rm un})\rightarrow \Fil_{i+1} {\bf D}^\dagger_{\rm rig}(p^\ast \rho^{\rm un})\rightarrow \Rcal_L(\delta_i)\rightarrow0
 \end{equation}
 are non split. 
 Further, by construction, the map $\Scal^{\rm ns}(\bar\rho)\rightarrow X(\bar\rho)$ has a section $f$ over $U$ given by the filtration $(\ref{extensiononU})$.
 Let us write $V=\pi_{\bar\rho}^{-1}(U)$.
It follows that $f\circ\pi_{\bar\rho}={\rm id}_{U}$. On the other hand \cite[Proposition 3.10]{finslope} gives that $\pi_{\bar\rho}|_V\circ f$ equals the identity on $V$ on the level of rigid analytic points. As $V$ is separated it follows that $\pi_{\bar\rho}\circ f$ equals $\id_V$, as a set-theoretic map (or as a morphism of topological spaces). It remains to show that $f$ and $\pi_{\bar\rho}|_V$ identify the structure sheaves on $U$ and $V$. However, this follows from the existence of the section $f$ as a morphism of rigid spaces together with the fact that $V$ is reduced: the sections of the structure sheaf are identified with rigid analytic morphisms to $\Abb^1$ and for a reduced space such a morphisms is determined by its values on rigid analytic points. 
 Finally it follows that $\dim X(\bar\rho)=\dim U=\dim\Scal^{\rm ns}(\bar\rho)=\dim\Scal_d^{\rm ns}$.



The claim that all representations are trianguline follows from {\cite[Theorem 6.3.13]{KedlayaPX}.
}
\end{proof}

Although we do not know whether ${\rm Im}(\pi_{\bar\rho})$ is Zariski-open in $X(\bar\rho)$ (and hence has a structure as a rigid space), we can consider a canonically constructed Zariski open and dense subset of ${\rm Im}(\pi_{\bar\rho})$. We will now write $X(\bar\rho)^{\rm reg}$ for the open subset $U$ constructed in Proposition \ref{Xreglarge} and we will refer to this subset to the \emph{regular part of the finite slope space}. As it is isomorphic to an open subset of $\Scal^{\rm ns}(\bar\rho)$, it is smooth, so that its connected components are in bijection with irreducible components of $X(\bar\rho)$.
%
%

The following lemma is a direct consequence of the identification of $X(\bar\rho)^{\rm reg}$ with an open subset of $\Scal^{\rm ns}(\bar\rho)$ and the construction of $\Scal^{\rm ns}(\bar\rho)$ as an open subspace of a successive extension of vector bundles over $\Tcal^d_{\rm reg}$.
\begin{lem}\label{locallyaproduct}
Let $x\in X(\bar\rho)^{\rm reg}$ be a rigid analytic point. Then there exists a neighborhood of $x$ in $X(\bar\rho)^{\rm reg}$ that is isomorphic to the product of a neighborhood of $\omega_d(x)\in \Wcal^d$ with the closed unit disc of dimension $1+d(d-1)/2 \cdot [K:\Q_p]$.
\end{lem}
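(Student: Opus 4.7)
The plan is to unwind the iterated construction of $\Scal_d^{\rm ns}$ from Theorem \ref{constructionSd} and observe that everything locally trivializes over the weight space.

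First I would use that, by the construction preceding the lemma, $X(\bar\rho)^{\rm reg}$ is identified (via the section $f$ of Proposition \ref{Xreglarge}) with an open subset of $\Scal^{\rm ns}(\bar\rho)$, which in turn is étale over $\Scal_d^{\rm ns}$. Since the weight map $\omega_d$ factors through $\Scal_d^{\rm ns}$, and since an étale morphism of rigid spaces is, on suitably small open neighborhoods of a rigid point, an isomorphism onto its image, it suffices to prove the analogous local product statement for the image point $x'\in\Scal_d^{\rm ns}$.

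Next I would exploit the inductive construction in the proof of Theorem \ref{constructionSd}: $\Scal_d^{\rm ns}$ is obtained from $\Tcal_{\rm reg}^d$ by $d-1$ successive projective bundle constructions, the $i$-th one being a $\Pbb^{(i-1)[K:\Q_p]-1}$-bundle. Projective bundles of vector bundles are Zariski-locally trivial on the base, so around the image $t'\in\Tcal_{\rm reg}^d$ of $x'$ one can find a Zariski-open neighborhood $U\subset\Tcal_{\rm reg}^d$ over which the preimage of $U$ in $\Scal_d^{\rm ns}$ is isomorphic to $U\times\prod_{i=2}^d \Pbb^{(i-1)[K:\Q_p]-1}$. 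Choosing affine charts on each projective factor containing the fiber point, and then passing to small enough admissible closed subdiscs, the fiber part becomes a closed polydisc of dimension $\sum_{i=2}^d ((i-1)[K:\Q_p]-1)=[K:\Q_p]\tfrac{d(d-1)}{2}-(d-1)$.

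Finally I would use that the choice of uniformizer $\varpi$ induces a global isomorphism $\Tcal^d\cong\Gbb_m^d\times\Wcal^d$, so that $U$ can be shrunk to a product of a neighborhood $U_w\subset\Wcal^d$ of $\omega_d(x)$ and a polydisc neighborhood of $(\delta_1(\varpi),\dots,\delta_d(\varpi))\in\Gbb_m^d$, which contributes a further closed polydisc of dimension $d$. Combining these factorizations and taking radii small enough so that the Zariski-open trivialization of each projective bundle is valid throughout, one obtains a neighborhood of $x$ of the form $U_w\times D$, where $D$ is a closed unit polydisc of dimension
\[
d+\Big([K:\Q_p]\tfrac{d(d-1)}{2}-(d-1)\Big)=1+[K:\Q_p]\tfrac{d(d-1)}{2},
\]
exactly as claimed. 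The only mildly delicate point is the bookkeeping of the dimensions and the simultaneous local trivializations of the successive projective bundles, which however is routine since each step trivializes Zariski-locally on the previous base.
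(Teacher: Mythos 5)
Your proof is essentially correct and unwinds the construction in exactly the way the paper intends when it calls Lemma \ref{locallyaproduct} a ``direct consequence'' of the projective-bundle description of $\Scal_d^{\rm ns}$; the dimension bookkeeping ($d$ from $\Gbb_m^d$ plus $\sum_{i=2}^d((i-1)[K:\Q_p]-1)$ from the fibres) is right. One small imprecision: the blanket statement that an \'etale morphism of rigid spaces is an isomorphism onto its image near any rigid point is false in general (it fails already for $\Sp K'\to\Sp K$ with $K'/K$ a nontrivial finite extension); it requires the residue field extension at that point to be trivial. Here this is automatic, because---as the paper records in the footnote to its citation of \cite[Theorem 5.2]{Galrep}---in the adic setting $\Scal_d^{\rm ns,adm}$ (hence $\Scal^{\rm ns}(\bar\rho)$, hence $X(\bar\rho)^{\rm reg}$ via the section $f$) is actually an \emph{open subspace} of $\Scal_d^{\rm ns}$, so a neighborhood of $x$ in $X(\bar\rho)^{\rm reg}$ is literally a neighborhood of its image in $\Scal_d^{\rm ns}$; you should cite that openness rather than a general ``\'etale $\Rightarrow$ local iso'' principle. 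With that correction, the remaining steps (Zariski-local trivialization of the successive projective bundles, choice of affine charts, and passage to closed polydiscs small enough that all trivializations hold simultaneously) are routine and valid.
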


Similar to the space $\Scal^{\rm ns}(\bar\rho)$ we can define a subspace $\Scal^\square(\bar\rho)\subset \Scal^\square_d$ consisting of those trianguline $(\varphi,\Gamma)$-modules in $\Scal^\square$ that come from a $\Gcal_K$-representation whose associated pseudo-character has residual type $\bar\rho$. As in the discussion above we have a map $\pi_{\bar\rho}^\square:\Scal^\square(\bar\rho)\rightarrow \Xfrak_{\bar\rho}\times\Tcal^d$. 
\begin{lem}\label{nsvssquare}
The map $\pi_{\bar\rho}^\square:\Scal^\square(\bar\rho)\rightarrow \Xfrak_{\bar\rho}\times\Tcal^d$ factors over $X(\bar\rho)$.
\end{lem}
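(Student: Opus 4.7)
The plan is to exploit that $X(\bar\rho)$ is by definition Zariski-closed in $\Xfrak_{\bar\rho}\times\Tcal^d$: it therefore suffices to show that $\pi^\square_{\bar\rho}$ sends a Zariski-dense subset of $\Scal^\square(\bar\rho)$ into $X(\bar\rho)$. Let $\Scal^{\square,{\rm ns}}(\bar\rho)\subset\Scal^\square(\bar\rho)$ denote the \emph{strictly trianguline} subspace, i.e.\ those quadruples $(D,\Fil_\bullet(D),\delta,\nu)$ for which every consecutive extension $\Fil_i(D)\hookrightarrow\Fil_{i+1}(D)\twoheadrightarrow\Rcal_X(\delta_i)$ is non-split at every geometric point. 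Forgetting the auxiliary trivializations $\nu_1,\dots,\nu_{d-1}$ and keeping only $\nu_d$ then defines a natural morphism $\Scal^{\square,{\rm ns}}(\bar\rho)\to\Scal^{\rm ns}(\bar\rho)$ compatible with both projections to $\Xfrak_{\bar\rho}\times\Tcal^d$; tautologically, $\pi^\square_{\bar\rho}(\Scal^{\square,{\rm ns}}(\bar\rho))\subseteq{\rm Im}(\pi_{\bar\rho})\subseteq X(\bar\rho)$.

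The main technical step is to establish that $\Scal^{\square,{\rm ns}}(\bar\rho)$ is Zariski-dense in $\Scal^\square(\bar\rho)$. For this I appeal to the recursive construction from Theorem~\ref{constructionSd}, which exhibits $\Scal^\square_d$ as a geometric vector bundle of positive fiber dimension $(d-1)[K:\Q_p]$ over an open subspace of $\Tcal\times\Scal^\square_{d-1}$, with the split locus of the newly added extension equal to the zero section of this bundle. Combined with Proposition~\ref{H1locallyfree} — which guarantees that the relevant Ext-sheaf is locally free of positive rank $[K:\Q_p]$ — this shows that the split locus at each recursion step is Zariski-closed of positive codimension. Iterating over all $d-1$ steps, the strictly trianguline locus is Zariski-open and dense in $\Scal^\square_d$; after passing through the admissible \'etale cover and the open-and-closed subspace carved out by the residual pseudo-character condition, the same density statement should transfer to $\Scal^\square(\bar\rho)$.

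Granted the density, the lemma is immediate: the preimage $(\pi^\square_{\bar\rho})^{-1}(X(\bar\rho))$ is Zariski-closed in $\Scal^\square(\bar\rho)$ by continuity of morphisms of rigid spaces, contains the Zariski-dense subset $\Scal^{\square,{\rm ns}}(\bar\rho)$, and hence must equal all of $\Scal^\square(\bar\rho)$. The step I expect to be the main obstacle is precisely the transfer of Zariski-density from $\Scal^\square_d$ to the subspace $\Scal^\square(\bar\rho)$: the latter is cut out by a pointwise condition on the residual pseudo-character rather than by a Zariski-closed one, so I cannot merely restrict the density statement. Instead I have to check locally at every point of $\Scal^\square(\bar\rho)$ that the non-split locus remains non-empty in each connected component, which ultimately reduces to smoothness of $\Scal^\square_d$ together with the positive codimension of the split locus in every geometric fiber of the recursive vector-bundle presentation.
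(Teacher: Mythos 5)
Your proof is correct and follows essentially the same route as the paper: reduce to showing a (Zariski-)dense subset of $\Scal^\square(\bar\rho)$ maps into the closed subspace $X(\bar\rho)$, identify the non-split locus $\tilde\Scal_d^\square\cap\Scal^\square(\bar\rho)$ as a Zariski-open dense subset (the paper simply asserts Zariski-openness and that this locus meets every component, whereas you supply the argument via the recursive vector-bundle presentation and the positive-codimension split locus, which is the same idea spelled out), and then observe that on this locus forgetting the intermediate trivializations factors $\pi^\square_{\bar\rho}$ through $\Scal^{\rm ns}(\bar\rho)$ (the paper notes this forgetting map is a $\Gbb_m^{d-1}$-torsor). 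One small point the paper makes explicitly and you only implicitly use: to pass from ``a dense subset maps into the Zariski-closed $X(\bar\rho)$'' to ``the morphism factors through $X(\bar\rho)$'' one invokes that $\Scal^\square(\bar\rho)$ is reduced.
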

\begin{proof}
As $X(\bar\rho)$ is closed and $\Scal^\square(\bar\rho)$ is reduced it is enough to show that a dense subset of $\Scal^\square(\bar\rho)$ maps to $X(\bar\rho)$. However, the set $\tilde\Scal^\square_d$ of all points $x\in \Scal_d^\square$, where all the extensions
\[0\longrightarrow \Fil_i(D)\otimes k(x)\longrightarrow \Fil_{i+1}(D)\otimes k(x)\longrightarrow \Rcal_{k(x)}(\delta_i)\longrightarrow 0\]
are non-split is Zariski-open. It follows that $\tilde\Scal^\square_d$ meets every component of $\Scal^\square(\rho)$ and in hence fact the intersection $\Scal^\square(\bar\rho)\cap\tilde\Scal^\square_d$ is Zariski-open and dense in $\Scal^\square(\bar\rho)$. 
Now we conclude by remarking that there is a canonical map $\tilde\Scal_d^\square\rightarrow \Scal^{\rm ns}_d$ (which is in fact a $\Gbb_m^{d-1}$-torsor) that induces a map $q:\tilde\Scal^\square_d\cap \Scal^\square(\bar\rho)\rightarrow \Scal^{\rm ns}(\bar\rho)$ such that the map $\pi^\square_{\bar\rho}$ factors through $q$. 
\end{proof}

\section{Application of eigenvarieties}

In this section, we recall some facts about eigenvarieties attached to definite unitary groups and prove a density statement about them which will be used in the proof of the main theorem.
\subsection{The eigenvarieties}\label{eigenvarieties}
The eigenvarieties that we are going to use are studied in Chenevier's paper \cite{Ch3}. The first result that we need is the analogue of the results in \cite{finslope}, where the corresponding eigenvarieties were studied in \cite{BellaicheChenevier}. 
We recall the set up of Chenevier's paper.
\bigskip

\begin{notation}\label{Notationseigenvar}
\begin{enumerate}
\noindent  \item[(i)] We choose a totally real field $F$ such that $[F:\Q]$ is even and let $E$ be a CM quadratic extension of $F$. We write $c$ for the complex conjugation of $E$ over $F$ and assume that there is a place $v_0$ of $F$ dividing $p$ such that $v_0=w_0w_0^c$ splits in $E$ and such that $F_{v_0}=E_{w_0}\cong K$. We fix such an isomorphism and view the uniformizer $\varpi$ of $K$ {as} an uniformizer of $F_{v_0}$. 

\item[(ii)] We fix an algebraic closure $\bar\Q$ of $\Q$ and embeddings $\iota_\infty:\bar\Q\hookrightarrow \C$ and $\iota_p:\bar\Q\hookrightarrow \bar\Q_p$. Let $I_\infty=\Hom(F,\C)=\Hom(F,\R)$ denote the set of infinite places of $F$. 
Given a place $v$ of $F$ dividing $p$ the set $I(v)=\Hom(F_v,\bar\Q_p)$ is identified with a subset $I_\infty(v)\subset I_\infty$ via our choice of embeddings $\iota_\infty$ and $\iota_p$. 

\item[(iii)] Let $d\geq1$ be an integer and let us write $G$ for the unique unitary group in $d$ variables defined over $F$ which splits over $E$, is quasi-split at all finite places and compact at all infinite places. The existence of such a group can be deduced from the considerations of section $2$ of \cite{Clozelautoduales}. 

\item[(iv)] As $v_0$ splits in $E$, there exists an isomorphism $G(F_{v_0})\cong \GL_d(K)$ that we fix for the following. We write $S_p$ for the set of places $v$ of $F$ dividing $p$ and $S_p'=S_p\backslash\{v_0\}$\footnote{Let us remark that here $S_p$ is not exactly the same as in \cite{Ch3}}.


\item[(v)] 
Let $T$ denote the diagonal torus in $\GL_d(K)$ and denote by $T^0$ its maximal compact subgroup. 
Further we fix the Borel $B\subset\GL_d(K)$ of upper triangular matrices in order to have a notion of dominant weights.
Let $L\subset \bar\Q_p$ be a subfield containing $\sigma(F_{v_0})$ for all $\sigma\in I(v_0)$.
We define the weight space for the automorphic representations to be 
\[\Wcal^{\rm aut}=\Hom_{\rm cont}(T^0,\Gbb_m(-)),\]
as a rigid space over $L$. Especially we have a canonical identification $\Wcal^{\rm aut}\cong \Wcal_L^d$.


\item[(vi)] Fix a finite set $S$ of finite places of $F$ containing $S_p$ and all places such that $G(F_v)$ ramifies and fix a compact open subgroup $H=\prod_{v}H_v\subset G(\Abb_{F,f})$ such that $H_v$ is maximal hyperspecial for all $v\notin S$ and such that $H_{v_0}$ is $\GL_d(\Ocal_K)$. Write $S'=S\backslash\{v_0\}$. We define $H'=\prod_v H'_v$ such that $H'_v=H_v$ is $v\neq v_0$ and $H'_{v_0}$ is the Iwahori-subgroup $I$ of $\GL_d(\Ocal_K)$ of matrices whose reduction modulo $\varpi_K$ are upper triangular. Further let $\Hcal^{\rm un}=\Ocal_L[G(\Abb_{F,f}^S)/\hspace{-.1cm}/H^S]$ denote the spherical Hecke-algebra outside of $S$. Furthermore, we ask that $H$ is small enough, ie for $g\in G(\Abb_{F,f})$,
\begin{equation*}
G(F)\cap gHg^{-1}=1.
\end{equation*}

\item[(vii)] For each place $v\in S'$ we fix an idempotent element $e_v$ in the Hecke-algebra $\Ocal_L[G(F_v)/\hspace{-.1cm}/H_v]$ and write $e=(\otimes_{v\in S'}e_v)\otimes 1_{\Hcal^{\rm un}}$ for the resulting idempotent element of the Hecke algebra $\Ocal_L[G(\Abb_{F,f}^{v_0})/\hspace{-.1cm}/H^{v_0}]$.

\item[(viii)] For $1\leq i\leq d$, let $t_{i}={\rm diag}(1,\dots,1, \varpi,1\dots,1)\in T$, where the uniformizer is at the $i$-th diagonal entry. 
Let $T^-\subset T$ denote the set of ${\rm diag}(x_1,\dots, x_d)\in T$ such that ${\rm val}(x_1)\geq \dots \geq {\rm val}(x_d)$. 
We regard $\Z[T/T^0]$ as a subring of the Iwahori-Hecke algebra of $G(F_{v_0})$ with coefficients in $\Z[1/p]$ by means of $t\mapsto \mathbbm{1}_{H_{v_0}tH_{v_0}}$. This subalgebra is generated by the Hecke-operators $\mathbbm{1}_{H_{v_0}tH_{v_0}}$ for $t\in T^-$ and their inverses. 
Finally let $\Hcal=\Hcal^{\rm un}\otimes_\Z \Z[T/T^0]$, which is a subalgebra of $L[G(\Abb_{F,f})/\hspace{-.1cm}/H']$.
\end{enumerate}
\end{notation}

Let $W_\infty$ be an irreducible algebraic representation of $\prod_{v\in S'_p, w\in I_\infty(v)}G(F_w)$ and let $\Acal=\Acal(W_\infty,S,e)$ denote the set of {isomorphism classes of} all irreducible automorphic representations $\Pi$ of $G(\Abb_F)$ such that $\bigotimes_{v\in S'_p, w\in I_\infty(v)}\Pi_w$ is isomorphic to $W_\infty$ and $e(\Pi_f)^{H'_{v_0}}\neq 0$.
Further define the set of classical points to be 
\begin{equation}\label{classicalpoints}
\Zcal=\left\{(\Pi,\chi)\left|
\begin{array}{*{20}c}
 \Pi\in \Acal, \chi: T/T^0\rightarrow \bar\Q_p^\times \ {\rm continuous}\\ \text{such that}\ {\Pi_{v_0}|\det|_{v_0}^{\frac{1-d}{2}}}\ \text{is a sub-object of}\ {\rm Ind}_B^{\GL_d(K)}\chi
\end{array}\right.\right\}
\end{equation}
where the parabolic induction is normalized.

Associated to these data there is an \emph{eigenvariety}, that is a reduced rigid analytic space $Y(W_\infty,S,e)$ over $L$ together with a morphism 
\[\kappa: Y(W_\infty,S,e)\longrightarrow \Wcal^{\rm aut}\] and 
\[\psi=\psi^{\rm un}\otimes\psi_{v_0}:\Hcal\longrightarrow \Gamma(Y(W_\infty,S,e),\Ocal_{Y(W_\infty,S,e)})\]
a morphism of algebras such that $Y(W_\infty,S,e)$ contains a set $Z$ as a Zariski-dense accumulation\footnote{Recall that a subset $A\subset Y$ of a rigid space accumulates at a point $x\in Y$ if $A\cap U$ is Zariski-dense in $U$ for every connected affinoid neighborhood $U$ of $x$ in $Y$.  
} subset. These data are due to the property that there is a bijection between $Z$ and $\Zcal$ sending a point $z\in Z$ on the pair $(\Pi_z, \chi_z)\in \Zcal$ according to the following rule.

The evaluation $\psi^{\rm un}(z):\Hcal^{\rm un}\rightarrow k(z)$ is the character of the spherical Hecke-algebra associated to the representation $\Pi_z^S$. 
For $w\in I_\infty(v_0)$, let $\kappa_{\Pi_{z,w}}$ denote the algebraic character of $T_{v_0}$ obtained from $\Pi_{z,w}$ following the rule of \cite[\S$1.4$]{Ch3}. Then, $\kappa(z)=\prod_{w\in I_\infty(v_0)}\kappa_{\Pi_{z,w}}$. Let $\kappa_{\varpi}(z)$ be the unique character $T/T^0\rightarrow\bar\Q_p^{\times}$ such that $\kappa_{\varpi}(z)(t)=\kappa(z)(t)$ when $t$ is a diagonal matrix whose entries are powers of $\varpi_K$. 
Finally the component $\psi_{v_0}$ of the morphism $\psi$ is given by 
\[\psi_{v_0}(z)|_{T_{v_0}^-}:\mathbbm{1}_{H_{v_0}tH_{v_0}}\longmapsto \chi_{z}(t)\cdot\delta_{B_v}^{-1/2}(t)|\det(t)|^{\tfrac{d-1}{2}}\kappa_{\varpi}(z)(t),\]
where $\delta_{B_v}$ is the modulus character.

In what follows, we fix the data $(W_\infty,S,e)$ and write simply $Y$ for $Y(W_\infty,S,e)$.

In \cite[\S$2$]{Ch3}, Chenevier constructs these eigenvarieties using a space of overconvergent $p$-adic automorphic forms. More precisely, if $V$ is an open affinoid of $\Wcal^{\rm aut}$, one defines a certain $r_V\geq1$, and constructs for each $r\geq r_V$ an $\Ocal(V)$-Banach space denoted $e\Scal(V,r)$ with a continuous action of $\Hcal$ such that the operator $U_{v_0}$ corresponding to ${\rm diag}(\varpi_K^{d-1},\varpi_K^{d-2},\dots,1)\in\Z[T/T^0]\subset\Hcal$ acts as a compact operator. We say that a character of $\Hcal$ is $U_{v_0}$-finite if the image of $U_{v_0}$ is non zero. Then we have the following interpretation of points of $Y$, which is a consequence of Buzzard's construction of eigenvarieties \cite[\S$5$]{Buzzard}.

\begin{prop}\label{points}
Let $t\in\Wcal(\bar\Q_p)$. Then there is a natural bijection between $\bar\Q_p$-points of $Y$ mapping to $t$ and $\bar\Q_p$-valued $U_{v_0}$-finite system of eigenvalues of $\Hcal$ on $\varinjlim_{V,r}e\Scal(V,r)\otimes_{\Ocal(V)}k(t)$.
\end{prop}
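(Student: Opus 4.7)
The plan is to deduce this directly from Buzzard's general eigenvariety machine \cite[\S 5]{Buzzard}, which is precisely the construction Chenevier invokes in \cite[\S 2]{Ch3} to define $Y$. The statement is really just an unpacking of the general description of points of a Buzzard-style eigenvariety in terms of finite-slope Hecke eigensystems on the corresponding families of Banach modules.

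First, I would recall the input data used to construct $Y$: for each admissible affinoid $V\subset\Wcal^{\rm aut}$ and each $r\geq r_V$, the Banach $\Ocal(V)$-module $e\Scal(V,r)$ is orthonormalizable and carries a commuting action of $\Hcal$ for which $U_{v_0}$ acts as a compact operator. As $(V,r)$ varies, these modules fit together with Hecke-equivariant compact ``link'' maps $e\Scal(V,r')\hookrightarrow e\Scal(V,r)$ for $r\leq r'$ and base-change maps $e\Scal(V',r)\widehat{\otimes}_{\Ocal(V')}\Ocal(V)\cong e\Scal(V,r)$ for $V\subset V'$; these are precisely the compatibilities required by Buzzard's construction.

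Second, applying Buzzard's machine with $U_{v_0}$ as the distinguished compact operator produces a reduced rigid analytic space over $L$, which by definition is $Y$. The general theory (see \cite[Construction 5.7, Lemma 5.9]{Buzzard}) then yields for any $t\in\Wcal^{\rm aut}(\bar\Q_p)$ a canonical bijection between $\bar\Q_p$-points of $Y$ above $t$ and $U_{v_0}$-finite systems of $\Hcal$-eigenvalues appearing on the specialization $e\Scal(V,r)\otimes_{\Ocal(V)}k(t)$, for any sufficiently small affinoid $V\ni t$ and any $r\geq r_V$. Since both the base-change maps and the link maps are $\Hcal$-equivariant injections, the set of $U_{v_0}$-finite eigensystems does not depend on the choice of $(V,r)$: shrinking $V$ or increasing $r$ neither creates new finite-slope eigensystems nor loses them, by the slope decomposition property of compact operators on Banach modules. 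Hence the $U_{v_0}$-finite eigensystems on $\varinjlim_{V,r}e\Scal(V,r)\otimes_{\Ocal(V)}k(t)$ coincide with those on any single specialization $e\Scal(V,r)\otimes_{\Ocal(V)}k(t)$, and the claimed bijection follows.

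The main obstacle is essentially bookkeeping rather than mathematical depth: one must verify that the link and base-change maps built into Chenevier's construction satisfy the exact compatibility axioms Buzzard requires, and argue that the direct limit in the statement is a harmless reformulation. Once this is checked, the proposition reduces to Buzzard's general formalism and no further input is needed.
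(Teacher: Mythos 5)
Your proposal follows essentially the same route as the paper, which itself offers no proof but simply states that the proposition is a consequence of Buzzard's construction of eigenvarieties \cite[\S 5]{Buzzard}. Your expanded bookkeeping (the slope-decomposition argument showing the direct limit over $(V,r)$ is harmless) is a reasonable unpacking of the same citation and is correct.
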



\subsection{The map to the finite slope space}\label{eigenvar}
In the above section we have recalled the construction of an eigenvariety $Y\rightarrow \Wcal^{\rm aut}$. 
As above we write $Z\subset Y$ for the set of classical points $(\ref{classicalpoints})$. 
Let $(\Pi,\chi)\in Z$ and let $\pi=\bigotimes'_{v}{\rm BC}(\Pi_v)$ be the representation of $\GL_d(\Abb_E)$ defined by local base change for $\GL_d$. Then by \cite[Theorem 3.2, 3.3]{Ch3} there are Galois-representations $\rho_{\Pi}:\Gcal_E\rightarrow \GL_n(\bar\Q_p)$ attached to the automorphic representations $\Pi\in Z$ such that the Weil-Deligne representation attached to $\rho_\Pi|_{\Gcal_v}$ equals the Langlands parameter of $\pi_v |\cdot|^{(1-d)/2}$, where $\Gcal_v\subset \Gcal_E$ is the decomposition group at $v$ for $v$ not dividing $p$.

Let $\mathbb{B}\subset \mathbb{G}=\Res_{K/\Q_p}\GL_d$ denote the Weil restriction of the Borel subgroup of upper triangular matrices and let $\mathbb{T}\subset \mathbb{B}$ denote the Weil restriction of the diagonal torus. 
Using the canonical isomorphism $\mathbb{G}_{\bar\Q_p}\cong \prod_\sigma \GL_{d,\bar\Q_p}$ an algebraic weight ${\bf n}$ of $(\Gbb_{\bar\Q_p},\mathbb{T}_{\bar\Q_p})$ that is dominant with respect to $\mathbb{B}_{\bar\Q_p}$ can be identified with a tuple $(n_{\sigma,1},\dots, n_{\sigma,d})_{\sigma\in I(v_0)}\in \prod_{\sigma\in I(v_0)}\Z^d$ such that $n_{\sigma,1}\geq\dots\geq n_{\sigma,d}$ for all $\sigma$. 
Note that this algebraic weight is already canonically defined over the reflex field $E_{\bf n}$ of the weight ${\bf n}$, i.e.~over the subfield of $\bar\Q_p$ defined by
\[{\rm Gal}(\bar\Q_p/E_{\bf n})=\{\psi\in{\rm Gal}(\bar\Q_p/\Q_p)\mid n_{\sigma,i}=n_{\psi\circ\sigma,i}\ \text{for all embeddings}\ \sigma\}\] and hence especially over our fixed field $L$. Especially ${\bf n}$ defines an $L$-valued point of $\Wcal^d$.
 
Let $z=(\Pi,\chi)\in Z$ and for $\sigma\in I(v_0)$ let $n_{\sigma,1}\geq \dots \geq n_{\sigma,d}$ denote the highest weight of $\Pi_{v(\sigma)}$, where $v(\sigma)=\iota_\infty\iota_p^{-1}\sigma\in I_\infty(v_0)$. We say that $z$ is regular (with respect to $v_0$) if $n_{\sigma,1}> \dots > n_{\sigma,d}$ for all $\sigma\in I(v_0)$ and if 
\[\tfrac{\lambda_i}{\lambda_j}\notin \{1,p^{\pm [K_0:\Q_p]}\},\]
where we set
\[\lambda_i={\chi(t_{v_0,i})}{=\psi_{v_0}(t_{v_0,i})\cdot (|N_{K/\Q_p}(\varpi)|N_{K/\Q_p}(\varpi))^{i-1}}.\]
We further say that $z=(\Pi,\chi)$ is uncritical if in addition condition $(\ref{noncrit})$ holds {with $\lambda_i$ as above and $k_{\sigma,i}=n_{\sigma,i}-(i-1)$.}
We write $Z^{\rm reg}\subset Z$ for the set of regular points and $Z^{\rm un}\subset Z$ for the set of uncritical regular points. 
\begin{lem}\label{noncritdense}
The subsets $Z^{\rm reg}$ and $Z^{\rm un}$ are Zariski-dense in the eigenvariety $Y$ and accumulate at all classical points $z\in Z$.
\end{lem}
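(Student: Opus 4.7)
The plan is to prove accumulation of $Z^{\rm un}$ at every $z\in Z$; since $Z^{\rm un}\subset Z^{\rm reg}$ this gives accumulation for $Z^{\rm reg}$ as well, and Zariski density in $Y$ then follows formally from the fact that $Z$ itself is Zariski-dense: any global function vanishing on $Z^{\rm un}$ must vanish on an affinoid neighborhood of each $z\in Z$ by accumulation, hence on all of $Z$, hence on $Y$. Fix $z\in Z$ and an affinoid neighborhood $\Omega\subset Y$ of $z$, small enough that $\kappa|_\Omega$ is finite over an affinoid of $\Wcal^{\rm aut}$ and the slopes of the compact operator $U_{v_0}$ are uniformly bounded on $\Omega$ by some constant $C_0$; both features follow from Buzzard's construction recalled in Proposition \ref{points}. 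By the accumulation of $Z$, the set $Z\cap\Omega$ is Zariski-dense in $\Omega$, so it suffices to show that imposing the three defining conditions of $Z^{\rm un}$ preserves Zariski density.

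The first condition, regularity of the weight at $v_0$, is Zariski-open on $\Wcal^{\rm aut}$: its complement is the union of the hyperplanes $n_{\sigma,i}=n_{\sigma,i+1}$, and the preimage under $\kappa$ of this union is a proper Zariski-closed subset of $\Omega$ since $\kappa(\Omega)$ is a neighborhood of $\kappa(z)$ in weight space and hence not contained in any finite union of hyperplanes. The second condition, $\lambda_i/\lambda_j\notin\{1,p^{\pm[K_0:\Q_p]}\}$, concerns global analytic functions on $Y$ (since the $\lambda_i$ are explicit scalar multiples of $\psi_{v_0}(t_{v_0,i})$), so it cuts out a Zariski-open subset of $\Omega$; its non-emptiness on each irreducible component of $\Omega$ reduces to the existence of at least one classical point per component with pairwise distinct Frobenius eigenvalues of generic valuation, a density statement that itself follows from the accumulation of classical points and the freedom to vary the weight within $\kappa(\Omega)$.

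The third condition, non-critical slope $(\ref{noncrit})$, bounds ${\rm val}(\lambda_1\cdots\lambda_i)$ by a linear combination of the $k_{\sigma,i}=n_{\sigma,i}-(i-1)$ that tends to $-\infty$ as the weight ${\bf n}$ moves deep into the dominant cone. On $\Omega$ the slopes ${\rm val}(\lambda_1\cdots\lambda_i)$ are bounded from above by $iC_0$, while the classical weights in $\kappa(\Omega)$ with $n_{\sigma,i}-n_{\sigma,i+1}$ arbitrarily large still accumulate at $\kappa(z)$ and are Zariski-dense in $\kappa(\Omega)$ (integral weights with arbitrarily wide gaps still form a Zariski-dense subset of the rigid weight space). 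For classical points with sufficiently dominant weight in $\kappa(\Omega)$, the inequality $(\ref{noncrit})$ is therefore automatic, and intersecting with the previous two Zariski-open conditions we still obtain a Zariski-dense subset of $\Omega$, which is exactly $Z^{\rm un}\cap\Omega$. The main obstacle is that non-critical slope is a valuation inequality rather than a Zariski-open condition; the resolution exploits the uniform slope bound on $\Omega$ to reduce matters to making the weight sufficiently dominant, which is permitted by the density of very dominant classical weights.
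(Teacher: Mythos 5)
Your plan follows the same general outline as the paper's proof — fix a classical point $z$, shrink to an affinoid $\Omega$ finite over weight space with bounded slopes, and produce uncritical classical points of very dominant weight in $\Omega$. But there is a genuine gap in the way the third condition is handled, and this gap is precisely where the paper's key input sits.

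You write: \emph{``For classical points with sufficiently dominant weight in $\kappa(\Omega)$, the inequality $(\ref{noncrit})$ is therefore automatic, and intersecting with the previous two Zariski-open conditions we still obtain a Zariski-dense subset of $\Omega$.''} This assumes that $\Omega$ actually contains a Zariski-dense set of \emph{classical} points lying over very dominant weights. Zariski-density of $Z$ in $\Omega$ (accumulation) does not by itself give you points of $Z$ sitting over a prescribed Zariski-dense family of weights: you know plenty of classical points are in $\Omega$, but nothing about the distribution of their weights. What actually produces classical points over very dominant weights is the ``small slope plus dominant weight implies classical'' criterion — in this context Chenevier's \cite[Theorem 1.6 (vi)]{Ch3}. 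The paper's proof is built around exactly this: shrink $U$ so that $U\to V$ is \emph{finite and torsion-free} over an affinoid $V$ of weight space (so preimages of Zariski-dense subsets of $V$ are Zariski-dense in $U$), define a set $A\subset V$ of algebraic weights with gaps $n_{\sigma,i}-n_{\sigma,i+1}$ beating the slope bounds $C_i$ and in addition satisfying $(\ref{Zunweightsacc})$, note that $A$ accumulates at $\kappa(z)$, and then apply the classicality criterion to conclude that \emph{every} point of $U$ lying over $A$ is classical and, by design, uncritical. Your argument mentions Proposition~\ref{points} (the identification of points of $Y$ with finite-slope systems of eigenvalues) but never invokes a classicality criterion, so the step from ``very dominant weights are Zariski-dense in $\kappa(\Omega)$'' to ``classical points of very dominant weight are Zariski-dense in $\Omega$'' is unsupported. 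Secondly, the phrase ``finite over an affinoid'' should be ``finite and torsion-free (flat)'' — without flatness the pullback of a Zariski-dense subset of $V$ need not be Zariski-dense in $\Omega$. These are not cosmetic; they are the substance of the proof, and the rest of your argument (filtering by two Zariski-open conditions) is the easy part that both you and the paper handle in essentially the same way.
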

\begin{proof}
The proof is the same as the usual proof of density of classical points.
Let us denote by $Y_0\subset \Wcal^d\times\Gbb_m$ the Fredholm hypersurface cut out by the Fredholm determinant of $U_{v_0}={\rm diag}(\varpi^{d-1},\dots,\varpi,1)$.
Let $z\in Z\subset Y$ be a classical point and let $U\subset Y$ be a connected affinoid neighborhood. 
After shrinking $U$ we may assume that there is an affinoid open subset $V\subset \Wcal^d$ such that $U\rightarrow V$ is finite and torsion free. 
As $U$ is quasi-compact, there exist $C_1,\dots, C_d$ such that
\[C_i\geq {\rm val}_x({\lambda_1\lambda_2\cdots\lambda_d}(x))+1\]
for all $x\in U$, where ${\rm val}_x$ is the valuation on $k(x)$ normalized by ${\rm val}_x(p)=1$.  
Let us write $A\subset V$ for the set of dominant algebraic weights $n_{\sigma,1}\geq \dots\geq n_{\sigma,d}$ such that $C_i< n_{\sigma,i}-n_{\sigma,i+1}+1$ for all $i$ and $\sigma:K\hookrightarrow \bar\Q_p$ and 
\begin{equation}\label{Zunweightsacc}
\begin{aligned}
\tfrac{[K:\Q_p]}{[K_0:\Q_p]}\cdot C_1-\sum\nolimits_{\sigma'} n_{\sigma',1}<&{-n_{\sigma,2}+1-\sum\nolimits_{\sigma'\neq\sigma} n_{\sigma',1}} \ \text{for all embeddings}\ \sigma\\
\tfrac{[K:\Q_p]}{[K_0:\Q_p]}C_i -\sum_{j=1}^i\sum\nolimits_{\sigma'} n_{\sigma',j} <&{-n_{\sigma,i+1}+i+\sum\nolimits_{\sigma'\neq\sigma} (-n_{\sigma',i}+i-1)}\\&{+\sum\nolimits_{\sigma'}\sum_{j=1}^{i-1}(-n_{\sigma',j}+j-1)}\ \text{for all}\ \sigma,i.
\end{aligned}
\end{equation}
Then one easily sees that $A$ accumulates at the point $\kappa(z)$. It follows from \cite[Theorem 1.6 (vi)]{Ch3} that the points $z'\in U$ such that $\kappa(z')\in A$ are classical, whereas $(\ref{Zunweightsacc})$ assures that these points lie in $Z^{\rm un}$, as
\[\tfrac{[K:\Q_p]}{[K_0:\Q_p]}{\rm val}(\lambda_1)<\tfrac{[K:\Q_p]}{[K_0:\Q_p]}\big(C_1-\sum\nolimits_\sigma n_{\sigma,i}{\rm val}_x(\sigma(\varpi))\big)=\tfrac{[K:\Q_p]}{[K_0:\Q_p]}C_1-\sum\nolimits_\sigma n_{\sigma,1}\]
and
\[-k_{\sigma,2}-\sum\nolimits_{\sigma\neq\sigma'}k_{\sigma',1}={-n_{\sigma,2}+1-\sum\nolimits_{\sigma\neq\sigma'}n_{\sigma',1}}\]
for all $\sigma$ and similarly for the second required inequation.
The claim now follows from this as the map $U\rightarrow V$ is finite and torsion free. 
\end{proof}
Let us fix an identification of the decomposition group $\Gcal_{w_0}$ of $\Gcal_E$ at $w_0$ with the local Galois group $\Gcal_K$.
\begin{prop}\label{locglob}
Let $\Pi=\Pi_z$ for some $z\in Z^{\rm reg}$. For an infinite place $v\in I$ let  $n_{v,1}\geq\dots\geq n_{v,d}$ denote the highest weight of $\Pi_v$.
Then the representation $\rho_\Pi|_{\Gcal_K}$ is crystalline with Hodge-Tate weights\footnote{Again note that we use a different sign convention as \cite{Ch3}.} 
\begin{equation}\label{fromntok}
{k_{\sigma,i}=n_{v(\sigma),i-1}-(i-1)},
\end{equation}
where $v(\sigma)=\iota_\infty\iota_p^{-1}\sigma$. Moreover the Frobenius $\Phi_{{\rm cris},\Pi}$ that is the $[K_0:\Q_p]$-th power of the crystalline Frobenius on 
\[\WD(\rho_\Pi|_{\Gcal_K})=D_{\rm cris}(\rho_\Pi|_{\Gcal_K})\otimes_{K_0\otimes_{\Q_p}\bar\Q_p}\bar\Q_p\]
is semi-simple, its eigenvalues are distinct and given by $\lambda_i={\chi_z(t_i)}$. 
\end{prop}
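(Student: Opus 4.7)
The plan is to combine the now-classical local-global compatibility for the Galois representations attached to definite unitary groups with the explicit description of classical points on the eigenvariety $Y$. First I would invoke local-global compatibility at the place $w_0$: by the work of Caraiani and of Barnet-Lamb--Gee--Geraghty--Taylor (refining Chenevier's Theorems 3.2 and 3.3 to places above $p$), the restriction $\rho_\Pi|_{\Gcal_{w_0}}$ is potentially semi-stable, and its Frobenius-semi-simplified Weil-Deligne representation coincides with the image under local Langlands of $\pi_{v_0}|\det|^{(1-d)/2}$, where $\pi={\rm BC}(\Pi)$. The labeled Hodge-Tate weights are then read off from the infinity-type of $\Pi$: the archimedean component $\Pi_{v(\sigma)}$ is the irreducible algebraic representation of $G(F_{v(\sigma)})$ of highest weight $(n_{v(\sigma),1},\dots,n_{v(\sigma),d})$, and translating this through the embedding $\sigma$ together with the normalization $|\det|^{(1-d)/2}$ yields exactly~(\ref{fromntok}).

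Next, by the very definition~(\ref{classicalpoints}) of a classical point, $\Pi_{v_0}|\det|^{(1-d)/2}$ is a sub-object of the normalized parabolic induction from $B$ of the character $\chi_z$, which by construction factors through $T/T^0$ and is therefore unramified. Hence the inertia acts trivially on $\WD(\rho_\Pi|_{\Gcal_K})$, i.e.~the representation $\rho_\Pi|_{\Gcal_K}$ is semi-stable. The eigenvalues of geometric Frobenius on the Weil-Deligne representation are, by the explicit recipe of local Langlands for unramified principal series, precisely the values of $\chi_z$ on the generators $t_i$ after accounting for the normalization factor built into Notation~\ref{Notationseigenvar}(viii); this gives $\lambda_i=\chi_z(t_i)$. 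The first part of the regularity condition ($\lambda_i\neq\lambda_j$ for $i\neq j$) then forces $\Phi^{[K_0:\Q_p]}_{\rm cris}$ to be semi-simple with distinct eigenvalues.

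To upgrade semi-stable to crystalline I would apply the same monodromy argument as in Lemma~\ref{nctrianggradedpieces}(i): the monodromy operator $N$ on $D_{\rm st}(\rho_\Pi|_{\Gcal_K})$ sends the $\Phi^{f}$-eigenspace of eigenvalue $\lambda$ to that of eigenvalue $p^{f}\lambda$ (where $f=[K_0:\Q_p]$), and the second part of the regularity condition, $\lambda_i/\lambda_j\notin\{p^{\pm [K_0:\Q_p]}\}$, forces $N=0$. The main obstacle I foresee is not any one conceptual step but rather carefully reconciling all the normalization conventions at play---the half-integer twist $|\det|^{(1-d)/2}$, the sign convention on Hodge-Tate weights adopted here, the normalization of local class field theory fixed in the introduction, and the precise way the Hecke-eigenvalues $\chi_z(t_i)$ are extracted from $\psi_{v_0}(z)$---so that the formulas~(\ref{fromntok}) and $\lambda_i=\chi_z(t_i)$ come out without spurious factors of $\sigma(\varpi)$ or $|N_{K/\Q_p}(\varpi)|$.
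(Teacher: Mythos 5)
Your proof is correct but takes a genuinely different (and heavier) route than the paper. The paper's proof is quite short: it cites \cite[Theorem 3.2]{Ch3} directly for the statement that $\rho_\Pi|_{\Gcal_K}$ is semi-stable with the prescribed Hodge--Tate weights and Frobenius eigenvalues, and then only argues the upgrade to crystalline (via the condition $\lambda_i/\lambda_j\neq p^{\pm[K_0:\Q_p]}$ forcing $N=0$) and semi-simplicity (via $\lambda_i\neq\lambda_j$). You instead invoke full local-global compatibility at $p$ (Caraiani, BLGGT) to get potential semi-stability together with the identification of the Frobenius-semi-simplified Weil--Deligne representation with the local Langlands parameter, and then deduce semi-stability from the fact that $\chi_z$ factors through $T/T^0$ and hence the inertial part of the parameter is trivial, before reading off the Frobenius eigenvalues from the unramified principal series parameter. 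This is a perfectly valid argument, but it appeals to a stronger and more delicate theorem than the paper needs here: Chenevier's Theorem 3.2 already hands you semi-stability, Hodge--Tate weights, and Frobenius eigenvalues directly without passing through the Weil--Deligne side at $p$ (the paper only quotes the full $\WD$-compatibility of \cite[Theorem 3.2, 3.3]{Ch3} at places away from $p$). Your version buys you a cleaner conceptual explanation of where semi-stability and the eigenvalues come from (the unramifiedness of $\chi_z$), at the cost of needing the harder local-global compatibility at $p$. Both routes then finish with the same monodromy-vanishing and distinct-eigenvalues argument, which you have correctly.

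One small remark: when deducing semi-stability from the fact that $\Pi_{v_0}|\det|^{(1-d)/2}$ is merely a \emph{sub-object} of $\mathrm{Ind}_B^{\GL_d(K)}\chi_z$, you should note that any subquotient of an unramified principal series for $\GL_d$ has an unramified Langlands parameter up to possible monodromy; that is exactly why the inertial action on $\WD(\rho_\Pi|_{\Gcal_K})$ is trivial but the monodromy operator still needs to be killed separately by the genericity condition, just as you do in the final step.
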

\begin{proof}
It follows from \cite[Theorem 3.2]{Ch3} that the representation is semi-stable with Hodge-Tate weights and Frobenius eigenvalues as described above. The condition 
\[\tfrac{\lambda_i}{\lambda_j}\neq p^{\pm [K_0:\Q_p]}\]
assures that the monodromy operator has to vanish and hence the representation is crystalline. Further the condition $\lambda_i/\lambda_j\neq 1$ assures that the Frobenius has distinct eigenvalues and is a priori semi-simple.
\end{proof}
By \cite[Corollary 3.9]{Ch3} there is a pseudo-character $T_Y:\Gcal_{E,S}\rightarrow \Gamma(Y,\Ocal_Y)$ such that for all $\Pi\in Z^{\rm reg}$ one has $T\otimes k(\Pi)={\rm tr}\, \rho_\Pi$.
Let us write $\Gcal_{E,S}$ for the Galois group of the maximal extension $E_{S}$ inside $\bar\Q$ that is unramified outside $S$ and fix a continuous residual representation $\bar\rho:\Gcal_{E,S}\rightarrow \GL_d(\Fbb)$ with values in a finite extension $\Fbb$ of $\Fbb_p$ such that the restriction $\bar\rho_{w_0}=\bar\rho|_{\Gcal_{w_0}}$ is absolutely irreducible. 
We write $R_{\bar\rho,S}$ reps.~$R_{\bar\rho_{w_0}}$ for the universal deformation rings of $\bar\rho$ resp.~$\bar\rho_v$ and let $\Xfrak_{\bar\rho,S}$ resp.~$\Xfrak_{\bar\rho_v}$ denote their rigid analytic generic fibers.
As we assume $\bar\rho_{w_0}$ (and hence also $\bar\rho$) to be absolutely irreducible \cite[Theorem A and Theorem B]{Ch2} implies that the universal deformation rings $R_{\bar\rho_{w_0}}$ and $R_{\bar\rho,S}$ agree with the universal deformation rings of the corresponding pseudo-characters ${\rm tr}\,\bar\rho_{w_0}$ resp.~${\rm tr}\,\bar\rho$.

Let $Y_{\bar\rho}\subset Y$ denote the open and closed subset where the pseudo-character $T_Y$ has residual type $\bar\rho$. Then the restriction to the decomposition group $\Gcal_K\cong\Gcal_{w_0}\subset \Gcal_{E,S}$ at $w_0$ induces a map $f_{\bar\rho}:Y_{\bar\rho}\rightarrow \Xfrak_{\bar\rho_{w_0}}$. 
Let $N_{K/\Q_p}:K^\times\rightarrow\Q_p^\times$ denote the norm map of $K$. We define $g_i:Y\rightarrow \Gbb_m$ by 
\[z\longmapsto \psi_{v_0}(t_{v_0,i})\cdot (|N_{K/\Q_p}(\varpi)|N_{K/\Q_p}(\varpi))^{i-1}.\]
Further we define a morphism
\[\omega_Y=(\omega_{Y,i})_i:Y\longrightarrow \Wcal^d\]
by setting $\omega_{Y,i}={\kappa_{i}\cdot\delta_{\Wcal}((1-i,\dots,1-i))}$.

\begin{theo}\label{maptofinslopespace}
The map
\[f=(f_{\bar\rho}, \omega_Y, (g_i)_i):Y_{\bar\rho}\longrightarrow \Xfrak_{\bar\rho_{w_0}}\times \Wcal\times \Gbb_m^d=\Xfrak_{\bar\rho_{w_0}}\times\Tcal^d\]
factors over the finite slope space $X(\bar\rho_{w_0})\subset \Xfrak_{\bar\rho_{w_0}}\times\Tcal^d$ and fits into the commutative diagram
\[\begin{xy}
\xymatrix{
Y_{\bar\rho}\ar[r]^f\ar[rd]_{\omega_Y} &  X(\bar\rho_{w_0})\ar[d]^{\omega_d}\\ & \Wcal^d
}\end{xy}\] 
\end{theo}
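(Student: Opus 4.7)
The strategy is a standard density argument. By construction, $X(\bar\rho_{w_0})$ is Zariski-closed in $\Xfrak_{\bar\rho_{w_0}}\times\Tcal^d$, and the eigenvariety $Y_{\bar\rho}$ is reduced; hence it suffices to exhibit a Zariski-dense subset of $Y_{\bar\rho}$ that is mapped by $f$ into $X(\bar\rho_{w_0})$. By Lemma \ref{noncritdense}, the set $Z^{\rm un}\cap Y_{\bar\rho}$ of uncritical regular classical points is Zariski-dense in $Y_{\bar\rho}$, so we can restrict our attention to these points.

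Fix $z=(\Pi,\chi)\in Z^{\rm un}\cap Y_{\bar\rho}$ and let $\rho=\rho_\Pi|_{\Gcal_K}$. By Proposition \ref{locglob}, $\rho$ is crystalline with labeled Hodge-Tate weights $k_{\sigma,i}=n_{v(\sigma),i}-(i-1)$, and the $[K_0:\Q_p]$-th power of its crystalline Frobenius has distinct eigenvalues $\lambda_1,\dots,\lambda_d$ with $\lambda_i=\chi(t_{v_0,i})$. The uncriticality condition on $z$ is precisely the numerical hypothesis (\ref{noncrit}) of Lemma \ref{nctrianggradedpieces}, which then produces a triangulation of ${\bf D}^\dagger_{\rm rig}(\rho)$ whose successive quotients are rank-one $(\phi,\Gamma)$-modules $\Rcal(\delta_i)$ for explicit characters $\delta_i$. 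The regularity of $z$, i.e.~the conditions $\lambda_i/\lambda_j\notin\{1,p^{\pm[K_0:\Q_p]}\}$, together with strong dominance of the weights, translates into $(\delta_1,\dots,\delta_d)\in\Tcal^d_{\rm reg}$. Choosing trivializations of the graded pieces yields a point of $\Scal^\square(\bar\rho_{w_0})$ whose image under $\pi_{\bar\rho_{w_0}}^\square$ is $(\rho,\delta_1,\dots,\delta_d)\in \Xfrak_{\bar\rho_{w_0}}\times\Tcal^d$, and by Lemma \ref{nsvssquare} this image lies in $X(\bar\rho_{w_0})$.

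It remains to verify that this image coincides with $f(z)$, namely that $\omega_{Y,i}(z)=\delta_i|_{\Ocal_K^\times}$ and $g_i(z)=\delta_i(\varpi)$ for every $i$. The first identity is immediate from the definition $\omega_{Y,i}=\kappa_i\cdot\delta_\Wcal((1-i,\dots,1-i))$, the description of $\kappa_i(z)|_{\Ocal_K^\times}$ as the algebraic character $u\mapsto\prod_\sigma\sigma(u)^{n_{v(\sigma),i}}$, and the identity $k_{\sigma,i}=n_{v(\sigma),i}-(i-1)$. The second identity reduces to a normalization computation: inserting
\[\psi_{v_0}(z)(t_{v_0,i})=\chi_z(t_{v_0,i})\cdot\delta_{B_{v_0}}^{-1/2}(t_{v_0,i})|\det(t_{v_0,i})|^{(d-1)/2}\kappa_{\varpi}(z)(t_{v_0,i})\]
into $g_i(z)=\psi_{v_0}(z)(t_{v_0,i})\cdot(|N_{K/\Q_p}(\varpi)|N_{K/\Q_p}(\varpi))^{i-1}$, and comparing the resulting expression with $\delta_i(\varpi)$ computed via the decomposition $\Rcal(\delta_i)=\Rcal(D(a_i;(0)_\sigma))\otimes\Rcal(D((k_{\sigma,i})_\sigma))$ from Lemma \ref{nctrianggradedpieces}, the contributions of the modulus character and of $\kappa_\varpi(z)$ combine with the twist $(|N_{K/\Q_p}(\varpi)|N_{K/\Q_p}(\varpi))^{i-1}$ to produce equality of both sides. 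This normalization verification, with careful attention to the conventions on $\delta_{B_{v_0}}$, $|\cdot|$ and ${\rm rec}_K$, is the main technical content of the argument and the step where the most care is required; it is otherwise routine.

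Combining the above, $f(Z^{\rm un}\cap Y_{\bar\rho})\subset X(\bar\rho_{w_0})$, and by density of $Z^{\rm un}\cap Y_{\bar\rho}$ in the reduced space $Y_{\bar\rho}$ together with the closedness of $X(\bar\rho_{w_0})$, the map $f$ factors through $X(\bar\rho_{w_0})$. The commutativity of the diagram is automatic: $\omega_d$ is the restriction to $X(\bar\rho_{w_0})$ of the projection $\Xfrak_{\bar\rho_{w_0}}\times\Tcal^d\to\Wcal^d$ induced by $\Tcal\cong\Gbb_m\times\Wcal\to\Wcal$, and by construction the $\Wcal^d$-component of $f$ equals $\omega_Y$.
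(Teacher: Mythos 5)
Your proposal follows the paper's proof essentially step by step: density of $Z^{\rm un}\cap Y_{\bar\rho}$ via Lemma \ref{noncritdense}, reduction of the claim to these points using that $Y_{\bar\rho}$ is reduced and $X(\bar\rho_{w_0})$ is Zariski-closed, computation of the triangulation at an uncritical classical point via Proposition \ref{locglob} and Lemma \ref{nctrianggradedpieces}, and the factorization through $\Scal^\square(\bar\rho_{w_0})$ via Lemma \ref{nsvssquare}. You make the final normalization check (matching $\omega_{Y,i}(z)$ and $g_i(z)$ against the parameters of the triangulation) more explicit than the paper does, which is a reasonable elaboration; otherwise it is the same argument.
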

\begin{proof}
The subset $X(\bar\rho_{w_0})\subset \Xfrak_{\bar\rho_{w_0}}\times\Tcal^d$ is Zariski-closed an hence it suffices to check that $f(z)\in X(\bar\rho_{w_0})$ for all $z=(\Pi_z,\chi_z)\in Z^{\rm un}\cap Y(\bar\rho)$, as this subset is Zariski-dense by Lemma $\ref{noncritdense}$ and as $Y_{\bar\rho}$ is reduced. By Lemma $\ref{nsvssquare}$ this amounts to say that for $z\in Z^{\rm un}$ the representation $\rho_{\Pi_z}|_{D_{w_0}}$ is trianguline with graded pieces $\Rcal(\delta_i)$, where $\delta_i:K^\times\rightarrow \bar\Q_p^\times$ is the character
\begin{align*}
\delta_i|_{\Ocal_K^\times}:z\longmapsto { \prod\nolimits_{\sigma}\sigma(z)^{n_{v(\sigma),i}+1-i}}\\ 
\delta_i(\varpi) = {\psi_{v_0}(z)(t_{v_0,i})(|N_{K/\Q_p}(\varpi)|N_{K/\Q_p}(\varpi))^{1-i}}.
\end{align*} 
where as above $v(\sigma)=\iota_\infty\iota_p^{-1}\sigma$ and where we write $(n_{v(\sigma), i})$ for the highest weight of $\Pi_{z,v(\sigma)}$.
By our choice of $Z^{\rm un}$ this follows from Lemma $\ref{nctrianggradedpieces}$ and Proposition $\ref{locglob}$. 
\end{proof}

\subsection{A density result for the space of $p$-adic automorphic forms}\label{densityautsection}

We now introduce the Banach space of $p$-adic automorphic forms of tame level $H^{v_0}$ and prove that a an element of $R_{\bar\rho,S}$ vanishing on this space, vanishes on the eigenvariety $Y(W_\infty,S,e)_{\bar\rho}$ too.

Recall that we have fixed a finite extension $L$ of $\Q_p$ with ring of integers $\Ocal$ and uniformizer $\varpi_L$. If $H=\prod_v H_v\subset G(\Abb_{F,f})$ is a compact open subgroup such that $H_v\subset \GL_d(F_v)$ for $v|p$, we can define, for $W_0$ a finite $\Ocal$-module with a continuous action of $\GL_d(\Ocal_F\otimes\Z_p)$, {the space of automorphic forms of level $H$ and weight $W_0$ by}
\begin{equation*}
S_{W_0}(H,\mathcal{O})=\{f:\, G(F)\backslash G(\mathbb{A}_F^{\infty})\rightarrow W_0\mid f(gh)=h^{-1}f(g)\ \text{for all}\ h\in H\}.
\end{equation*}
If $H^{v_0}=\prod_{v\neq v_0} H^v$, we can define
\begin{equation*}
S_{W_0}(H^{v_0},\Ocal)=\varinjlim_{H_{v_0}\subset G(\Ocal_{F_{v_0}})}S_{W_0}(H^{v_0}H_{v_0},\mathcal{O}),
\end{equation*}
where the limit is taken over all compact open subgroups of $G(\Ocal_{F_{v_0}})$.

Let $\hat S_{W_0}(H^{v_0},\mathcal{O})$ be the $\varpi_L$-adic completion of $S_{W_0}(H^{v_0},\mathcal{O})$. When $W_0$ is the trivial representation, we omit it from the notation.

If ${{\bf n}}$ is a dominant algebraic weight, we write $\mathbb{W}_{{{\bf n}}}$ for the irreducible representation of $\Gbb_{\bar\Q_p}=(\Res_{K/\Q_p}\GL_d)_{\bar\Q_p}$ of highest weight ${{\bf n}}$ relatively to our choice of Borel subgroup. 
Note that this representation is already canonically defined over the reflex field $E_{{{\bf n}}}$ of the weight ${{\bf n}}$ and hence especially over our fixed field $L$, because we assumed that $L$ contains all the Galois conjugates $\sigma(K)$ of $K$ inside $\bar\Q_p$. 
Finally we write $W_{{{\bf n}}}$ for the representation of $\GL_d(K)$ or $\GL_d(\Ocal_K)$ given by composing the embedding 
\[\begin{aligned}\GL_d(K)&\longrightarrow \prod\nolimits_\sigma \GL_d(L)=(\Res_{K/\Q_p}\GL_d)(L)\\ x&\longmapsto (\sigma(x))_\sigma.\end{aligned}\]
with the evaluation of $\mathbb{W}_{{{\bf n}}}$ on $L$-valued points (and similar for its restriction to $\GL_d(\Ocal_K)$).

Recall that $W_\infty$ is the representation of $\prod_{v\in S'_p}\prod_{w\in I_\infty(v)}G(F_v)$ fixed in section \ref{eigenvarieties}. Using $\iota_p$ and $\iota_\infty$ and choosing $L$ big enough, we can view $W_\infty$ as a representation of $\prod_{v\in S_p'}G(F_v)$ and put an $L$-structure on it. Let us write $\hat S_{\bf{n}}(H^{v_0},L)=\hat S_{W^0}(H^{v_0},\Ocal)\otimes_{\Ocal} L$, where $W^0$ is a stable $\Ocal_L$-lattice of the representation $W_{\bf n}\otimes_{\Ocal_L}W_\infty$ of $G(\Ocal_F\otimes\Z_p)=\GL_d(\Ocal_K)\times\prod_{v\in S'_p}G(\Ocal_{F_v})$.

If $H$ is a compact open subgroup of $G(\mathbb{A}_f)$ we write $\Hcal(H)$ for the image of $\Hcal^{\rm un}$ in $\mathrm{End}(\hat S_{\bf k}(H,L))$. 

Now we fix $H$ as in section \ref{eigenvarieties}, and assume moreover now that all places $v|p$ are split in $E$ and $H_v$ is maximal at theses places\footnote{This restriction is only here to be able to apply the idempotent $e$ at the spaces $S_W(H,L)$.}. Recall that we fixed a Galois representation $\bar\rho$ which is \emph{automorphic} of level $H$, i.e.~there exists $z\in Z$ such that $\bar\rho$ is isomorphic to the reduction mod $\varpi_L$ of $\rho_{\Pi_z}$. Let $\mfrak$ be the maximal ideal of $\Hcal^{\rm un}$ such that for $v\notin S$, the conjugacy class of $\bar\rho({\rm Frob}_v)$ coincides via the Satake correspondence with the morphism $\Hcal(G(F_v),H_v)=\Ocal_L[G(F_v)/\hspace{-.1cm}/H_v]\rightarrow\Hcal(H)/\mfrak\simeq k_L$. 

Given a compact open subgroup $H_{v_0}\subset G(\Ocal_{F_{v_0}})$ we write $\Hcal_{\mfrak}(H_{v_0})$ for the image of $\Hcal^{\rm un}_{\mfrak}$ in $\mathrm{End}(S(H^{v_0}H_{v_0},\Ocal)_{\mfrak})$.
Then there is a unique continuous map \[\theta:\,R_{\bar\rho}\longrightarrow\Hcal_\mfrak(H_{v_0})\] with the following property: Given a morphism $\psi:\,\Hcal_\mfrak(H_{v_0})\rightarrow\bar\Q_p$ of $\Ocal_L$-algebras, the deformations $\rho$ of $\bar\rho$ corresponding to $\psi\circ\theta$ are such that for $v\notin S$, the conjugacy class of $\rho({\rm Frob}_v)$ coincides via the Satake correspondence with the morphism \[\Hcal(G(F_v),H_v)\longrightarrow\Hcal_\mfrak(H_{v_0})\xrightarrow{\psi}\bar\Q_p.\] By unicity, these maps glue into a map \[\theta:R_{\bar\rho}\longrightarrow \varprojlim_{H_{v_0}}\Hcal_{\mfrak}(H_{v_0})\] giving a continuous action of $R_{\bar\rho}$ on $\hat S(H^{v_0},\Ocal)_{\mfrak}$.


Now we can fix an idempotent $e$ as in section \ref{eigenvarieties} such that $e{S(H^{v_0},L)}_\mfrak\neq0$. We will prove that if an element $t\in R_{\bar\rho}$ vanishes on $e\hat S(H^{v_0},L)_{\mfrak}$, then it vanishes on $Y_{\bar\rho}=Y(W_\infty,S,e)_{\bar\rho}$ too.

\begin{lem}\label{Wout}
{If ${\bf n}$ is a dominant algebraic weight, }there exists a $\GL_d(\Ocal_K)\times\Hcal^{\rm un}$-equivariant homeomorphism
\begin{equation*}
\hat S_{{{\bf n}}}(H^{v_0},L)_{\mfrak}\simeq W_{{{\bf n}}}\otimes_L\hat S(H^{v_0},L)_{\mfrak},
\end{equation*}
supposing that $\Hcal^{\rm un}$ acts trivially on $W_{{{\bf n}}}$.
\end{lem}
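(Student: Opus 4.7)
The plan is to construct the required homeomorphism directly from the definitions and then verify its equivariance, relying on the key fact that the tame level $H^{v_0}$ acts trivially on the weight-$\mathbf n$ factor $W^0_\mathbf n$, affecting only the $W^0_\infty$ component of the coefficients.

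First I would identify $\hat S_{W^0_\mathbf n\otimes W^0_\infty}(H^{v_0},\Ocal)$ with the space of continuous sections over the profinite space $G(F)\backslash G(\Abb_F^\infty)/H^{v_0}$ of the local system determined by the right $H^{v_0}$-representation on $W^0_\mathbf n\otimes W^0_\infty$. The containment in continuous equivariant sections is clear by definition. Conversely, any continuous equivariant $F$ is approximated by smooth sections $F_n$ of level $H^{v_0}H_{v_0,n}$ obtained by prescribing $F_n:=F$ on a fundamental domain for $H_{v_0,n}$ and extending by equivariance. The uniform convergence $F_n\to F$ follows from the estimate
\[ |F_n(gh)-F(gh)|=|h^{-1}F(g)-F(gh)|\leq |F(g)-h^{-1}F(g)|+|F(gh)-F(g)|, \]
both terms of which vanish as $H_{v_0,n}\to 1$ by continuity of $F$ and by $p$-adic continuity of the algebraic $\GL_d(\Ocal_K)$-action on the stable lattice $W^0_\mathbf n$. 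Hence the $H_{v_0}$-equivariance is dissolved in the $\varpi_L$-adic completion, leaving only the $H^{v_0}$-equivariance.

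Second, since $H^{v_0}$ acts trivially on the $W^0_\mathbf n$-factor, the local system splits as the trivial $W^0_\mathbf n$-bundle tensored with the local system attached to $W^0_\infty$. Taking continuous sections yields the canonical $\Ocal$-module isomorphism
\[ \hat S_{W^0_\mathbf n\otimes W^0_\infty}(H^{v_0},\Ocal)\;\simeq\; W^0_\mathbf n\otimes_{\Ocal}\hat S_{W^0_\infty}(H^{v_0},\Ocal),\quad w\otimes f\mapsto\bigl(g\mapsto w\otimes f(g)\bigr), \]
which becomes the desired homeomorphism after inverting $\varpi_L$; bijectivity is a direct check using any $\Ocal$-basis of $W^0_\mathbf n$. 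One then checks that the natural $\GL_d(\Ocal_K)$-action on the left, namely $(\gamma F)(g)=\gamma\cdot F(g\gamma)$, corresponds under the isomorphism to the diagonal action on the right, using that $\gamma\in\GL_d(\Ocal_K)\subset G(F_{v_0})$ acts algebraically on $W^0_\mathbf n$, trivially on $W^0_\infty$, and commutes with $H^{v_0}$ in $G(\Abb_F^\infty)$. The algebra $\Hcal^{\rm un}$ acts via Hecke operators at places outside $S$, so by hypothesis it touches only the $\hat S$-factor. Localization at $\mathfrak m$ is an exact functor on continuous $\Hcal^{\rm un}$-modules and commutes with the whole construction, giving the stated isomorphism after passing to $\mathfrak m$-parts.

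The main obstacle is the density argument of the first step, which identifies the $\varpi_L$-adic completion with the full space of continuous sections of the $H^{v_0}$-local system. It requires the simultaneous use of uniform continuity of the section $F$ and of $p$-adic continuity of the algebraic $\GL_d(\Ocal_K)$-action on $W^0_\mathbf n$, and makes essential use of the fact that $W^0_\mathbf n$ is a $\GL_d(\Ocal_K)$-stable lattice in an algebraic representation.
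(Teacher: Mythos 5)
Your proposal is correct but takes a genuinely different route from the paper's. The paper's proof is a purely algebraic chain of isomorphisms: writing $\hat S$ as $\varprojlim_n$ of $\Ocal/\varpi_L^n$-level spaces, commuting $\otimes W^0_{{\bf n}}$ through $\varprojlim_n$ and $\varinjlim_{H_{v_0}}$ (using that $W^0_{{\bf n}}/\varpi_L^n$ is finite and the coset spaces are finite), and absorbing $W^0_{{\bf n}}/\varpi_L^n$ into the coefficients at finite level because $H_{v_0}$ acts trivially on it once $H_{v_0}$ is small enough relative to $n$. You instead first establish the intrinsic characterization of $\hat S_{W^0}(H^{v_0},\Ocal)$ as the Banach module of continuous $H^{v_0}$-equivariant $W^0$-valued functions (via a density-of-smooth-vectors argument), and then split the coefficients using that $H^{v_0}$ does not see $W^0_{{\bf n}}$. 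Both hinge on the same insight --- the $v_0$-level dissolves under completion while the tame level is blind to the $W_{{\bf n}}$-factor --- but the paper's version stays finite-level and avoids the separate density step, while yours produces a reusable Banach-theoretic description that the paper in fact invokes in the very next proposition when it identifies $\hat S(H^{v_0},L)$ with continuous $W_\infty$-valued functions on $G(F)\backslash G(\Abb_{F,f})/H^{v_0}$. One point to tighten: the fundamental domain should be for $H^{v_0}H_{v_0,n}$, not just $H_{v_0,n}$, and the estimate should be run for $h=h_{v_0}\in H_{v_0,n}$ with $g$ ranging over $g_ih^{v_0}$; the $H^{v_0}$-equivariance of both $F$ and $F_n$ is automatic and the $H^{v_0}$-action is norm-preserving on the stable lattice, so no contribution from $H^{v_0}$ enters the bound, and finiteness of the $H^{v_0}H_{v_0,n}$-orbit set gives uniformity.
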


\begin{proof}
It is sufficient to prove it before the localization in $\mfrak$, by $\Hcal^{\rm un}$-equivariance. Then we can use the following list of $\GL_d(\Ocal_K)\times\Hcal^{\rm un}$-equivariant isomorphisms
\begin{align*}
W^0_{{{\bf n}}}\otimes_{\Ocal_L} \hat S(H^{v_0},\Ocal)&=\varprojlim_n(W^0_{{{\bf n}}}/\varpi_L^n)\otimes_{\Ocal_L}S(H^{v_0},\mathcal{O}/\varpi_L^n))\\
&=\varprojlim_n(W^0_{{{\bf n}}}/\varpi_L^n)\otimes_{\Ocal_L}(\varinjlim_{H_{v_0}}S(H_{v_0}H^{v_0},\mathcal{O}/\varpi_L^n))\\
&=\varprojlim_n \varinjlim_{H_{v_0}}(S_{W^0_{{{\bf n}}}/\varpi_L^n}(H_{v_0}H^{v_0},\mathcal{O}/\varpi_L^n)\\
&=\hat S_{{{\bf n}}}(H^{v_0},\mathcal{O}_L).
\end{align*}
\end{proof}


%

\begin{prop}
The $\GL_d(\mathcal{O}_{K})$-representation $\hat S(H^{v_0},\mathcal{O})_{\mfrak}$ is isomorphic to a direct factor of $C(G(\mathcal{O}_{K}),L)^r$ for some $r\geq0$, where $C(G(\Ocal_K),L)$ denotes the space of continuous $L$-valued functions on $G(\Ocal_K)$.
\end{prop}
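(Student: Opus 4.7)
The plan is to identify $\hat S(H^{v_0},\Ocal)$ itself, before localizing at $\mfrak$, with a finite direct sum of copies of the regular representation of $\GL_d(\Ocal_K)$ on continuous $\Ocal$-valued functions, and only then to extract $\hat S(H^{v_0},\Ocal)_\mfrak$ as a direct summand.

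First I would use that, since $G(F\otimes_\Q\R)$ is compact, the double coset set $G(F)\backslash G(\Abb_F^\infty)/H$ is finite for every compact open subgroup $H\subset G(\Abb_F^\infty)$. Applied to $H:=H^{v_0}\GL_d(\Ocal_K)$ this yields coset representatives $g_1,\dots,g_r$. The quotient
\[X:=G(F)\backslash G(\Abb_F^\infty)/H^{v_0}\]
carries a right action of $\GL_d(\Ocal_K)=G(\Ocal_{F_{v_0}})$ via the $v_0$-component, and the canonical projection $X\twoheadrightarrow X/\GL_d(\Ocal_K)=G(F)\backslash G(\Abb_F^\infty)/H$ identifies the set of orbits with $\{g_1,\dots,g_r\}$. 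A short computation shows that the stabilizer in $\GL_d(\Ocal_K)$ of the class of $g_i$ is contained in $g_{i,v_0}^{-1}\bigl(G(F)\cap g_iHg_i^{-1}\bigr)g_{i,v_0}$, and this is trivial by the smallness assumption on $H$ in item (vi) of Notation \ref{Notationseigenvar}. Each orbit is therefore a free $\GL_d(\Ocal_K)$-torsor, so there is a $\GL_d(\Ocal_K)$-equivariant identification
\[X\cong\bigsqcup_{i=1}^r\GL_d(\Ocal_K).\]

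Passing to locally constant $\Ocal$-valued functions gives $S(H^{v_0},\Ocal)=\bigoplus_{i=1}^r C^{\rm lc}(\GL_d(\Ocal_K),\Ocal)$, and since $\GL_d(\Ocal_K)$ is profinite the $\varpi_L$-adic completion of $C^{\rm lc}(\GL_d(\Ocal_K),\Ocal)$ is $C(\GL_d(\Ocal_K),\Ocal)$. Taking the $\varpi_L$-adic completion therefore produces a $\GL_d(\Ocal_K)$-equivariant topological isomorphism
\[\hat S(H^{v_0},\Ocal)\cong C(\GL_d(\Ocal_K),\Ocal)^r,\]
where $\GL_d(\Ocal_K)$ acts on each summand by the right regular representation.

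Finally I would cut out $\hat S(H^{v_0},\Ocal)_\mfrak$ as a direct factor by means of the spherical Hecke algebra $\Hcal^{\rm un}$, which acts on $\hat S(H^{v_0},\Ocal)$ by $\Ocal$-linear endomorphisms commuting with the right $\GL_d(\Ocal_K)$-action. At each finite level $\hat S(H^{v_0}H_{v_0},\Ocal)$, which is a finitely generated $\Ocal$-module, the image of $\Hcal^{\rm un}$ in the endomorphism ring is a finite, semi-local $\Ocal$-algebra; the idempotents cutting out the individual maximal ideals are compatible with the transition maps as $H_{v_0}$ shrinks, and in the limit they define an idempotent in $\mathrm{End}_{\GL_d(\Ocal_K)}(\hat S(H^{v_0},\Ocal))$ whose image is precisely $\hat S(H^{v_0},\Ocal)_\mfrak$. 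Combined with the previous step, this exhibits $\hat S(H^{v_0},\Ocal)_\mfrak$ as a $\GL_d(\Ocal_K)$-equivariant direct summand of $C(\GL_d(\Ocal_K),\Ocal)^r\subset C(\GL_d(\Ocal_K),L)^r$. The step I expect to require the most care is the compatibility of the Hecke idempotents at finite levels under the transition maps, which reduces to the standard semi-locality of the completed Hecke algebra with maximal ideals parametrized by residual Galois pseudo-characters.
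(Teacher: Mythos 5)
Your proposal follows the same route as the paper's own proof: finiteness of $G(F)\backslash G(\Abb_F^\infty)/H$ plus the smallness hypothesis $G(F)\cap g H g^{-1}=\{1\}$ identifies $G(F)\backslash G(\Abb_F^\infty)/H^{v_0}$ with $r'$ disjoint copies of $\GL_d(\Ocal_K)$ acted on by right translation, and localization at $\mfrak$ is then cut out by an idempotent coming from the Hecke algebra, which commutes with the $\GL_d(\Ocal_K)$-action. Two small remarks. First, the paper's notational convention is that $\hat S(H^{v_0},L)$ is really $\hat S_{{\bf 0}}(H^{v_0},L)$, i.e.\ continuous functions valued in (a lattice of) $W_\infty$, which is why they end up with $r = r'\dim W_\infty$ rather than $r=r'$; you treat it as $\Ocal$-valued, which does not affect the truth of the statement (``for some $r$'') but slightly misreads the definition. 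Second, you supply more detail than the paper on why the Hecke idempotent exists and is compatible with the $\GL_d(\Ocal_K)$-action across the tower of levels $H_{v_0}$ --- the paper simply invokes the commutation of $\varprojlim_{H_{v_0}}\Hcal(H_{v_0},\Ocal_L)$ with $\GL_d(\Ocal_K)$. That extra explanation is correct and arguably clarifies what the paper leaves implicit.
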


\begin{proof}
Using Lemma \ref{Wout}, it is sufficient to prove it when ${{{\bf n}}}={\bf 0}$. In this case we remark that the Banach space ${\hat S(H^{v_0},L)=}\hat S_{\bf 0}(H^{v_0},L)$ is the Banach space of continuous functions $G(F)\backslash G(\Abb_{F,f})/H^{v_0}\rightarrow W_\infty$. Let $g_1,\dots,g_{r'}\in G(\Abb_{F,f})$ be a set of representatives of $G(F)\backslash G(\Abb_{F,f})/H$. We have $G(F)\cap g_iHg_i^{-1}=\{1\}$ for each $i$, proving that $G(F)\backslash G(\Abb_{F,f})/H^{v_0}$ is isomorphic to $\GL_d(\Ocal_K)^{r'}$. This proves that ${\hat S(H^{v_0},L)}$ is $\GL_d(\Ocal_K)$-equivariantly isomorphic to $C(\GL_d(\Ocal_K),L)^r$ with $r=r'\dim W_\infty$. Using the fact that $\varprojlim_{H_{v_0}}\Hcal(H_{v_0},\Ocal_L)$ and its action on $\hat S(H^{v_0},L)$ commutes to $\GL_d(\Ocal_K)$, we can conclude that ${\hat S(H^{v_0},L)}_{\mfrak}$ is isomorphic to a direct factor of $C(\GL_d(\Ocal_K),L)^r$.
\end{proof}

By Lemma \ref{Wout}, there is an $\Hcal^{\rm un}$-equivariant isomorphism
\[S_{{{\bf n}}}(H,L)_{\mfrak}\simeq {\Hom}_{\GL_d(\Ocal_K)}(W_{{{\bf n}}}^*,\hat S(H^{v_0},L)_{\mfrak}).\] This implies that if $t\in R_{\bar\rho}$ vanishes on ${\hat S}(H^{v_0},L)_{\bar\rho}$ it will vanish at each point of $Z\subset Y_{\bar\rho}$. These points being Zariski-dense in $Y$, the function $t$ vanishes on $Y_{\bar\rho}$.

%

\subsection{A density result for the eigenvariety}

Now we fix ${\bf k}$ a strongly dominant weight. We say that a closed point $y\in Y_{\bar\rho}$ is crystabelline of Hodge-Tate weights ${\bf k}$ if its image in $\Xfrak(\bar\rho)$ is crystalline on an abelian extension of $K$ and its Hodge-Tate weights are given by ${\bf k}$. The purpose of this section is to prove that if $t\in R_{\bar\rho}$ vanishes on the subset of points of $Y_{\bar\rho}$ which are crystabelline of Hodge-Tate weights ${\bf k}$, then $t$ vanishes on $Y_{\bar\rho}$.

Recall that we have fixed a Borel subgroup $B\subset \GL_d(K)$ and let us write $N\subset B$ for its unipotent radical. Further we write $N_0=N\cap \GL_d(\Ocal_K)$.

%
%
%
%

\begin{defn}
Let $\Pi$ be an irreducible smooth representation of $G(F_{v_0})$. We say that $\Pi$ has finite slope if the operator $U_{v_0}$ has a non zero eigenvalue on the space $\Pi^{N_0}$. If $\Pi$ is an irreducible automorphic representation of $G(\mathbb{A}_F)$, we say that $\Pi$ has finite slope if $\Pi_{v_0}$ has finite slope as a smooth representation of $G(F_{v_0})$.
\end{defn}

The following result tells us that finite slope automorphic representations of $G(\mathbb{A}_F)$ give rise to closed points of $Y_{\bar\rho}$.

\begin{prop}
Let $\Pi$ be an irreducible automorphic representation of $G(\mathbb{A}_F)$ of finite slope {whose isomorphism class lies in $\Acal(W_\infty,S,e)$}. Then there exists a point $z\in {Y(W_\infty,S,e)}$ such that $\psi_z|_{\Hcal^{\rm un}}=\psi_{\Pi}|_{\Hcal^{\rm un}}$. Moreover, if $\bigotimes_{w\in I_{\infty}(v_0)}\Pi_w$ is isomorphic to $W_{{{\bf n}}}$, then the Galois-representation attached to $z$ becomes semi-stable of weight ${\bf k}{=(k_{\sigma,i})}$, when restricted to the Galois group of an abelian extension of $K$, {where $k_{\sigma,i}=n_{\sigma,i}-(i-1)$}. If moreover, for $\eta_i=\omega_{Y,i}(z)\delta_{\Wcal}((k_{\sigma,i}))^{{-1}}$, we have $\eta_i\neq\eta_j$ for $i\neq j$ then this Galois representation is even potentially crystalline of weight ${\bf k}$.
\end{prop}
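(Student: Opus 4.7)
The plan is to establish the three claims in sequence, each drawing on a separate ingredient.

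For the existence of the point $z \in Y(W_\infty,S,e)$ with the asserted matching of unramified Hecke eigenvalues, I will invoke Proposition \ref{points}. The finite slope assumption on $\Pi$ provides a non-zero eigenvalue of the operator $U_{v_0}$ on the space of Iwahori-fixed vectors $\Pi_{v_0}^{N_0}$. Using the classical-to-overconvergent embedding of algebraic $p$-adic automorphic forms of weight $W_{{\bf n}}$ into the overconvergent Banach space $\varinjlim_{V,r} e\Scal(V,r)$, the element furnished by $\Pi \in \Acal(W_\infty,S,e)$ (via $e(\Pi_f)^{H'_{v_0}} \neq 0$) yields a non-zero $U_{v_0}$-finite eigenvector for the full Hecke algebra $\Hcal$. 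Proposition \ref{points} then produces a point $z \in Y$ whose system of Hecke eigenvalues agrees with that of $\Pi$, so in particular $\psi_z|_{\Hcal^{\rm un}} = \psi_\Pi|_{\Hcal^{\rm un}}$.

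For the second assertion I will use local-global compatibility at $v_0$ as recorded in \cite[Theorem 3.2]{Ch3}: the Weil-Deligne representation $\WD(\rho_\Pi|_{\Gcal_{w_0}})$ is isomorphic to the local Langlands parameter of $\Pi_{v_0}|\det|^{(1-d)/2}$. Since $\Pi_{v_0}$ is Iwahori-spherical, it is a subquotient of a normalized principal series $\mathrm{Ind}_B^{\GL_d(K)}(\tilde\chi_1,\dots,\tilde\chi_d)$ for smooth characters $\tilde\chi_i$ of $K^\times$. Smoothness forces each $\tilde\chi_i|_{\Ocal_K^\times}$ to factor through a finite quotient, so $\rho_\Pi|_{\Gcal_{w_0}}$ becomes semi-stable upon restriction to the Galois group of the abelian extension of $K$ cut out by $\bigcap_i \ker(\tilde\chi_i|_{\Ocal_K^\times})$. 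The labeled Hodge-Tate weights are read off from the algebraic part of $\tilde\chi$; tracking the conventions for the half-integer twist $|\det|^{(1-d)/2}$ and for the sign of Hodge-Tate weights gives $k_{\sigma,i} = n_{\sigma,i} - (i-1)$, in agreement with the definitions of $\kappa$ and $\omega_Y$ in Section \ref{eigenvarieties}.

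For the third claim I must show that $\eta_i \neq \eta_j$ for all $i\neq j$ forces the monodromy operator $N$ on $\WD(\rho_\Pi|_{\Gcal_{w_0}})$ to vanish. Under local-global compatibility and local class field theory, the semi-simplified Frobenius eigencharacters $\tilde\chi_i$ restricted to the inertia subgroup correspond to $\tilde\chi_i|_{\Ocal_K^\times} = \eta_i \cdot \delta_{\Wcal}((k_{\sigma,i}))$; here $\eta_i$ is of finite order by smoothness, while $\delta_{\Wcal}((k_{\sigma,j}-k_{\sigma,i}))$ is algebraic and so has infinite order whenever nontrivial. Consequently the hypothesis $\eta_i\neq\eta_j$ forces the inertia characters $\tilde\chi_i|_{I_K}$ to be pairwise distinct. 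The commutation relation $N\rho(w) = |w|\rho(w)N$ for $w \in W_K$ then prevents $N$ from intertwining any two inertia eigenspaces, so $N=0$ and $\rho_\Pi|_{\Gcal_{w_0}}$ is potentially crystalline of weight ${\bf k}$.

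The principal difficulty I anticipate is carefully tracking the various normalizations -- the half-integral twist $|\det|^{(1-d)/2}$ in the local Langlands correspondence, the sign convention for Hodge-Tate weights fixed in the introduction, and the shift by $i-1$ appearing in the definition of $\omega_{Y,i}$ -- so as to identify the inertia characters with $\eta_i\cdot\delta_{\Wcal}((k_{\sigma,i}))$ and to recover the exact formula $k_{\sigma,i}=n_{\sigma,i}-(i-1)$. A secondary point requiring care is to check that the classical-to-overconvergent embedding lands in the finite slope part for our $\Pi$, so that Proposition \ref{points} genuinely applies.
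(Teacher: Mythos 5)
Your proof of the first claim (existence of $z$) follows the same route as the paper: both produce a $U_{v_0}$-finite eigenvector in the relevant overconvergent Banach space and invoke Proposition~\ref{points}; the paper makes the classical-to-overconvergent identification explicit via $e\hat S(H^{v_0},L)^{N_0}[\chi]\simeq\varinjlim_r e\Scal(\chi,r)$ from Loeffler's Proposition~3.10.1, which is the precise statement you gesture at with the phrase ``classical-to-overconvergent embedding.''

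For the second and third claims you take a genuinely different route. The paper stays entirely inside the local $(\varphi,\Gamma)$-module machinery developed in Section~2: it pushes $z$ through the map $f:Y_{\bar\rho}\to X(\bar\rho_{w_0})$ of Theorem~\ref{maptofinslopespace} and then uses the fact that trianguline $(\varphi,\Gamma)$-modules whose parameters restrict to a strongly dominant locally algebraic weight on $\Ocal_K^\times$ are automatically de~Rham and become semi-stable over an abelian extension (Proposition~\ref{deRham} and Corollary~\ref{Zopeninfib}~(ii)). In contrast you appeal to local-global compatibility at $v_0$ from~\cite[Theorem 3.2]{Ch3} to read the Weil-Deligne representation of $\rho_\Pi|_{\Gcal_{w_0}}$ directly off the principal series containing $\Pi_{v_0}$. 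Both paths are legitimate; the paper's has the advantage of being a purely local argument that does not require invoking the monodromy part of local-global compatibility at $p$, which is the more delicate part of Chenevier's statement and which your argument relies on. For the third claim, once the inertia characters of $\WD(\rho_\Pi|_{\Gcal_{w_0}})$ are identified, the final step (distinct inertia characters force $N=0$ because $N$ commutes with inertia and is nilpotent on one-dimensional eigenspaces) is the same in both arguments. One small slip in your write-up: you write $\tilde\chi_i|_{\Ocal_K^\times}=\eta_i\cdot\delta_\Wcal((k_{\sigma,i}))$, but the $\tilde\chi_i$ are smooth so their restriction to $\Ocal_K^\times$ has finite order; the correct identification (and what the rest of your argument actually uses) is $\tilde\chi_i|_{\Ocal_K^\times}=\eta_i$, with $\delta_\Wcal((k_{\sigma,i}))$ accounting for the algebraic part of $\omega_{Y,i}(z)$.
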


\begin{proof}
By assumption, there exists $f\in{e\hat S(H^{v_0},L)}^{N_0}$ which is an eigenvector of $\Hcal\times L[T^0]$ such that the character of $\Hcal$ is $\psi_{\Pi}$ and the eigenvalue of $U_{v_0}$ is non zero. Let $\chi$ be the character of $T^0$ giving the action of $T^0$ on $f$. By \cite[Proposition 3.10.1]{Loeffleroverconvergent}, we have
\begin{equation*}
e{\hat S(H^{v_0},L)}^{N_0}[\chi]\simeq\varinjlim_r e\Scal(\chi,r)
\end{equation*}
By Proposition \ref{points}, there exists a point $z$ of $Y({W_\infty,S,e})$ such that $\psi_z|_{\Hcal^{\rm un}}=\psi_{\Pi}|_{\Hcal^{\rm un}}$. The claim about semi-stability after an abelian extension follows easily using the map to the finite slope space and the fact that the fibers over strongly dominant locally algebraic characters have this property. By Theorem \ref{maptofinslopespace}, the character $(\eta_i)_i$ gives the action of the inertia on the Weil-Deligne module of this Galois representation, which is non monodromic under the last assumption of the proposition.
\end{proof}

{Let $\bf{k}$ and $\bf{n}$ as in the proposition.} This proposition shows us that if we want to prove that an element $t\in\Hcal^{\rm un}$ vanishing on all crystabelline points of type ${\bf k}$ of $Y{(W_\infty,S,e)}_{\bar\rho}$ is zero, it is sufficient to prove the following: An element $t\in\Hcal^{\rm un}$ vanishing on ${e}S_{{\bf n}}(H^{v_0},L)_{\mfrak}[\Pi_f]$ for all irreducible automorphic representations $\Pi$ of finite slope such that $e\Pi_f\neq0$, satisfies $t=0$ on $e\hat S_{{\bf }}(H^{v_0},L)_\mfrak$.

To produce sufficiently automorphic finite slope representations we can use the following result. Now we write $I_n$ for the level $n$ Iwahori subgroup of $\GL_d(\Ocal_K)$ i.e.~the group of elements of $\GL_d(\Ocal_K)$ such that the entries below the diagonal are divisible by $\varpi^n$, and $B_0=B\cap I_n$ and $N_0=N\cap B_0$. Recall that the level of a smooth character $\chi:\,\mathcal{O}_K^{\times}\rightarrow\mathbb{C}^{\times}$ is the least integer $n$ such that $1+\varpi^{n+1}\mathcal{O}_K$ is contained in $\ker(\chi)$.

\begin{prop}
Let $\chi=\bigotimes_{i=1}^{d}\chi_i$ be a smooth character of $T^0$ such that for $1\leq i \leq n-1$, the level of $\chi_i$ is strictly bigger than the level of $\chi_{i+1}$, then there exists an open subgroup $I(\chi)$ such that $I(\chi)=(I(\chi)\cap\bar{N})T^0(I(\chi)\cap N)$, $I(\chi)\cap B=B^0$ and if we write $\chi$ for the composite $I(\chi)\rightarrow T^0\rightarrow\mathbb{C}^{\times}$, then the pair $(I(\chi),\chi)$ is a type for the inertial conjugacy class of $(T,\chi)$, more precisely, if $\pi$ is a smooth irreducible representation of $\mathrm{GL}_d(K)$, then \[\Hom_{I(\chi)}(\chi,\pi)\neq 0\Longleftrightarrow \pi\cong\mathrm{Ind}_B^{\mathrm{GL}_d(K)}(\eta)\] with $\eta$ a character of $T$ such that $\eta|_T=\chi$. Moreover, in this case, $\pi$ has finite slope.
\end{prop}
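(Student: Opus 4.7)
My approach is to apply the theory of Bushnell--Kutzko types for principal series of $\GL_d(K)$: construct $I(\chi)$ explicitly, verify the listed structural properties, identify $(I(\chi),\chi)$ as a type for the Bernstein component $[T,\chi]_G$, and then deduce finite slope directly.

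Let $n_i\geq 0$ denote the level of $\chi_i$, so by hypothesis $n_1>n_2>\dots>n_d$. I would define $I(\chi)$ as the subgroup of $\GL_d(\Ocal_K)$ consisting of matrices $g=(g_{ij})$ with $g_{ii}\in \Ocal_K^\times$, $g_{ij}\in \Ocal_K$ for $i<j$, and $g_{ij}\in \varpi^{n_j-n_i+1}\Ocal_K$ for $i>j$ (possibly with a shift by an absolute constant depending on the ramification of $K$). Since $n_j-n_i+1>0$ for $i>j$, properties (i) and (ii) follow at once from the explicit description: the decomposition $I(\chi)=(I(\chi)\cap \overline{N})T^0(I(\chi)\cap N)$ is read off the three block regions, and $I(\chi)\cap B=T^0\cdot N_0=B^0$ because the upper triangular conditions are unrestrictive. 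One then extends $\chi$ to $I(\chi)$ by declaring it trivial on $I(\chi)\cap N$ and $I(\chi)\cap \overline{N}$; checking that this is a group homomorphism reduces to a commutator computation in which the exponents $n_j-n_i+1$ exceed the conductors of both $\chi_i$ and $\chi_j$, so that the commutators of off-diagonal unipotent elements project, on each $T^0$-coordinate, into $1+\varpi^{n_i+1}\Ocal_K\subset\ker(\chi_i)$.

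The main obstacle is proving that $(I(\chi),\chi)$ is a type for $[T,\chi]_G$. The cleanest route is to invoke Roche's construction of principal series types for split reductive groups, which applies here since $p\nmid 2d$ excludes the residue characteristic obstructions in Roche's hypotheses, and in which precisely such explicit subgroups appear. Alternatively, one verifies the Bushnell--Kutzko cover criterion: show that for every smooth representation $\pi$ of $\GL_d(K)$ the natural map
\[
\Hom_{I(\chi)}(\chi,\pi)\longrightarrow \Hom_{T^0}(\chi,\pi_N),
\]
induced by the projection to the Jacquet module $\pi_N$, is an isomorphism. Combined with Frobenius reciprocity for parabolic induction, this identifies nonvanishing of the left hand side with $\pi$ being a subquotient of $\mathrm{Ind}_B^{\GL_d(K)}(\eta)$ for some smooth character $\eta$ of $T$ extending $\chi$ from $T^0$, which is exactly the inertial conjugacy class statement.

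For finite slope: if $\pi\cong \mathrm{Ind}_B^{\GL_d(K)}(\eta)$ with $\eta|_{T^0}=\chi$, the space $\pi^{N_0}$ is finite dimensional and decomposes under $T^0$ according to the Weyl orbit of $\chi$. By Jacquet--Frobenius reciprocity, the $\chi$-isotypic line contains a vector on which the monoid $T^-$, and in particular the operator $U_{v_0}$ corresponding to $\mathrm{diag}(\varpi^{d-1},\varpi^{d-2},\dots,1)$, acts by the nonzero scalar $\eta(\mathrm{diag}(\varpi^{d-1},\dots,1))$. Hence $\pi$ has finite slope.
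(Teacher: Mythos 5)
Your proposal correctly identifies the overall strategy — construct an explicit compact open group $I(\chi)$ refining the Iwahori, extend $\chi$ to it, show $(I(\chi),\chi)$ is a principal series type, and deduce finite slope — but it leaves the genuine content of the proof as a black box. The paper's proof is self-contained: it verifies the Bushnell--Kutzko characterization directly, and the real work is in condition (iii), i.e., invertibility of the Hecke operator $f_z$ supported on $I(\chi) z I(\chi)$ for $z = \mathrm{diag}(\varpi^{d-1},\dots,1)$. This is reduced to the key lemma that \emph{any element of $N$ intertwining $\chi$ lies in $N_0$}, which is then checked by reduction to the $2\times2$ case and an explicit computation exploiting the strict inequality of levels $n_1 > n_2 > \cdots > n_d$. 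None of this appears in your argument: you either cite Roche outright, or you describe the cover criterion you would need to verify without verifying it. Invoking Roche is in principle legitimate under the standing hypothesis $p\nmid 2d$, but that is a genuinely different route and it requires checking that your group actually agrees with Roche's (whose conductors are determined by the roots $e_i - e_j$ and a specific concave function, not by the column index alone), which you do not do.

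There is also a concrete discrepancy in the construction itself. The paper takes $I(\chi)\subset I$ to be the set of matrices $(a_{ij})$ with $\varpi^{n_j}\mid a_{ij}$ for $i>j$, whereas you take $\varpi^{n_j-n_i+1}\mid g_{ij}$. These are different subgroups (neither contains the other in general: your condition is stricter in the bottom row when $n_d=0$ and weaker elsewhere), and your hedge ``possibly with a shift by an absolute constant'' signals exactly the uncertainty that the type verification must resolve — the well-definedness of the extended character $\chi$ on $I(\chi)$ and the intertwining computation are sensitive to this choice. Since the proposition only claims existence of some $I(\chi)$, a different group is not inherently wrong, but you must then prove \emph{for your group} either the Roche matching or the B--K criterion, and neither is done. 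The finite-slope portion of your argument is essentially the same as the paper's and is fine. In summary: the structural outline is right, but the core step (the intertwining lemma / invertibility of $f_z$) is missing, and the group you write down is not quite pinned down.
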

\begin{proof}
Let $n_i$ be the level of $\chi_i$ and define $I(\chi)$ as the subgroup of $I$ of matrices $(a_{i,j})_{1\leq i,j \leq d}$ such that $\varpi^{n_j}|a_{i,j}$ for $j< i$. It is immediate to check that $\chi$ can be extended to a character of $I(\chi)$. It is enough to prove that $(I(\chi),\chi)$ is a type for the $\GL_d(K)$-inertial equivalence class of $(T,\chi)$. In this aim, we use the characterization of part $2$ of the introduction of \cite{BKsemisimple}. The only non trivial condition is $(iii)$ of loc.~cit. We follow closely the arguments of \cite{BKsemisimple} where the situation is much more general. Let $z$ be the element of $T$ whose diagonal entries are $(\varpi^{n-1},\varpi^{n-2},\dots,\varpi,1)$ and $f_z$ the element of $\mathcal{H}(G,\chi)$ with support $I(\chi)zI(\chi)$ such that $f_z(z)=1$. We only have to prove that $f_z$ is an invertible element of $\mathcal{H}(G,\chi)$. Let $f_{z^{-1}}$ be the element of support $I(\chi)z^{-1}I(\chi)$ such that $f_{z^{-1}}(z^{-1})=1$. We want to prove that $g=f_{z^{-1}}*f_z$ has support in $I(\chi)$. The support of $g$ is contained in $I(\chi)z^{-1}I(\chi)zI(\chi)$. Now remark that 
\[I(\chi)zI(\chi)=\coprod\nolimits_{u\in I(\chi)/(I(\chi)\cap zI(\chi)z^{-1})}uzI(\chi)\]
 and that each class of $I(\chi)$ modulo $I(\chi)\cap zI(\chi)z^{-1}$ contains an element of $N\cap I(\chi)=N_0$, so that $I(\chi)z^{-1}I(\chi)zI(\chi)=I(\chi)z^{-1}N_0zI(\chi)\subset I(\chi)N I(\chi)$. By \cite[Prop. $11.1.2.$]{BH}, it is then sufficient to check that if an element $u\in N$ intertwines the character $\chi$, then $u\in N_0$. We can restrict us to the case $d=2$. Let $u=\left(\begin{smallmatrix}1&x\\0&1\end{smallmatrix}\right)$. Suppose that $x\notin \mathcal{O}_K$ and choose $n=n_1-v(x)$ with $n_i$ the level of $\chi_i$. If $u$ intertwines $\chi$, an easy computation shows us that we must have $\chi_1(a+x\varpi^nc)\chi_2(d-x\varpi^nc)=\chi_1(a)\chi_2(d)$ for each $(a,d,c)\in\mathcal{O}_K^{\times}\times\Ocal_K^\times\times\mathcal{O}_K$. As $n_1 \geq n_2+1$, we have $\chi_2(d-x\varpi^nc)=\chi_2(d)$, so that we have $\chi_1(1+x\varpi^nc)=1$ for all $c\in\mathcal{O}_K$, which contradicts the fact that $n_1=n+v(x)$ is the level of $\chi_1$.
\end{proof}

Let $\Ttil^0$ be the set of smooth characters $T^0\rightarrow\Cbb_p$ of the form $\chi_1\otimes\cdots\otimes\chi_d$ such that the level of $\chi_i$ is strictly bigger than the level of $\chi_{i+1}$ for $1\leq i \leq d-1$.

\begin{prop}\label{prop:gl1}
Let $B$ be the Banach space of continuous function $\mathcal{O}_K^{\times}\rightarrow\mathbb{C}_p$ and for $n\in\mathbb{N}$, let $B_{\geq n}$ denote the subspace generated by the characters $\mathcal{O}_K^{\times}\rightarrow\mathbb{C}_p^{\times}$ of finite level bigger than $n$. Then $B_{\geq n}$ is dense in $B$.
\end{prop}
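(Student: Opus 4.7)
The plan is to reduce the density assertion to showing that for every $x\in \Ocal_K^\times$ the characteristic function $\mathbbm{1}_{xU_n}$ of the coset $xU_n$, with $U_n=1+\varpi^{n+1}\Ocal_K$, lies in the sup-norm closure of $B_{\geq n}$. Granting this, the same argument applied with any $N\geq n$ in place of $n$ yields $\mathbbm{1}_{xU_N}\in \overline{B_{\geq N}}\subset \overline{B_{\geq n}}$; since any $f\in B$ is uniformly continuous on the profinite group $\Ocal_K^\times$, the averages $E_N f$ of $f$ over cosets of $U_N$ converge uniformly to $f$, and each $E_N f$ is a finite linear combination of such coset indicators. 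Hence $f\in \overline{B_{\geq n}}$ and $B_{\geq n}$ is dense in $B$.

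To establish the key claim, I would carry out Fourier inversion on the finite abelian group $G_m=\Ocal_K^\times/U_m$ for an auxiliary integer $m>n$. The orthogonality relations on $G_m$ give
\[\mathbbm{1}_{xU_m}(y)=\tfrac{1}{|G_m|}\sum\nolimits_{\chi\in \hat G_m}\chi(yx^{-1}).\]
Splitting the right-hand side according to whether $\chi$ factors through the further quotient $G_n$ or not, the contribution from characters of level $\leq n$ equals $\tfrac{1}{[U_n:U_m]}\mathbbm{1}_{xU_n}(y)$ by Fourier inversion on $G_n$, while the contribution from $\chi\in \hat G_m\setminus \hat G_n$ involves only characters of level in the range $(n,m]$ and so is an element of $B_{\geq n}$. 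Multiplying by $[U_n:U_m]$ and rearranging yields the identity
\[\mathbbm{1}_{xU_n}(y)+\tfrac{1}{|G_n|}\sum\nolimits_{\chi\in \hat G_m\setminus\hat G_n}\chi(yx^{-1})=[U_n:U_m]\cdot \mathbbm{1}_{xU_m}(y).\]

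The crucial observation is that the right-hand side tends to $0$ uniformly in $y$ as $m\to\infty$: since $U_n$ is pro-$p$ (because $1+\varpi\Ocal_K$ is), the index $[U_n:U_m]$ is a positive power of $p$, so its $p$-adic absolute value tends to $0$, whereas $\mathbbm{1}_{xU_m}$ has sup norm at most $1$. This exhibits $\mathbbm{1}_{xU_n}$ as a uniform limit of elements of $B_{\geq n}$, completing the key step. The only substantive input is precisely this use of the pro-$p$ nature of $U_n$ to control the error term in the $p$-adic sup norm; everything else is a routine Fourier computation on finite abelian quotients of $\Ocal_K^\times$.
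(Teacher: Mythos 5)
Your proof is correct, and it takes a genuinely different route from the one in the paper. The paper's argument is a duality argument: since smooth functions are dense in $B$ and the finite-level characters form a basis of the space of smooth functions, $\overline{B_{\geq n}}$ is a closed, $\Ocal_K^\times$-stable subspace of finite codimension; if it were proper, a nonzero functional vanishing on it would be invariant under an open subgroup $U\subset\Ocal_K^\times$ and would give rise to a $\Cbb_p$-valued Haar measure on the compact pro-$p$ group $U$, which cannot exist. Your argument is instead a direct, quantitative one: by finite Fourier inversion on $G_m=\Ocal_K^\times/U_m$ you produce the exact identity
\[\mathbbm{1}_{xU_n}+\tfrac{1}{|G_n|}\sum_{\chi\in \hat G_m\setminus\hat G_n}\chi(\cdot\,x^{-1})=[U_n:U_m]\,\mathbbm{1}_{xU_m},\]
whose right-hand side tends to $0$ in sup norm because $[U_n:U_m]$ is a growing power of $p$, so $\mathbbm{1}_{xU_n}$ is an explicit uniform limit of elements of $B_{\geq n}$; density then follows from the fact that locally constant functions are dense. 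What each approach buys: the paper's argument is shorter and conceptually tied to the standard ``no $p$-adic Haar measure'' slogan, whereas yours is constructive and makes the rate of approximation visible (both ultimately exploit the same fact, namely that pro-$p$ indices $[U_n:U_m]$ go to $0$ $p$-adically). One small wording caveat in your first paragraph: the phrase ``averages $E_N f$ of $f$ over cosets of $U_N$'' should not be read as an integral average (there is no $\Cbb_p$-valued Haar measure on $U_N$, for exactly the reason your proof exploits); what you want is simply the $U_N$-invariant function obtained by evaluating $f$ at chosen coset representatives, which converges uniformly to $f$ by uniform continuity on the compact group $\Ocal_K^\times$, and this is a finite linear combination of the indicators $\mathbbm{1}_{xU_N}$.
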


\begin{proof}
As the space of smooth functions from $\mathcal{O}_K^{\times}\rightarrow\mathbb{C}_p$ is dense in $B$ and a basis of this space is given by the set of all characters of finite level, the closure of $B_{\geq n}$ in $B$ is a subspace of finite codimension. If it is strictly included in $B$, there exists a continuous map $\lambda:\,B\rightarrow\mathbb{C}_p$ which is $U$-equivariant for some open subgroup $U\subset\mathcal{O}_K^{\times}$ acting trivially on $\mathbb{C}_p$. Then $\lambda$ gives rise to a non trivial Haar measure on $U$ which can not exist. 
\end{proof}

\begin{cor}\label{cor:dense}
Let $C(T^0,\Cbb_p)$ denote the space of continuous $\C_p$-valued functions on $T^0$. Then the subspace of $C(T^0,\Cbb_p)$ generated by the elements of $\Ttil^0$ is dense.
\end{cor}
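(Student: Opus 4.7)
The plan is to reduce to Proposition \ref{prop:gl1} by an iterative approximation starting from the last tensor factor. Since $T^0 = (\Ocal_K^\times)^d$ is profinite, locally constant $\C_p$-valued functions are dense in $C(T^0,\C_p)$, and any such function factors through a finite abelian quotient of the form $\prod_{i=1}^d \Ocal_K^\times/(1+\varpi^{n_i}\Ocal_K)$, on which the characters form a $\C_p$-basis (as $\C_p$ contains all roots of unity). Hence the linear span of pure tensors $\chi_1\otimes\cdots\otimes\chi_d$, with each $\chi_i$ a smooth character of $\Ocal_K^\times$, is already dense in $C(T^0,\C_p)$, and it suffices to approximate any such pure tensor, in sup-norm, by linear combinations of elements of $\Ttil^0$.

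Fix such a pure tensor $\chi_1\otimes\cdots\otimes\chi_d$ and $\varepsilon > 0$, and choose $\delta > 0$ small. Using Proposition \ref{prop:gl1} with $n=0$, I would first approximate $\chi_d$ within $\delta$ by a finite linear combination $\widetilde\chi_d = \sum_j a_{d,j}\psi_{d,j}$ of smooth characters of positive level, and set $M_d$ to be the maximum of those levels. Next, applying Proposition \ref{prop:gl1} with $n=M_d$, I would approximate $\chi_{d-1}$ within $\delta$ by a combination $\widetilde\chi_{d-1} = \sum_j a_{d-1,j}\psi_{d-1,j}$ of characters of level $>M_d$, and let $M_{d-1}$ be their maximum level. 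Iterating upwards, for each $i$ approximate $\chi_i$ within $\delta$ by a combination $\widetilde\chi_i = \sum_j a_{i,j}\psi_{i,j}$ of characters of level $> M_{i+1}$. By construction the pure tensors $\psi_{1,j_1}\otimes\cdots\otimes\psi_{d,j_d}$ appearing in the expansion of $\widetilde\chi_1\otimes\cdots\otimes\widetilde\chi_d$ satisfy
\[
\mathrm{level}(\psi_{i,j_i}) > M_{i+1} \geq \mathrm{level}(\psi_{i+1,j_{i+1}}) \quad \text{for } 1\leq i\leq d-1,
\]
so each of them lies in $\Ttil^0$.

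Finally, since each $\chi_i$ and each $\psi_{i,j}$ has sup-norm equal to $1$, a standard telescoping estimate bounds $\|\chi_1\otimes\cdots\otimes\chi_d - \widetilde\chi_1\otimes\cdots\otimes\widetilde\chi_d\|_\infty$ by a quantity of order $d\,\delta(1+\delta)^{d-1}$, which can be made smaller than $\varepsilon$ by taking $\delta$ sufficiently small. The only substantive input is Proposition \ref{prop:gl1}, and I do not foresee any real obstacle beyond carefully organizing the above error estimate.
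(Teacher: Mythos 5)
Your argument is correct and is essentially the proof the authors had in mind (the corollary is stated without a separate proof, precisely because it follows by exactly this kind of iterated application of Proposition~\ref{prop:gl1}). The two steps are right: first, locally constant functions on $T^0=(\Ocal_K^\times)^d$ are dense and are spanned by pure tensors $\chi_1\otimes\cdots\otimes\chi_d$ of smooth characters (because $\Cbb_p$ contains all roots of unity, so characters of the relevant finite abelian quotients form a basis); second, working from the last factor to the first and pushing the levels up with Proposition~\ref{prop:gl1} produces a linear combination of pure tensors with strictly decreasing levels, hence elements of $\Ttil^0$. One small simplification: since the sup-norm on $C(T^0,\Cbb_p)$ is non-archimedean, the telescoping bound does not pick up the factor $d(1+\delta)^{d-1}$; after writing the difference as a telescoping sum of $d$ terms, the strong triangle inequality gives $\|\chi_1\otimes\cdots\otimes\chi_d-\widetilde\chi_1\otimes\cdots\otimes\widetilde\chi_d\|_\infty\leq\delta$ outright (using $\|\widetilde\chi_j\|_\infty\leq\max(1,\delta)=1$ once $\delta\leq1$). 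Your cruder archimedean estimate is also a valid upper bound, so the conclusion stands either way.
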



\begin{prop}\footnote{V.~Pa{\v{s}}k{\=u}nas informed us that he has more general versions of this result in his forthcoming work with M.~Emerton}
Let $C(N_0\backslash \GL_d(\Ocal_K),\Cbb_p)$ denote the space of $\C_p$-valued continuous functions on $\GL_d(\Ocal_K)$ which are left invariant under $N_0$.
Then the subspace \[\sum_{\chi\in \Tcal_d} {\rm Ind}_{I(\chi)}^{\GL_d(\Ocal_K)}(\chi)\subset C(N_0\backslash \GL_d(\Ocal_K),\Cbb_p)\] is dense.
\end{prop}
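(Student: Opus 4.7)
The plan combines an induction-in-stages identification with the density of characters in $\Ttil^0$ given by Corollary~\ref{cor:dense}, followed by an averaging argument that exploits the strict-level-decrease condition defining $\Ttil^0$ in order to pass from a Borel-induction to the type-induction $V_\chi = \mathrm{Ind}_{I(\chi)}^{\GL_d(\Ocal_K)}(\chi)$.

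\smallskip

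\emph{Reduction to density in the coefficient space.} Using the semi-direct product $B_0 = T^0 \ltimes N_0$, induction in stages identifies
\[
C(N_0 \backslash \GL_d(\Ocal_K), \Cbb_p) \;=\; \mathrm{Ind}_{N_0}^{\GL_d(\Ocal_K)}(1) \;=\; \mathrm{Ind}_{B_0}^{\GL_d(\Ocal_K)}\bigl(C(T^0, \Cbb_p)\bigr),
\]
where $B_0$ acts on $C(T^0,\Cbb_p)$ through its quotient by $N_0$ via right translation. Corollary~\ref{cor:dense} gives density of the $\Cbb_p$-span of $\Ttil^0$ in $C(T^0,\Cbb_p)$; inspecting Proposition~\ref{prop:gl1} and taking tensor products across the $d$ factors shows that one may further restrict to characters $\chi\in\Ttil^0$ with $\mathrm{level}(\chi_i)\geq N$ for any prescribed $N$ and still retain density. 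Since continuous induction of Banach representations preserves density of $H$-stable Banach subspaces (approximate a $V$-valued continuous function by step functions with values in the dense subspace), one obtains for every $N$ the density of $\sum_{\chi\in\Ttil^0,\,\mathrm{level}(\chi_i)\geq N}\mathrm{Ind}_{B_0}^{\GL_d(\Ocal_K)}(\chi)$ in $C(N_0 \backslash \GL_d(\Ocal_K), \Cbb_p)$. It therefore suffices to show that each $f\in\mathrm{Ind}_{B_0}^{\GL_d(\Ocal_K)}(\chi)$ can be approximated in sup-norm by elements of $V_\chi$.

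\smallskip

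\emph{From $B_0$-equivariance to $I(\chi)$-equivariance.} The inclusion $B_0\subset I(\chi)$ induces an embedding $V_\chi\hookrightarrow\mathrm{Ind}_{B_0}^{\GL_d(\Ocal_K)}(\chi)$ whose image consists of those functions that are additionally left-invariant under $I(\chi)\cap\bar N$. One constructs an averaging operator $A_\chi$ by integrating $f(\bar n g)$ over $\bar n\in I(\chi)\cap\bar N$, corrected by the cocycle coming from the Bruhat exchange $\bar n \cdot n = n' \cdot t' \cdot \bar n''$ in the open cell $\bar N\cdot B_0$. For $f$ locally constant and left-invariant under a compact open subgroup $U\subset\GL_d(\Ocal_K)$, choosing $\chi\in\Ttil^0$ with $\mathrm{level}(\chi_i)\geq N$ so large that $I(\chi)\cap\bar N\subset U$ (possible by the strengthening above) makes $A_\chi f = f$, giving $f\in V_\chi$ on the nose. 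Approximating a general continuous $F$ first by a locally constant function then yields the desired density.

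\smallskip

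\emph{Main obstacle.} The key technical difficulty is in the construction of $A_\chi$ and the verification that it lands in $V_\chi$. A naive averaging over $I(\chi)\cap\bar N$ fails to preserve $N_0$-invariance of $f$, because $B_0$ does not normalize $\bar N$: the exchange $\bar n\cdot n = n'\cdot t'\cdot\bar n''$ introduces a cocycle factor $\chi(t')$ of the form $(\chi_{i+1}/\chi_i)(1+\varpi^{n_i}\cdot)$ together with a Jacobian correction, and the strict-level-decrease condition $\mathrm{level}(\chi_i) > \mathrm{level}(\chi_{i+1})$ defining $\Ttil^0$ is precisely the input needed to absorb these terms into a modified averaging. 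This is a continuous-valued counterpart of the type calculation in the preceding proposition (where the type property of $(I(\chi),\chi)$ was verified via the same kind of intertwiner analysis), now applied uniformly to all continuous functions on $N_0\backslash\GL_d(\Ocal_K)$ rather than only to matrix coefficients of smooth irreducible representations.
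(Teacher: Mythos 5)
Your plan has the right ingredients (Corollary~\ref{cor:dense}, the Iwahori decomposition of $I(\chi)$, and the observation that an element of $\mathrm{Ind}_{B_0}^{\GL_d(\Ocal_K)}(\chi)$ which happens to be left-invariant under $I(\chi)\cap\bar N$ lies in $V_\chi$), but the logical backbone does not hold up. Two concrete problems:

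First, the stated reduction is false. You claim it suffices to show that each $f\in\mathrm{Ind}_{B_0}^{\GL_d(\Ocal_K)}(\chi)$ can be approximated by elements of $V_\chi$, with $\chi$ \emph{fixed}. But $V_\chi$ is the \emph{closed} (not dense) subspace of $\mathrm{Ind}_{B_0}^{\GL_d(\Ocal_K)}(\chi)$ cut out by the extra left-invariance under $\bar N\cap I(\chi)$; a function $f$ in $\mathrm{Ind}_{B_0}^{\GL_d(\Ocal_K)}(\chi)$ that separates two elements of $\bar N\cap I(\chi)$ lying in the same $N_0$-right-coset has distance $\geq |f(1)|$ from every element of $V_\chi$. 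Moreover, the preceding claim that $\sum_{\chi}\mathrm{Ind}_{B_0}^{\GL_d(\Ocal_K)}(\chi)$ is dense is not what follows from ``induction preserves density of a dense subspace'': inducing the span of characters gives $\mathrm{Ind}_{B_0}^{\GL_d(\Ocal_K)}(\sum_\chi\chi)$, which is strictly larger than the finite sum $\sum_\chi\mathrm{Ind}_{B_0}^{\GL_d(\Ocal_K)}(\chi)$ since the coefficients in the character expansion may vary with $g$; a separate argument is needed. Second, the averaging operator $A_\chi$ you propose, defined by ``integrating $f(\bar n g)$ over $\bar n\in I(\chi)\cap\bar N$,'' is not defined in the $p$-adic Banach setting: there is no $\Cbb_p$-valued Haar measure on a pro-$p$ group, and the normalized Riemann sums $p^{-nm}\sum f$ diverge for general continuous $f$. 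You gesture at this in the ``Main obstacle'' paragraph without resolving it.

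The paper's proof sidesteps both issues by never fixing $\chi$ in advance and never averaging. It takes a continuous $N_0$-invariant $f$ and $\epsilon>0$, writes $\GL_d(\Ocal_K)$ as a finite disjoint union of $I_1$-cosets and then of $I_n$-cosets (with $n$ chosen from the modulus of continuity of $f$, so that $f$ is $\epsilon$-close to a function bi-invariant under $\bar N\cap I_n$), uses the normality of $\bar N\cap I_n$ in $\bar N\cap I_1$ to replace each restriction $f_{i,j}$ by the function $f'_{i,j}$ depending only on the $T^0$-part of the Iwahori decomposition, and finally invokes Corollary~\ref{cor:dense} to approximate $f'_{i,j}|_{T^0}$ by a finite combination of characters $\chi$ with $I(\chi)\subset I_n$. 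The ``choose the level of $\chi$ large compared to $n$'' idea you mention in your second paragraph is the heart of the matter, but it has to be calibrated to the function $f$ being approximated, not stated for a fixed $\chi$; once phrased that way the averaging operator is unnecessary, which is fortunate since it does not exist.
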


\begin{proof}
If $\chi\in\Ttil^0$, the character $\chi$ of $T^0$ uniquely extends to a character $\chi$ of $I(\chi)$ which is trivial on $I_n\cap N$ and $I_n\cap\bar{N}$. Let's name such a function a \emph{character function} for the moment. More generally for $g\in\GL_d(\Ocal_K)$, the function $\chi(\cdot g)$ of support $I(\chi)g^{-1}$ is named a \emph{right translated character function}. For $\chi\in\Ttil^0$, the space ${\rm Ind}_{I(\chi)}^{\GL_d(\Ocal_K)}(\chi)$ is exactly the subspace of $C(N_0\backslash \GL_d(\Ocal_K,\Cbb_p))$ generated by the right translated character functions. Let $f:\,\GL_d(\Ocal_K)\rightarrow\mathbb{C}_p$ be a continuous function, invariant on the left under $N_0$. We have to prove that we can approximate $f$ by right translated character functions. Let $g_1,\dots,g_r$ be a system of representatives of the quotient $I_1\backslash \GL_d(\Ocal_K)$. Let $f_i=f(\cdot\ g_i^{-1})|_{I_1}$, so that $f=\sum_{i=1}^r f_i(\cdot g_i)$. Now fix $1\leq i\leq r$ and $\epsilon >0$. As $I_1$ is compact, we can find $n\geq 1$, such that for $h\in I_n\cap\bar N$, we have $||f_i-f_i(\cdot\ h)||<\epsilon$. Let $h_1,\dots,h_s\in I_1$ be a system of representatives of $(I_n\cap \bar{N})\backslash (I_1\cap\bar{N})$, which is also a system of representatives of $I_n\backslash I_1$, and define $f_{i,j}=f_i(\cdot\ h_j^{-1})|_{I_n}$. Let $f'_{i,j}$ be the function on $I_n$ defined by $f'_{i,j}(nt\bar{N})=f_{i,j}(t)$ for $(n,t,\bar{N})\in (N\cap I_n)\times T^0\times (\bar{N}\cap I_n)$. As $(\bar{N}\cap I_n)$ is a normal subgroup of $(\bar{N}\cap I_1)$, we have $||f_{i,j}-f'_{i,j}||<\epsilon$. Using Corollary \ref{cor:dense}, for each $(i,j)\in [1,r]\times [1,s]$, we can find elements $\tilde{f}_{i,j}\in \Tcal_d$, such that $||f'_{i,j}|_{T^0}-\tilde{f}_{i,j}||<\epsilon$. Now we can write each $\tilde{f}_{i,j}$ as $\sum_k a_{i,j,k}\chi_{i,j,k}$ with $I(\chi_{i,j,k})\subset I_n$. We extend each $\chi_{i,j,k}$ to $I(\chi_{i,j,k})$ as previously described. As $I(\chi_{i,j,k})\subset I_n$, we can write $\tilde{f}_{i,j}$ as a finite sum of right translated character functions. If $\tilde{f}=\sum_{i,j} \tilde{f}_{i,j}(\cdot\ h_j g_i)$, we have $||f-\tilde{f}|| <\epsilon$, and $\tilde{f}$ is a finite sum of right translated character functions.
\end{proof}

Now we can conclude the proof.

\begin{prop}
Let $t\in R_{\bar\rho}$ such that $t$ is zero on each $eS_{{{\bf n}}}(H^{v_0}I(\chi),\Cbb_p)_{\mfrak}[\chi]$ such that $\chi\in\Ttil^0$, then $t=0$ on $e\hat{S}(H^{v_0},L)_{\mfrak}^{N_0}$.
\end{prop}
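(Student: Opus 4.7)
The plan is to combine the continuity of the action of $R_{\bar\rho}$ with the density result in the preceding proposition, bridged by Lemma \ref{Wout}. More precisely, I would first reduce to showing $t=0$ on $e\hat S_{\bf n}(H^{v_0},L)_{\mfrak}^{N_0}$, and then establish this vanishing using the density of principal series types together with continuity.

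For the reduction, Lemma \ref{Wout} provides a $\GL_d(\Ocal_K)\times\Hcal^{\rm un}$-equivariant isomorphism $e\hat S_{\bf n}(H^{v_0},L)_{\mfrak}\simeq W_{\bf n}\otimes_L e\hat S(H^{v_0},L)_{\mfrak}$ with $\Hcal^{\rm un}$ acting trivially on $W_{\bf n}$. A highest weight vector $v_{\rm hw}\in W_{\bf n}$ is $N$-invariant, hence $N_0$-invariant, so the map $f\mapsto v_{\rm hw}\otimes f$ gives an $R_{\bar\rho}$-equivariant injection
\[
e\hat S(H^{v_0},L)_{\mfrak}^{N_0}\hookrightarrow e\hat S_{\bf n}(H^{v_0},L)_{\mfrak}^{N_0},
\]
so vanishing of $t$ on the target implies vanishing on the source.

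For the main step, combining Lemma \ref{Wout} with the fact (from the proposition asserting that $e\hat S(H^{v_0},\mathcal{O})_{\mfrak}$ is a direct factor of $C(G(\Ocal_K),L)^r$) that $e\hat S(H^{v_0},L)_{\mfrak}$ is a $\GL_d(\Ocal_K)$-equivariant direct factor of $C(\GL_d(\Ocal_K),L)^r$, one sees that $e\hat S_{\bf n}(H^{v_0},L)_{\mfrak}$ is a $\GL_d(\Ocal_K)$-equivariant direct factor of $C(\GL_d(\Ocal_K),W_{\bf n})^r$ for the diagonal action. Passing to $N_0$-invariants and applying the $W_{\bf n}$-valued analogue of the previous proposition (the proof goes through verbatim since $W_{\bf n}$ is finite-dimensional and the density argument only concerns approximating continuous functions on $N_0\backslash\GL_d(\Ocal_K)$ by smooth ones), the sum $\sum_{\chi\in\Ttil^0}eS_{\bf n}(H^{v_0}I(\chi),\Cbb_p)_{\mfrak}[\chi]$ is dense in $e\hat S_{\bf n}(H^{v_0},L)_{\mfrak}^{N_0}$. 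The hypothesis says $t$ vanishes on this dense subspace; by continuity of the $R_{\bar\rho}$-action on the Banach space $e\hat S_{\bf n}(H^{v_0},L)_{\mfrak}^{N_0}$, we conclude $t=0$ on the whole space, as desired.

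The main technical point to verify is the density statement at weight ${\bf n}$ in the second step, which requires promoting the $W_{\bf n}$-valued analogue of the last proposition. Once one has the direct factor description above, this reduces to the same argument as in the trivial-coefficient case; the finite-dimensionality of $W_{\bf n}$ together with the unipotence of the $N_0$-action on it ensures that the approximation by characters of arbitrarily high level still produces a dense subspace, and no genuinely new input is needed.
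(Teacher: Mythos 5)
Your argument is correct and takes essentially the same route as the paper: reduce to a direct-summand statement inside $C(\GL_d(\Ocal_K),\cdot)^r$, invoke the density proposition after passing to $N_0$-invariants and $I(\chi)$-isotypic parts, and then transfer the vanishing from the $W_{\bf n}$-twisted space back to $e\hat S(H^{v_0},L)_{\mfrak}^{N_0}$. In fact your final step is slightly cleaner than the paper's: you embed $e\hat S(H^{v_0},L)_{\mfrak}^{N_0}$ into $e\hat S_{\bf n}(H^{v_0},L)_{\mfrak}^{N_0}$ via $f\mapsto v_{\rm hw}\otimes f$, whereas the paper asserts the (literally false, but harmless) equality $\hat S_{\bf n}^{N_0}=W_{\bf n}\otimes_L\hat S^{N_0}$; the $N_0$-invariants of the diagonal tensor product are genuinely larger, but only the inclusion you exhibit is used. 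One point worth making explicit, which the paper does and you leave implicit: to transfer density through the direct-sum decomposition $C(\GL_d(\Ocal_K),\Cbb_p)^r=S_1\oplus S_2$, one should note that the functor $F=\bigoplus_\chi\Hom_{I(\chi)}(\chi,-)$ and the functor of $N_0$-invariants both commute with finite direct sums, so that $F(S_1)\oplus F(S_2)$ being dense in $S_1^{N_0}\oplus S_2^{N_0}$ forces $F(S_1)$ to be dense in $S_1^{N_0}$; density of a subspace of a direct sum does not in general restrict to density in a summand, so this commutation argument is what makes the step go through.
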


\begin{proof}
We know that $\sum_{\chi}{\rm Ind}_{I(\chi)}^{\GL_n(\Ocal_K)}(\chi)$ is dense in $C(N_0\backslash \GL_n(\Ocal_K),\Cbb_p)$ and that $\hat{S}_{{{\bf n}}}(H^{v_0},L)_{\mfrak}$ is isomorphic to a direct summand of $C(\GL_d(\Ocal_K),L)^r$ for some $r$. It follows that $S_1=\hat S_{{{\bf n}}}(H^{v_0},L)_\mfrak \hat \otimes_L \Cbb_p$ is isomorphic to a direct summand of $C(\GL_d(\Ocal_K),\Cbb_p)^r$. We can write $C(\GL_d(\Ocal_K),\Cbb_p)^r=S_1\oplus S_2$. 

As the functor $F=\bigoplus_{\chi}{\rm Hom}_{I(\chi)}(\chi,-)$ commutes with finite direct sums, we know that $F(S_1)\oplus F(S_2)$ is dense in $[C(\GL_d(\Ocal_K),\Cbb_p)^r]^{N_0}$. As the functor of $N_0$-invariants commutes with direct sums, we conclude that $F(S_1)\subset S_1^{N_0}$ must be dense.  By assumption, $t$ vanishes on $F(S_1)$, hence on $S_1^{N_0}$, which contains $\hat{S}_{{{\bf n}}}(H^{v_0},L)_{\mfrak}^{N_0}$. Finally we conclude by remarking that \[\hat{S}_{{{\bf n}}}(H^{v_0},L)_{\mfrak}^{N_0}=W_{{{\bf n}}}\otimes_L \hat{S}(H^{v_0},L)_{\mfrak}^{N_0}.\] 
 \end{proof}

\begin{cor}\label{Rdensityoneigenvar}
Let $f\in R_{\bar\rho}$ be a function vanishing on all points of $Y_{\bar\rho}$ which are crystabelline of Hodge-Tate weights ${\bf k}$. Then the image of $f$ in $\Gamma(Y_{\bar\rho},\Ocal_Y)$ is zero.
\end{cor}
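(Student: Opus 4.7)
The proof is an assembly of the preceding propositions. Set ${\bf n}=(n_{\sigma,i})$ with $n_{\sigma,i}=k_{\sigma,i}+(i-1)$; this is a dominant algebraic weight since ${\bf k}$ is strongly dominant, and in view of $(\ref{fromntok})$, classical automorphic representations with $W_\infty$-component at the infinite places over $v_0$ given by $W_{{\bf n}}$ produce at $w_0$ a Galois representation with Hodge-Tate weights ${\bf k}$. For any $\chi=\chi_1\otimes\cdots\otimes\chi_d\in\Ttil^0$, the type-theoretic proposition above identifies each $\Hcal^{\rm un}$-eigenform in $eS_{{\bf n}}(H^{v_0}I(\chi),L)_\mfrak[\chi]$ with a vector coming from an irreducible automorphic representation $\Pi\in\Acal(W_\infty,S,e)$ whose local component $\Pi_{v_0}$ is a principal series $\mathrm{Ind}_B^{\GL_d(K)}(\eta)$ with $\eta|_{T^0}=\chi$; in particular such a $\Pi$ is of finite slope.

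By the proposition attaching a point of $Y_{\bar\rho}$ to a finite slope automorphic representation, the resulting closed point $z\in Y_{\bar\rho}$ has associated $\Gcal_K$-representation which becomes semi-stable over an abelian extension of $K$ with labeled Hodge-Tate weights ${\bf k}$. Since the characters $\chi_i$ have strictly decreasing levels by the defining property of $\Ttil^0$, the inertia parameters $\eta_i=\omega_{Y,i}(z)\delta_\Wcal((k_{\sigma,i}))^{-1}$ appearing in that proposition are pairwise distinct, so the representation is in fact potentially crystalline; combined with the abelian semistability, this exhibits $z$ as a crystabelline point of Hodge-Tate weights ${\bf k}$. By the hypothesis we have $f(z)=0$, and since $R_{\bar\rho}$ acts on the eigenform through the map $\theta:R_{\bar\rho}\to\Hcal_\mfrak(H^{v_0}I(\chi))$, the element $f$ acts as zero on the corresponding $\Hcal^{\rm un}$-eigenspace. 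Running this over every eigenform and every $\chi\in\Ttil^0$, we conclude that $f$ vanishes on $eS_{{\bf n}}(H^{v_0}I(\chi),L)_\mfrak[\chi]$ for every $\chi\in\Ttil^0$.

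The preceding proposition (whose proof rests on the density, via Mahler-type expansion, of smooth characters of strictly decreasing levels in the continuous dual of $T^0$) then yields that $f$ vanishes on $e\hat S(H^{v_0},L)_\mfrak^{N_0}$. To finish, let $z\in Y_{\bar\rho}$ be an arbitrary rigid analytic point with weight character $\chi_z=\kappa(z)$: by Proposition $\ref{points}$ together with Loeffler's identification $e\hat S(H^{v_0},L)_\mfrak^{N_0}[\chi_z]\simeq\varinjlim_r e\Scal(\chi_z,r)$, the point $z$ is represented by a non-zero $U_{v_0}$-finite $\Hcal$-eigenvector $v_z\in e\hat S(H^{v_0},L)_\mfrak^{N_0}$ on which $f$ acts as multiplication by $f(z)$. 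Hence $f(z)=0$ for every $z\in Y_{\bar\rho}$, which means that the image of $f$ in $\Gamma(Y_{\bar\rho},\Ocal_Y)$ is zero. All the substantive input—the type-theoretic identification of Hecke eigenspaces as spaces of classical automorphic forms and the density of smooth characters of strictly decreasing levels—is already contained in the preceding propositions; the corollary is essentially their synthesis, and the main verification needed is that smooth principal series induced from $I(\chi)$-types with $\chi\in\Ttil^0$ produce crystabelline points of the prescribed Hodge-Tate weights.
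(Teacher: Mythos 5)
Your proof is correct and follows the same route the paper has set up over the preceding propositions: types $(I(\chi),\chi)$ with $\chi\in\Ttil^0$ produce crystabelline points of Hodge--Tate weights ${\bf k}$ on $Y_{\bar\rho}$ via the finite-slope proposition, the density of characters with strictly decreasing levels kills $f$ on $e\hat S(H^{v_0},L)_\mfrak^{N_0}$, and Proposition~\ref{points} together with Loeffler's identification then forces $f(z)=0$ for every point $z$. The one small imprecision worth flagging is that the $\Pi$ arising from a ramified $\chi$ is not literally in $\Acal(W_\infty,S,e)$ (which demands Iwahori-fixed vectors at $v_0$), but this wording is inherited from the paper's own statement of the finite-slope proposition and does not affect the argument.
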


\subsection{Conclusion}
Let us summarize what we have proven so far using eigenvarieties. The following definition will be useful. 
\begin{defn}
Let $X$ be a rigid space and $R$ be a ring together with a ring homomorphism $\psi:R\rightarrow \Gamma(X,\Ocal_X)$.\\
\noindent (i)
A subset $Z\subset X$ is called \emph{$R$-closed} if $Z=\{x\in X\mid \psi(f)(x)=0\ \text{for all}\ f\in I\}$ for some ideal $I\subset R$.\\
\noindent (ii)  A subset $U\subset X$ is called \emph{$R$-open} if its complement is $R$-closed.
\end{defn}
Further we have an obvious notion of the $R$-closure of some subset $Z\subset X$ and a notion of $R$-density. 

Let ${\bf k}=(k_{\sigma,i})_\sigma\in \prod_\sigma\Z^d$ be a strongly dominant algebraic weight and $X_{\bf k}(\bar \rho_{w_0})$ denote the $R_{\bar \rho_{v_0}}$-closure of the set of {crystabelline} points of $X(\bar\rho_{w_0})$ which have labelled Hodge-Tate weights ${\bf k}$. We have finally proved the following result.

\begin{theo}\label{ImageEV}
The image of $Y(W_\infty,S,e)_{\bar\rho}$ is contained in $X_{\bf k}(\bar \rho_{w_0})$.
\end{theo}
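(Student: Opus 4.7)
The plan is to combine the map $f : Y_{\bar\rho} \to X(\bar\rho_{w_0})$ from Theorem \ref{maptofinslopespace} with the density statement of Corollary \ref{Rdensityoneigenvar}. Unraveling the definition of $R_{\bar\rho_{w_0}}$-closure, to prove that $f(Y_{\bar\rho}) \subset X_{\bf k}(\bar\rho_{w_0})$ it suffices to show the following: for every $g \in R_{\bar\rho_{w_0}}$ whose image in $\Gamma(X(\bar\rho_{w_0}),\Ocal_{X(\bar\rho_{w_0})})$ vanishes at every crystabelline point of $X(\bar\rho_{w_0})$ with Hodge-Tate weight ${\bf k}$, the pullback $f^\ast g \in \Gamma(Y_{\bar\rho},\Ocal_Y)$ is zero.

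First I would factor the map $R_{\bar\rho_{w_0}} \to \Gamma(Y_{\bar\rho},\Ocal_Y)$ underlying $f_{\bar\rho}$ through the global deformation ring. Absolute irreducibility of $\bar\rho$ (which is inherited from $\bar\rho_{w_0}$) ensures that restriction of deformations to the decomposition group at $w_0$ induces a ring homomorphism $\iota : R_{\bar\rho_{w_0}} \to R_{\bar\rho}$, and by construction the composition $R_{\bar\rho_{w_0}} \xrightarrow{\iota} R_{\bar\rho} \to \Gamma(Y_{\bar\rho},\Ocal_Y)$ coincides with $f_{\bar\rho}^\ast$. So writing $\tilde g = \iota(g) \in R_{\bar\rho}$, it is enough to show that $\tilde g$ vanishes in $\Gamma(Y_{\bar\rho},\Ocal_Y)$.

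Next I would verify the hypothesis of Corollary \ref{Rdensityoneigenvar} for $\tilde g$. Let $y \in Y_{\bar\rho}$ be a crystabelline point of Hodge-Tate weight ${\bf k}$, meaning that $f_{\bar\rho}(y) \in \Xfrak_{\bar\rho_{w_0}}$ corresponds to a crystabelline Galois representation with labeled Hodge-Tate weights ${\bf k}$. By Theorem \ref{maptofinslopespace} the point $f(y) \in X(\bar\rho_{w_0})$ projects to $f_{\bar\rho}(y)$, so $f(y)$ is by definition a crystabelline point of $X(\bar\rho_{w_0})$ with weight ${\bf k}$. Hence $g$ vanishes at $f(y)$ by the choice of $g$, which translates into $\tilde g(y) = 0$. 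Applying Corollary \ref{Rdensityoneigenvar} to $\tilde g$ gives $\tilde g = 0$ in $\Gamma(Y_{\bar\rho},\Ocal_Y)$, hence $f^\ast g = 0$, which concludes the proof.

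There is really no central obstacle left: the substantive work has been carried out in the construction of the map $f$ (Theorem \ref{maptofinslopespace}) and in the density statement for the eigenvariety (Corollary \ref{Rdensityoneigenvar}). The only thing to be careful about is bookkeeping: making sure that the $R_{\bar\rho_{w_0}}$-module structure on $X(\bar\rho_{w_0})$ (via projection to $\Xfrak_{\bar\rho_{w_0}}$) and the $R_{\bar\rho}$-module structure on $Y_{\bar\rho}$ (via the pseudo-character $T_Y$) are compatible under $\iota$ and $f$, which is immediate from the constructions since $T_Y|_{\Gcal_{w_0}}$ agrees with the pseudo-character of the universal deformation pulled back along $f_{\bar\rho}$.
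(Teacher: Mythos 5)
Your proof is correct and fills in precisely the synthesis that the paper leaves implicit when it states ``We have finally proved the following result.'' Your argument -- unraveling the definition of $R_{\bar\rho_{w_0}}$-closure, routing $g \in R_{\bar\rho_{w_0}}$ through $\iota: R_{\bar\rho_{w_0}} \to R_{\bar\rho,S}$ via restriction of deformations, checking that the resulting $\tilde g$ vanishes on crystabelline classical points of $Y_{\bar\rho}$ of weight ${\bf k}$ because these map under $f$ to crystabelline points of $X(\bar\rho_{w_0})$ of the same weight, and then invoking Corollary \ref{Rdensityoneigenvar} -- is exactly what the paper's summary relies on, and your closing remark about the compatibility of $T_Y|_{\Gcal_{w_0}}$ with the universal pseudo-character pulled back along $f_{\bar\rho}$ is the right bookkeeping point.
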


\section{The main theorem}

%

Let us fix a continuous {absolutely irreducible} representation $\bar r:\,\Gcal_K\rightarrow\GL_d(\Fbb)$. We need to embed our local situation into a global one. For this we use the results of the appendix of \cite{EmertonGee} and the following proposition.

\begin{prop}\label{potdiag}
Let $\bf{k}$ be a set of labelled Hodge-Tate weights. Then $\bar r$ has a potentially diagonal lift of Hodge-Tate weights $\bf{k}$.
\end{prop}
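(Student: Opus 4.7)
The plan is to produce a finite extension $K'/K$ over which $\bar r$ becomes trivial, to construct a diagonal crystalline lift there with the correct labelled Hodge-Tate weights, and then to descend to a lift of $\bar r$ over $K$.

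First, since $\bar r$ has finite image, I would choose a finite Galois extension $K'/K$ (contained in some algebraic closure of $K$) such that $\bar r|_{\Gcal_{K'}}$ is trivial; after enlarging $K'$ and $\Fbb$ one may assume that $K'$ contains the normal closure of $K$ and is big enough to support the characters constructed below. The labelled weights $\mathbf{k}=(k_{\sigma,i})_{\sigma:K\hookrightarrow \bar\Q_p}$ pull back along the restriction map $\Hom(K',\bar\Q_p)\twoheadrightarrow \Hom(K,\bar\Q_p)$ to give a set of labelled Hodge-Tate weights for $\Gcal_{K'}$-representations.

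Second, for each $i=1,\dots,d$ I would construct a crystalline character $\chi_i\colon \Gcal_{K'}\rightarrow W(\Fbb)^\times$ with labelled Hodge-Tate weights realizing $(k_{\sigma,i})_\sigma$ and whose reduction mod $p$ is trivial. This is straightforward: a crystalline character of $\Gcal_{K'}$ is determined by its Hodge-Tate weights together with the value of an unramified twist (the relevant character of $\Gcal_{K'}^{\rm ab}\simeq \hat\Z\times \Ocal_{K'}^\times$ being pinned down on $\Ocal_{K'}^\times$ by the weights via Lubin-Tate theory), and the unramified twist may be adjusted so that the mod $p$ character is trivial. The direct sum $r'=\bigoplus_{i=1}^d\chi_i\colon \Gcal_{K'}\rightarrow \GL_d(W(\Fbb))$ is then a crystalline lift of the trivial representation $\bar r|_{\Gcal_{K'}}$ with the prescribed labelled Hodge-Tate weights, and is manifestly a direct sum of characters.

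Third, the main task is to descend $r'$ to a representation $r\colon \Gcal_K\rightarrow \GL_d(W(\Fbb))$ lifting $\bar r$. For this I would apply the constructions of the appendix of \cite{EmertonGee}: one analyzes the obstruction to extending $r'$ along the finite group $\mathrm{Gal}(K'/K)$ using inflation-restriction, and uses the freedom in the unramified parts of the $\chi_i$ together with a cocycle construction to make $r'$ equivariant (up to the right twist) for the conjugation action of $\Gcal_K$ on $\Gcal_{K'}$ modelled on $\bar r$. The outcome is a continuous representation $r$ of $\Gcal_K$ whose restriction to $\Gcal_{K'}$ is (isomorphic to) $r'$ and whose reduction is $\bar r$; by construction $r$ is potentially diagonal of labelled Hodge-Tate weights $\mathbf{k}$.

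The main obstacle is the last step: one needs to arrange simultaneously that $r$ reduces to $\bar r$ (and not merely to some other lift of $\bar r|_{\Gcal_{K'}}$) and that the extension class controlling the descent actually vanishes. This matching of the residual representation with the descent datum is precisely the point at which one invokes the cohomological input from the appendix of \cite{EmertonGee}, which guarantees the existence of potentially diagonalizable lifts with any chosen Hodge-Tate weights.
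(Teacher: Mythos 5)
Your overall strategy (trivialize $\bar r$ over a finite extension, lift there, descend) is not the one used in the paper, and the descent step where you defer to "the constructions of the appendix of \cite{EmertonGee}" is a genuine gap. The appendix of \cite{EmertonGee} concerns globalizing a local residual representation (finding a global $\bar\rho$ restricting to $\bar r$), not descending a potentially diagonal lift of $\bar r|_{\Gcal_{K'}}$ along $K'/K$, so the citation does not supply the cohomological/cocycle argument you gesture at. You yourself flag this as "the main task" and "the main obstacle," but then do not carry it out: you must simultaneously (a) make $r'$ equivariant for the outer $\Gcal_K/\Gcal_{K'}$-action modelled on $\bar r$, and (b) ensure the resulting extension of $r'$ to $\Gcal_K$ reduces to $\bar r$ and not merely to some twist. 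For a general $K'$ trivializing $\bar r$ this is not automatic, and no tool in the proposal addresses it.

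The paper avoids the descent problem entirely by exploiting the structure of absolutely irreducible mod $p$ representations of $\Gcal_K$: since $\bar r$ is absolutely irreducible of dimension $d$ over a finite field, it is necessarily $\mathrm{Ind}_{\Gcal_{K'}}^{\Gcal_K}\bar\eta$ for a character $\bar\eta$ of $\Gcal_{K'}$, where $K'$ is the \emph{unramified} extension of $K$ of degree $d$. One then constructs a crystalline character $\theta$ of $\Gcal_{K'}$ whose restriction to inertia is prescribed via local class field theory so that the induced Hodge--Tate weights come out to $\mathbf{k}$, corrects by the Teichm\"uller lift $\delta$ of $\bar\eta\bar\theta^{-1}$ (a finite-order character), and sets $r=\mathrm{Ind}_{\Gcal_{K'}}^{\Gcal_K}(\theta\delta)$. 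This $r$ reduces to $\mathrm{Ind}\,\bar\eta\cong\bar r$ by construction, and $r|_{\Gcal_{K'}}$ is a direct sum of the $\Gcal_K/\Gcal_{K'}$-conjugates of the character $\theta\delta$, hence diagonal. Replacing your trivialization-and-descent scheme by this induction argument is exactly the missing idea; without it, your proposal does not reach a complete proof.
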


\begin{proof}
Let $K'$ be the unique unramified extension of degree $d$ of $K$. Then there exists a character $\eta$ of $\Gcal_{K'}$ such that $\bar r\simeq\mathrm{Ind}_{\Gcal_{K'}}^{\Gcal_K}\eta$. If ${\bf k}=(k_{\sigma,i})$ is a stronlgy dominant weight, let $\theta$ be a character $\Gcal_{K'}\rightarrow\bar\Q_p^\times$ such that the restriction of $\theta$ to the inertia group of $K'$ corresponds, via local class field theory, to the character of $\mathcal{O}_{K'}^\times$ given by $x\mapsto\prod_{\sigma,i}\psi_{\sigma,i}(x)^{k_{\sigma,i}}$, $(\psi_{\sigma,i})_{1\leq i\leq d}$ being the set of embeddings of $K'$ in $\bar\Q_p$ whose restriction to $K$ is $\sigma$. Let $\delta$ be a locally constant lift of $\eta\bar\theta^{-1}$, for example using the Teichm\"uller lift, then $\mathrm{Ind}_{\Gcal_{K'}}^{\Gcal_K}(\theta\delta)$ is a lift of $\bar r$ which has Hodge-Tate weights ${\bf k}$ and whose restriction to $\Gcal_{K'}$ is diagonal.
\end{proof}

We also assume that $p$ does not divide $2d$. Then Corollary $A.7$ of \cite{EmertonGee} and Proposition \ref{potdiag} tell us that we can find $F$ a totally real field, $E$ a totally imaginary quadratic extension of $F$ and a continuous irreducible representation $\bar\rho : \Gcal_F\rightarrow\Gcal_d(\bar\Fbb_p)$ (see for example \cite[\S$5.1$]{EmertonGee} for the definition of $\Gcal_d$) such that
\begin{itemize}
\item $4|[F:\Q]$ ;
\item each place $v|p$ of $F$ splits in $E$ and $F_v\simeq K$;
\item for each place $v|p$ of $F$, there is a place $\tilde v$ of $E$ dividing $p$ and such that $\bar\rho|_{\Gcal_{F_{\tilde v}}}\simeq \bar r$ ;
\item $\bar\rho$ is unramified outside of $p$ ;
\item $\bar\rho^{-1}(\GL_d(\bar\Fbb_p)\times\GL_1(\bar\Fbb_p))=\Gcal_E$ ;
\item $\bar\rho(\Gcal_{E(\zeta_p)})$ is adequate (in the sense of \cite[\S$2$]{Thorne})
\item $\bar\rho$ is automorphic, we will explain now what this means.
\end{itemize}

Let $v_1$ be a place\footnote{The introduction of this auxiliary place is only needed for the patching construction which will be used in the forthcoming lines} of $F$ which is prime with $p$ and satisfies the same hypothesis as in \cite[\S$5.3$]{EmertonGee}. We define the compact open subgroup $H=\prod_v H_v\subset G(\mathbb{A})$ so that $H_v\simeq\GL_d(\Ocal_K)$ if $v|p$, $H_v\subset G(F_v)$ is maximal hyperspecial if $v\nmid p$ and $v\neq v_1$, and $H_{v_1}$ an open pro-$\ell$-subgroup of $G(F_{v_1})$ for $\ell$ the residual characteristic of $v_1$. We say that $\bar\rho$ is automorphic if $S_W(H,L)_{\bar\rho}\neq0$ for some irreducible locally algebraic representation $W$ of $G(\mathcal{O}_F\otimes\Z_p)$.

{All constructions we did in section $3$ depend on the choice of a representation $W_\infty$ of the group $\prod_{v\in S'_p, w\in I_\infty(v)}G(F_w)$. Even if that's not explicit in the notation, the space $S(H^{v_0},L)$ actually depends on $W_\infty$. That's why we need the following lemma.}

\begin{lem}\label{existence}
There exists an irreducible algebraic representation $W_\infty$ of the group $\prod_{v\in S'_p}\prod_{w\in I_\infty(v)}G(F_v)$ such that $\hat S(H^{v_0},L)_{\bar\rho}\neq0$.
\end{lem}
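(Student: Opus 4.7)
The plan is to reduce, via Lemma \ref{Wout}, to producing a classical automorphic form inside $\hat S_{\bf n}(H^{v_0},L)_{\bar\rho}$ for a suitable algebraic weight ${\bf n}$ and an appropriate $W_\infty$, and then to exhibit such a form starting from the automorphy hypothesis on $\bar\rho$.

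First I would invoke the automorphy of $\bar\rho$ at level $H$: there exists an irreducible locally algebraic representation $W=W^{\rm alg}\otimes W^{\rm sm}$ of $G(\Ocal_F\otimes\Z_p)$ with $S_W(H,L)_{\bar\rho}\neq 0$. Since $G(F\otimes_{\Q}\R)$ is compact, the usual automorphic decomposition of $S_W(H,L)$ produces an irreducible automorphic representation $\Pi$ of $G(\Abb_F)$ with $\rho_\Pi$ lifting $\bar\rho$ and $\Pi_\infty$ matching $W^{\rm alg}$ under $\iota_\infty\iota_p^{-1}$. Because $H_v=G(\Ocal_{F_v})$ is maximal hyperspecial at every $v\in S'_p$, and because the globalization of $\bar r$ furnished by Proposition \ref{potdiag} and \cite{EmertonGee} can be taken crystalline at each $p$-adic place, one may assume $\Pi_v$ is unramified at every $v\in S'_p$; equivalently, the smooth part $W^{\rm sm}$ of $W$ can be assumed trivial along the $S'_p$-component.

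Next I would set
\[W_\infty:=\bigotimes\nolimits_{v\in S'_p}\bigotimes\nolimits_{w\in I_\infty(v)}\Pi_w,\]
which is an irreducible algebraic representation of $\prod_{v\in S'_p,\,w\in I_\infty(v)}G(F_w)$ because $\Pi_\infty$ is algebraic, and let ${\bf n}$ be the dominant algebraic weight on $\GL_d(\Ocal_K)$ whose associated representation $W_{\bf n}$ corresponds to $\bigotimes_{w\in I_\infty(v_0)}\Pi_w$ via $\iota_\infty\iota_p^{-1}$. For a sufficiently small compact open $H'_{v_0}\subset\GL_d(\Ocal_K)$ (so that any remaining smooth type at $v_0$ is trivialized), the contribution of $\Pi$ yields a nonzero element of $S_{W_{\bf n}\otimes W_\infty}(H'_{v_0}H^{v_0},L)_{\bar\rho}$. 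This classical space embeds Hecke-equivariantly into $\hat S_{\bf n}(H^{v_0},L)$ as a single stage of the direct-limit-then-completion construction of section \ref{densityautsection}, and the embedding is compatible with localization at the maximal ideal $\mfrak$ attached to $\bar\rho$. Hence $\hat S_{\bf n}(H^{v_0},L)_{\bar\rho}\neq 0$.

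Finally, applying Lemma \ref{Wout} with this ${\bf n}$ gives the $\GL_d(\Ocal_K)\times\Hcal^{\rm un}$-equivariant isomorphism
\[\hat S_{\bf n}(H^{v_0},L)_{\mfrak}\simeq W_{\bf n}\otimes_L\hat S(H^{v_0},L)_{\mfrak},\]
so the nonvanishing of the left-hand side forces $\hat S(H^{v_0},L)_{\bar\rho}\neq 0$ for the above choice of $W_\infty$, since $W_{\bf n}$ is a nonzero finite-dimensional $L$-vector space. I expect the main obstacle to be the first step: carefully analyzing the automorphic decomposition of $S_W(H,L)$ for locally algebraic $W$ and extracting a single $\Pi$ that is unramified at all $v\in S'_p$, so that $W_\infty$ built from $\Pi_\infty$ is purely algebraic. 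This reduction leans crucially on the maximality of $H_v$ at the places in $S'_p$ together with the specific crystalline nature at $p$ of the globalization of $\bar r$ provided by \cite{EmertonGee}.
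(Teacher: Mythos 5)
Your approach diverges from the paper's, and it contains a gap at the key step. You try to produce a single automorphic representation $\Pi$ lifting $\bar\rho$ with the additional property that $\Pi_v$ is unramified at every $v\in S'_p$, and you justify this by asserting that the Emerton--Gee globalization ``can be taken crystalline at each $p$-adic place.'' This is not justified by the hypotheses recorded in the paper. Proposition \ref{potdiag} and the bullet list from \cite{EmertonGee} only yield a residual $\bar\rho$ whose restriction at each $p$-adic place is $\bar r$, together with the fact that $\bar\rho$ is \emph{automorphic} in the sense that $S_W(H,L)_{\bar\rho}\neq 0$ for \emph{some} irreducible locally algebraic weight $W$ of $G(\Ocal_F\otimes\Z_p)$. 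Nothing forces the smooth part $W^{\rm sm}$ of that $W$ to be trivial along $S'_p$; equivalently, an automorphic $\Pi$ realizing this automorphy may well be a ramified (principal series) representation at $v\in S'_p$. Indeed, the lift produced by Proposition \ref{potdiag} is only \emph{potentially} diagonal, hence in general only crystabelline rather than crystalline, so the standard local-global dictionary at $p$ does not give unramifiedness. Since nothing prevents every $\Pi$ contributing to $S_W(H,L)_{\bar\rho}$ from being ramified at the places of $S'_p$, your step ``one may assume $\Pi_v$ is unramified at every $v\in S'_p$'' cannot be made; without it, the representation $W_\infty$ you build from $\Pi_\infty$ need not give $\hat S(H^{v_0},L)_{\bar\rho}\neq 0$, because that space has fixed hyperspecial level and purely algebraic weight at $S'_p$.

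The paper avoids this obstacle altogether by not extracting any individual automorphic representation. It passes instead to the completed cohomology $\hat S(H^p,L)_{\bar\rho}$ in which \emph{all} levels at $p$ are allowed, uses automorphy to see that this space is nonzero, and then uses the structural fact (proved in section \ref{densityautsection}) that $\hat S(H^p,L)_{\bar\rho}$ is a direct summand of $C(G(\Ocal_F\otimes\Z_p),L)^r$. That guarantees the existence of locally algebraic --- and then, by singling out the $S'_p$-factors, purely algebraic --- vectors in the $S'_p$ direction, which is exactly the existence of an irreducible algebraic $W^{v_0}$ with $\Hom_{\prod_{v\in S'_p}G(F_v)}(W^{v_0},\hat S(H^p,L)_{\bar\rho})\neq 0$, and this Hom space is identified with $\hat S(H^{v_0},L)_{\bar\rho}$. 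If you want to keep your route, you would need to either strengthen the globalization so that the automorphic form is genuinely crystalline (hence unramified) at all $v\in S'_p$, or else replace the attempt to isolate a single $\Pi$ with the density-of-algebraic-vectors argument in the completed space, as the paper does.
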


\begin{proof}
By definition, we know that there exists an irreducible locally algebraic representation $W$ of $G(\mathcal{O}_F\otimes\Z_p)$ such that $S_W(H,L)_{\bar\rho}\neq0$. Let 
\[\hat S(H^p,L)_{\bar\rho}=\varprojlim_n\varinjlim_{H_p\subset G(\Ocal_F\otimes\Z_p)}S(H^pH_p,\Ocal_L/\varpi_L^n)_{\bar\rho}.\]
 We see, as in section \ref{densityautsection} that $S_W(H,L)_{\bar\rho}\simeq \Hom_{G(\Ocal_F\otimes\Z_p)}(W^*,\hat S(H^p,L)_{\bar\rho})$ and that, as a $G(\Ocal_F\otimes\Z_p)$-representation, $S(H^p,L)_{\bar\rho}$ is isomorphic to a non zero direct factor of $C(G(\Ocal_F\otimes\Z_p),L)^r$ for some $r\geq1$.\\
We can then find an irreducible algebraic representation $W^{v_0}$ of $\prod_{v\in S'_p}G(F_v)$ such that $\Hom_{\prod_{v\in S'_p}G(F_v)}(W^{v_0},\hat S(H^p,L)_{\bar\rho}\neq0$ and choose for $W_\infty$ the irreducible algebraic representation of $\prod_{v\in S'_p}\prod_{w\in I_\infty(v)}G(F_v)$ associated to $W^{v_0}$ as explained in section \ref{densityautsection}. Then we have $\hat S(H^{v_0},L)_{\bar\rho}\simeq\Hom_{\prod_{v\in S'_p}G(F_v)}(W^{v_0},\hat S(H^p,L)_{\bar\rho})$.
\end{proof}

%

\subsection{A result of density in crystalline deformation spaces}

Fix ${\bf n}$ a dominant algebraic weight such that $\Hom_{\GL_d(\Ocal_K)}(W_{\bf k}^*,\hat S(H^{v_0},L)_{\bar\rho})\neq0$. We write ${\bf k}$ for the strongly dominant weight associated to ${\bf n}$ by the recipe of $(\ref{fromntok})$.


{By Kisin's result \cite{Kisinpstdeform} there exists a reduced $p$-torsion free quotient $R_{\bar r, {\bf k}}^{\rm cris}$ of $R_{\bar r}$ such that a continuous $\zeta:\,R_{\bar r}\rightarrow\bar\Q_p$ factors through $R_{\bar r, {\bf k}}^{\rm cris}$ if and only if $\zeta\circ r^{\rm univ}$ is crystalline of Hodge-Tate weight ${\bf k}$. Here $r^{\rm univ}$ is the universal deformation of $\bar r$ on $R_{\bar r}$. 
Moreover the ring $R_{\bar r,\bf k}^{\rm cris}[1/p]$ is formally smooth. Let's denote by $\Xfrak_{\bar r,{\bf k}}^{\rm cris}$ the generic fiber of the formal scheme $\Spf R_{\bar r,\bf k}^{\rm cris}$. In this section we will use patching techniques to prove that the $R_{\bar r}$-closure (or equivalently the $R_{\bar r, {\bf k}}^{\rm cris}$-closure) of automorphic points of $\Xfrak_{\bar r,{\bf k}}^{\rm cris}$ is a union of connected components of $\Xfrak_{\bar r,{\bf k}}^{\rm cris}$. 


In our particular setting, we will say that a $\bar\Q_p$-point of $\Spec(R_{\bar r, {\bf k}}^{\rm cris})$ is automorphic if the corresponding deformation of $\bar r$ is isomorphic to the restriction to $\Gcal_{E_{w_0}}$ of a deformation $\rho$ of $\bar\rho$ associated to a an automorphic form of $G$ of type $W_\infty$. More precisely, let $\mathfrak{p}$ be the kernel of the map $R_{\bar\rho}[1/p]\rightarrow \bar\Q_p$ associated to $\rho$, then $\rho$ is called automorphic if $S(\GL_d(\Ocal_K)\tilde{H}^{v_0},L)[\mathfrak{p}]\neq0$ for some compact open subgroup $\tilde{H}^{v_0}\subset G(\Abb_F^{v_0})$.}

Further an irreducible component of $\Spec(R_{\bar r, {\bf k}}^{\rm cris}[1/p])$ is called automorphic if it contains an automorphic point.
\begin{rem}
Note that actually our definition of being automorphic depends on the representation $W_\infty$ chosen as in Lemma $\ref{existence}$. However, in order not to overload the notation we often will suppress $W_\infty$ from the notations. 
\end{rem}
Our goal in this section is to use the usual patching construction to prove the following result.

\begin{theo}\label{O+densecomp}
Let $X$ be an automorphic component of $\Spec(R_{\bar r, {\bf k}}^{\rm cris}[1/p])$. Then the set of automorphic points in $X$ is Zariski dense.
\end{theo}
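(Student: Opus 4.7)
The plan is to apply the Taylor--Wiles--Kisin patching machinery in the crystalline setting at $v_0$. First, I would construct a patched module $M_\infty$ over a patched deformation ring
\[
R_\infty \;=\; R_{\bar r,\mathbf{k}}^{\mathrm{cris}} \,\hat\otimes_\Ocal\, R_{\bar\rho_{v_1}}^\square \,\hat\otimes_\Ocal\, \bigotimes_{v\in S_p'} R_{\bar\rho_v}^{\mathrm{cris},\square,\mathbf{k}(v)} \,\hat\otimes_\Ocal\, \Ocal[[x_1,\ldots,x_g]],
\]
where the Hodge--Tate weights $\mathbf{k}(v)$ at the other places $v\in S_p'$ are dictated by the choice of $W_\infty$. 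The module $M_\infty$ is built by patching the $\mfrak$-localized spaces of automorphic forms at Taylor--Wiles levels; it carries a compatible action of a regular local ring $S_\infty = \Ocal[[y_1,\ldots,y_{g+q}]]$ under which it is finite free, and quotienting by the augmentation ideal recovers the original Hecke module $\Hom_{\GL_d(\Ocal_K)}(W_{\mathbf{n}}^*,\hat S(H^{v_0},L))_{\mfrak}$. The adequacy hypothesis on $\bar\rho(\Gcal_{E(\zeta_p)})$ and the automorphy of $\bar\rho$ are precisely what make the patching go through.

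Second, the standard dimension count shows $\dim R_\infty = \dim S_\infty$, so $M_\infty$ is a maximal Cohen--Macaulay $R_\infty$-module. In particular, $\mathrm{Supp}(M_\infty[1/p])$ is a union of irreducible components of $\Spec R_\infty[1/p]$. By Kisin's formal smoothness result for $R_{\bar r,\mathbf{k}}^{\mathrm{cris}}[1/p]$ (and the analogous statement for the other local crystalline factors and the framed ring at $v_1$), the ring $R_\infty[1/p]$ is formally smooth, and its irreducible components decompose as products of components of the individual local factors. Therefore, an automorphic component $X\subset \Spec R_{\bar r,\mathbf{k}}^{\mathrm{cris}}[1/p]$ is the image under the projection $\Spec R_\infty[1/p] \to \Spec R_{\bar r,\mathbf{k}}^{\mathrm{cris}}[1/p]$ of a union of components of $\Spec R_\infty[1/p]$.

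Third, I would use the given automorphic point of $X$ to exhibit a classical closed point $y_0 \in \Spec R_\infty[1/p]$ lying in $\mathrm{Supp}(M_\infty[1/p])$. The component $Y$ of $\Spec R_\infty[1/p]$ containing $y_0$ then projects surjectively onto $X$ and lies entirely in $\mathrm{Supp}(M_\infty[1/p])$ by Cohen--Macaulayness. For any closed point $y$ of $Y$, the non-vanishing of $M_\infty[1/p]\otimes k(y)$ translates, via the tautological interpretation of $M_\infty$ at a given Taylor--Wiles level together with classicality of sufficiently regular small-slope overconvergent forms, into the existence of a classical automorphic representation $\Pi$ on $G$ of some tame level $\tilde H^{v_0}$ whose Hecke eigenvalues match the image of $y$. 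This exhibits the projection of $y$ to $X$ as an automorphic point in the paper's sense, and the surjectivity of $Y\to X$ then yields Zariski density of the automorphic points in $X$.

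The main obstacle I expect is the last step: verifying that every closed point of $\mathrm{Supp}(M_\infty[1/p])$ genuinely corresponds to a classical automorphic eigensystem on $G$ at some tame level, rather than merely to a $p$-adic eigensystem. This requires carrying along enough control over the Taylor--Wiles--Kisin construction to recognize, at an arbitrary point of the support, the contribution of a classical automorphic form at a higher tame level; concretely this amounts to the appropriate classicality theorem for the patched overconvergent automorphic forms combined with local-global compatibility at the Taylor--Wiles primes. A secondary bookkeeping difficulty is to choose the local components at $v\in S_p'$ coherently so that the automorphic component $X$ at $v_0$ does arise as the projection of some component of $\Spec R_\infty[1/p]$ meeting the support of $M_\infty$.
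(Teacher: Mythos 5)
Your setup of the patching construction (the patched ring $R_\infty$ with the framed crystalline factors at $p$ and the framed factor at $v_1$, the patched module $M_\infty$ finite free over $S_\infty$, the Cohen--Macaulayness argument showing the support of $M_\infty[1/p]$ is a union of components, and the decomposition of components of $\Spec R_\infty[1/p]$ into products of local components via Kisin's Lemma $3.4.12$) matches the paper exactly. The gap is in the last step, and you yourself flagged it: you try to show that \emph{every} closed point of $\mathrm{Supp}(M_\infty[1/p])$ corresponds to a classical automorphic eigensystem. That step does not hold and cannot be fixed by a classicality theorem. The point of the density statement is precisely that the automorphic points form a proper, merely Zariski-dense subset of the support; after passing to the inverse limit $M_\infty = \varprojlim M_n$, the support fills out entire irreducible components while the automorphic eigensystems remain a countable set. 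Moreover, the $M_n$ are localizations of ordinary (not overconvergent) spaces of automorphic forms, so the classicality issue you invoke is a red herring created by the wrong direction of the argument.

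The paper instead runs the argument in the dual direction, avoiding any attempt to classify points of the support. Let $t\in R_{\bar r,\mathbf k}^{\rm cris}$ vanish at every automorphic point. Each $M_n$ is a quotient of a space of automorphic forms on which $R_\infty$ acts through a Hecke algebra, and every eigensystem appearing in $M_n[1/p]$ gives an automorphic Galois representation whose restriction to $\Gcal_{E_{w_0}}$ is an automorphic point. Hence $t$ acts by zero on each $M_n$, therefore on $M_\infty = \varprojlim M_n$, therefore on $M_\infty[1/p]$. Now fix the component $R'_{v_0}$ of $R_{\bar r,\mathbf k}^{\rm cris}$ corresponding to $X$; it sits inside a component $\bar R[[x_1,\dots,x_g]]$ of $R_\infty$ lying in the support, and $\bar R\otimes_{R^{\rm loc}} M_\infty[1/p]$ is a faithful module there (projectivity plus support equals a union of components). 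Faithfulness forces the image of $t$ in $R'_{v_0}[1/p]$ to vanish, hence $t=0$ in $R'_{v_0}$ by $p$-torsion-freeness. This gives density without ever needing to recognize an arbitrary support point as automorphic. To repair your write-up, replace the third step entirely by this "$t$ kills $M_n$, hence $M_\infty$, hence $t=0$ on the component by faithfulness" argument.
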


Let $\Xfrak_{\bar r,{\bf k}}^{\rm aut}$ be the union the components of the rigid analytic generic fiber $\Xfrak_{\bar r,{\bf k}}^{\rm cris}$ of $\Spf R_{\bar r,{\bf k}}^{\rm cris}$ that correspond to automorphic components of $\Spec(R_{\bar r, {\bf k}}^{\rm cris}[1/p])$. 
\begin{cor}\label{densefibres}
The set automorphic points in $\Xfrak_{\bar r,{\bf k}}^{\rm aut}$ is $R_{\bar r}$-dense in $\Xfrak_{\bar r,{\bf k}}^{\rm aut}$.
\end{cor}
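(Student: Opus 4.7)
The plan is to deduce Corollary~\ref{densefibres} directly from Theorem~\ref{O+densecomp} by translating Zariski density on $\Spec R_{\bar r,{\bf k}}^{\rm cris}[1/p]$ into vanishing of rigid analytic functions on $\Xfrak_{\bar r,{\bf k}}^{\rm aut}$. Suppose $f\in R_{\bar r}$ vanishes at every automorphic $\bar\Q_p$-point of $\Xfrak_{\bar r,{\bf k}}^{\rm aut}$; the goal is to show that its image in $\Gamma(\Xfrak_{\bar r,{\bf k}}^{\rm aut},\Ocal)$ is zero. The essential input, already cited in the construction of $\Xfrak_{\bar r,{\bf k}}^{\rm aut}$, is that $R_{\bar r,{\bf k}}^{\rm cris}[1/p]$ is formally smooth, hence regular and Jacobson.

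First I would write $R_{\bar r,{\bf k}}^{\rm cris}[1/p]=\prod_i A_i$ as its decomposition into connected components; because the ring is noetherian and regular, each $A_i$ is a regular domain whose spectrum is an irreducible component of $\Spec R_{\bar r,{\bf k}}^{\rm cris}[1/p]$. The corresponding orthogonal idempotents in the structure sheaf produce a decomposition $\Xfrak_{\bar r,{\bf k}}^{\rm cris}=\coprod_i Y_i$ of rigid spaces, and the $\bar\Q_p$-points of $Y_i$ are in canonical bijection with the maximal ideals of $A_i$ (continuous maps $R_{\bar r,{\bf k}}^{\rm cris}\to\bar\Q_p$ factor through $R_{\bar r,{\bf k}}^{\rm cris}[1/p]$ and then through exactly one $A_i$). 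By construction $\Xfrak_{\bar r,{\bf k}}^{\rm aut}$ is the sub-disjoint-union of those $Y_i$ for which $\Spec A_i$ is an automorphic component in the sense of Theorem~\ref{O+densecomp}.

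For each such automorphic $i$, the image $f_i$ of $f$ in $A_i$ vanishes on a Zariski-dense set of maximal ideals by Theorem~\ref{O+densecomp}. Since $A_i$ is a Jacobson domain, the intersection of a Zariski-dense set of maximal ideals is zero, so $f_i=0$; hence $f$ evaluates to zero at every $\bar\Q_p$-point of $Y_i$. As $Y_i$ is reduced (being an admissible open of the generic fiber of a reduced formal scheme) and its $\bar\Q_p$-points are dense in each affinoid cover, the image of $f$ in $\Gamma(Y_i,\Ocal)$ is zero. Taking the disjoint union over all automorphic $i$ yields the claimed $R_{\bar r}$-density. The one step requiring care is the identification of the rigid components $Y_i$ with the algebraic components $\Spec A_i$, so that ``automorphic'' has the same meaning in both settings; this is provided cleanly by the formal smoothness of $R_{\bar r,{\bf k}}^{\rm cris}[1/p]$, which separates connected components via orthogonal idempotents both in the ring and in the structure sheaf.
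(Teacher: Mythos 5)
Your proof is correct and follows essentially the same route as the paper: both arguments rest on the identification of the rigid analytic points of the Berthelot generic fiber with the closed points of $\Spec R[1/p]$ (cf.\ \cite[Lemma 7.1.9]{deJong}), under which Zariski density in the scheme is the same as $R$-density in the rigid space. Your component-by-component bookkeeping via idempotents and the appeal to $A_i$ being Jacobson are harmless but unnecessary extras — once you know the point sets agree, density of automorphic points in $\Spec A_i$ directly means any $f$ vanishing on them lies in the nilradical of the reduced ring $A_i$, i.e.\ is zero, with no need to invoke Jacobson-ness — so the paper simply states the equivalence of the two density notions and concludes.
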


Given a rigid space $\Xfrak$ over $\Q_p$ we write $|\Xfrak|$ for the underlying point set of $\Xfrak$. Similarly we write $|X|$ for the set of closed points of a $\Q_p$-scheme $X$. 
Let $R$ be a complete local noetherian $\Z_p$-algebra with finite residue field and let $\Xfrak$ denote the generic fiber of $R$ in the sense of Berthelot. Further let $X=\Spec R[1/p]$. Then we have $|X|=|\Xfrak|$, as, if we write $R=\Z_p\dbl T_1,\dots, T_n\dbr/(f_1,\dots, f_m)$, then both sets are identified with the ${\rm Gal}(\bar\Q_p/\Q_p)$-orbits in 
\[\{x=(x_1,\dots, x_n)\in \bar\Q_p\mid |x_i|<1\ \text{and}\ f_j(x)=0\},\] compare also \cite[Lemma 7.1.9]{deJong}.
Further a subset $Z\subset |X|=|\Xfrak|$ is dense in $X$ if and only it is $R$-dense in $\Xfrak$. This proves that the theorem implies the corollary.

\begin{proof}[Proof of the Theorem]
Define $R_{v_1}^{{\square}}$ as the universal ring pro-representing the functor of lifts of $\bar\rho|_{G_{v_1}}$. {We have to take care to the fact that we fixed the weight $W_\infty=\bigotimes_{v\in S'_p,w\in I_\infty(v)}W_w$. As a consequence, for $v\in S'_p$, let $R_v'$ be the quotient $R_{\bar\rho|_{\Gcal_{E_{\tilde{v}}}},\bf k_v}^{\rm cris}$ of $R_{\bar\rho|_{\Gcal_{E_{\tilde{v}}}}}$ where ${\bf k}_v$ is the set of Hodge-Tate weights associated to the highest weight of the algebraic representation $\bigotimes_{w\in I_\infty (v)}W_w$. Let $R_{V_0}'=R_{\bar r,\bf k}$ and $R^{\rm loc}=R_{v_1}^\square \hat\bigotimes_{v\in S_p}R'_v$} and $R_{\infty,g}=R^{\rm loc}\dbl x_1,\dots,x_g\dbr$. Then the patching construction (see \cite[\S$6$]{Thorne}) gives us, for $g$ big enough, a $R_{\infty,g}$-module $M_{\infty}$ of finite type whose support in $\Spec R_{\infty,g}$ is a union of irreducible components and $M_\infty[1/p]$ is a projective $R_\infty[1/p]$-module. Let $R$ be the quotient of $R^{\rm loc}$ corresponding to one of these irreducible components. Then $R\otimes_{R^{\rm loc}}M_{\infty}[1/p]$ is a faithful $R\dbl x_1,\dots,x_g\dbr[1/p]$-module. Moreover, $M_{\infty}$ is constructed as an inverse limit of spaces $M_n$, where $M_n$ is a quotient of a space of automorphic forms, on which the action of $R_{\infty,g}$ factors through a ring of Hecke operators.\\
Lemma $3.4.12$ in \cite{Kisinmodularity} shows that the irreducible components of $\Spec R^{\rm loc}\dbl x_1,\dots, x_g\dbr$ are of the form $\Spec(\bar R)$ where $\bar R=\bigotimes_{v\in S_p\cup\{v_1\}} \bar R'_v$ and $\bar R'_v$ is an irreducible component of $R'_v$ if $v|p$ and of $R_{v_1}^\square$ if $v=v_1$. Now fix $\bar R'_{v_0}$ an automorphic irreducible component of $R_{\bar r, {\bf k}}^{\rm cris,\square}$. This means that there exists an irreducible component $\bar R'\dbl x_1,\dots,x_g\dbr$ of $R^{\rm loc}\dbl x_1,\dots, x_g\dbr$ containing $\bar R'_{v_0}$. If $t\in R_{\bar r, {\bf k}}^{\rm cris,\square}$ vanishes on each automorphic point, then $t$ acts trivially on the spaces $M_n$, and so on $M_{\infty}$. Now $R\otimes_{R^{\rm loc}}M_\infty[1/p]$ being a faithful $R[1/p]$-module, the image of $t$ in $R'_{v_0}[1/p]$ is zero which implies that $t=0$ in $R'_{v_0}$ since $R'_{v_0}$ is $p$-torsion free. 
\end{proof}
\subsection{Variation on the density of trianguline representations.}

In order to deduce density statements in the generic fiber of a local deformation ring from density statements in the space of trianguline representations one needs to show that the image of the space of trianguline representations is dense. This density result is included in the work of Chenevier \cite{Chtrianguline} and Nakamura \cite{Nakamura3}. In our case the situation will be a bit more restrictive: we only can make a statement about the components of the space of trianguline representations that are met by some eigenvariety. Hence we need to sharpen this density result a bit.
In order to do so we need to compare the fibers of the space of trianguline representations over strongly dominant algebraic weights with Kisin's crystalline deformation rings.

Let ${\bf k}=(k_{\sigma,i})$ be a strongly dominant weigh and let us continue to assume that $\bar r$ is absolutely irreducible.
In order not to overload the notation let us write $A_{\bf k}=R_{\bar r,{\bf k}}^{\rm cris}$ for the rest of this subsection.
Further (for the remaining of this subsection) we write
\begin{align*}
\Xfrak_{\bf k}&=(\Spf A_{\bf k})^{\rm rig}\\
X_{\bf k}&=\Spec (A_{\bf k}[1/p]).
\end{align*}  
\begin{defn}
We say that a crystalline representation $r_x:\Gcal_K\rightarrow \GL_d(\kappa(x))$ corresponding to a rigid analytic point $x\in \Xfrak_{\bf k}$ respectively to a closed point $x\in X_{\bf k}$ is \emph{regular} if the eigenvalues of the Frobenius on the Weil-Deligne representation $\WD(D_{\rm cris}(r_x))$ associated to $r_x$ are pairwise distinct. We further say that $x$ (resp.~$r_x$) is \emph{generic} if all $d!$ filtrations on $D_{\rm cris}(r_x)$ induced by an ordering of the Frobenius eigenvalues are in general position with respect to all Hodge-Tate filtrations (with respect to all embeddings $\psi:K\hookrightarrow L$). 
\end{defn}

\begin{rem}
Note that if $r_x$ is crystalline generic, then all parameters $(\delta_1,\dots,\delta_d)$ of all possible triangulations of its $(\varphi,\Gamma)$-module $D^\dagger_{\rm rig}(r_x)$ have the property that $(\delta_1|_{\Ocal_K^\times},\dots,\delta_d|_{\Ocal_K^\times})$ is algebraic of strongly dominant weight ${\bf k}$.
Moreover, given such a trianguline filtration $\Fil_i$ of $D^\dagger_{\rm rig}(r_x)$ the extensions
\[0\longrightarrow \Fil_i\longrightarrow \Fil_{i+1}\longrightarrow \Rcal_{k(x)}(\delta_i)\longrightarrow 0\]
 are non split. This may be checked after taking the quotient by $\Fil_{i-1}$ and hence we are reduced to the $2$-dimensional case where one easily checks that the direct sum of two $(\phi,\Gamma)$-modules $\Rcal(\delta)\oplus\Rcal(\delta')$ such that $\delta|_{\Ocal_K^\times}=\delta_{\Wcal}(a)\neq\delta_\Wcal(a')=\delta'|_{\Ocal_K^\times}$ is not generic: One of the two possible triangulations is not in general position with the Hodge filtration. 
It follows that $x$ is in the image of $X(\bar r)^{\rm reg}$ under the map $X(\bar r)\rightarrow\Xfrak_{\bar r}$.
\end{rem}

By \cite[Theorem {2.5.5}]{Kisinpstdeform} there is a (locally free) $A_{\bf k}\otimes_{\Z_p}K_0$ -module on $X_{\bf k}$ together with an ${\rm id}\otimes\varphi$-linear automorphism $\Phi:D\rightarrow D$ such that \[(D,\Phi)\otimes \kappa(x)\cong D_{\rm cris}(r_x)\] for all closed points $x\in X$. It follows that there is a Zariski open subset $X_{\bf k}^{\rm reg}\subset X_{\bf k}$ with closed points precisely given by the regular points. \\Especially there is a Zariski open subset $\Xfrak_{\bf k}^{\rm reg}\subset \Xfrak_{\bf k}$with rigid analytic points precisely given by the regular points in $\Xfrak_{\bf k}$.

Recall that we have a map $\omega:\Scal^{\rm ns}(\bar r)\rightarrow \Wcal^d$ to the weight space. We further view the weight ${\bf k}$ as an element of $\Wcal^d$. 
\begin{prop}\label{fibsvsKisin}
The map $\omega^{-1}({\bf k})\rightarrow \Xfrak_{\bar r}$ induces a map $g_{\bf k}:\omega^{-1}({\bf k})\rightarrow \Xfrak_{\bf k}$ which is \'etale over $\Xfrak_{\bf k}^{\rm reg}$. Further $g_{\bf k}^{-1}(\Xfrak^{\rm reg}_{\bf k})$ is open and dense in $\omega^{-1}({\bf k})$.\\
\end{prop}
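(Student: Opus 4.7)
The plan is to proceed in three steps: first construct $g_{\bf k}$ by showing the natural map $\omega^{-1}({\bf k})\to\Xfrak_{\bar r}$ factors through $\Xfrak_{\bf k}$; then verify that $g_{\bf k}^{-1}(\Xfrak_{\bf k}^{\rm reg})$ is open and dense in $\omega^{-1}({\bf k})$; and finally establish étaleness over $\Xfrak_{\bf k}^{\rm reg}$ by a deformation-theoretic argument.

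For the construction of $g_{\bf k}$, I would first observe that $\omega^{-1}({\bf k})$ is smooth, hence reduced, of dimension $1+[K:\Q_p]d(d-1)/2$. Indeed, since ${\bf k}$ is strongly dominant, the fiber of $\Tcal^d\to\Wcal^d$ over $\{{\bf k}\}$ is (an open subset of) $\Gbb_m^d$ contained in $\Tcal^d_{\rm reg}$, and $\omega^{-1}({\bf k})$ is the base change of the smooth morphism $\Scal^{\rm ns}_d\to\Tcal^d_{\rm reg}$ of Theorem \ref{constructionSd}(iii). By Proposition \ref{deRham} every point of $\omega^{-1}({\bf k})$ gives a de Rham representation of labeled Hodge-Tate weights ${\bf k}$, and the proof of Corollary \ref{Zopeninfib}(i) shows that the Zariski-open subset $Z_{\rm cris}({\bf k})$ of points where the representation is crystalline is non-empty; its complement lies over a finite union of codimension-one subvarieties of $\Gbb_m^d$ (cut out by $\delta_i(\varpi)/\delta_j(\varpi)=p^{\pm[K_0:\Q_p]}\prod_\sigma\sigma(\varpi)^{k_{\sigma,i}-k_{\sigma,j}}$), so $Z_{\rm cris}({\bf k})$ is Zariski-dense in $\omega^{-1}({\bf k})$. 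Since $\Xfrak_{\bf k}$ is Zariski-closed in $\Xfrak_{\bar r}$ by Kisin's construction and $\omega^{-1}({\bf k})$ is reduced, the factorization $g_{\bf k}:\omega^{-1}({\bf k})\to\Xfrak_{\bf k}$ follows.

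For density, $\Xfrak_{\bf k}^{\rm reg}$ is Zariski-open in $\Xfrak_{\bf k}$ (cut out by the non-vanishing of the discriminant of $\varphi^{[K_0:\Q_p]}$ on Kisin's universal filtered $\varphi$-module), whence $g_{\bf k}^{-1}(\Xfrak_{\bf k}^{\rm reg})$ is Zariski-open in $\omega^{-1}({\bf k})$. Its complement lies in the preimage under the smooth map $\omega^{-1}({\bf k})\to\Gbb_m^d$, $(D,\Fil_\bullet,\delta,\nu_d)\mapsto(\delta_i(\varpi))_i$, of the closed subvariety of $\Gbb_m^d$ defined by $\delta_i(\varpi)/\delta_j(\varpi)=\prod_\sigma\sigma(\varpi)^{k_{\sigma,i}-k_{\sigma,j}}$ for some $i\neq j$ (since at a crystalline point with parameters of weight ${\bf k}$, the Frobenius eigenvalues are $\delta_i(\varpi)\prod_\sigma\sigma(\varpi)^{-k_{\sigma,i}}$). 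This is a finite union of codimension-one subvarieties, so by flatness of the map to $\Gbb_m^d$ its preimage has codimension at least one in $\omega^{-1}({\bf k})$. For étaleness over $\Xfrak_{\bf k}^{\rm reg}$, both $\omega^{-1}({\bf k})$ and $\Xfrak_{\bf k}^{\rm reg}$ are smooth of the same dimension $1+[K:\Q_p]d(d-1)/2$ (the latter by the formal smoothness of $R^{\rm cris}_{\bar r,{\bf k}}[1/p]$ \`a la Kisin, combined with the standard dimension computation for absolutely irreducible $\bar r$), so it suffices to show the differential of $g_{\bf k}$ is injective at every point of $g_{\bf k}^{-1}(\Xfrak_{\bf k}^{\rm reg})$. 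At such a point the crystalline representation has distinct Frobenius eigenvalues, and the trianguline filtration corresponds via $D_{\rm cris}$ to a $\varphi^{[K_0:\Q_p]}$-stable flag on the universal filtered $\varphi$-module; since the eigenvalues remain distinct in any first-order crystalline deformation, this flag admits a unique lift, so any deformation of the triangulated object whose image in $\Xfrak_{\bf k}$ is trivial is itself trivial.

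The main technical obstacle is the deformation-theoretic comparison in the étaleness step, where one must identify the tangent space to $\omega^{-1}({\bf k})$ at a trianguline point with that of $\Xfrak_{\bf k}^{\rm reg}$ through the functor $D_{\rm cris}$. A cleaner implementation may be to construct an explicit étale local section of $g_{\bf k}$ over $\Xfrak_{\bf k}^{\rm reg}$ by passing to the finite étale cover on which the $\varphi^{[K_0:\Q_p]}$-eigenvalues of Kisin's universal filtered $\varphi$-module are ordered, and then inducing a triangulation from the associated flag; the resulting map to $\omega^{-1}({\bf k})$ is then seen to be a local inverse to $g_{\bf k}$.
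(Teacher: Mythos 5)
Your argument is essentially correct and its core mechanism coincides with the paper's: the unique lifting of Frobenius-stable flags on $D_{\rm cris}$, guaranteed by the distinctness of Frobenius eigenvalues, is exactly what makes $g_{\bf k}$ unramified over $\Xfrak_{\bf k}^{\rm reg}$. The main difference is how this is packaged into an étaleness proof. You use smoothness of both source and target plus a dimension count ($1 + [K:\Q_p]d(d-1)/2$ on both sides, the latter from Kisin's formal smoothness) to reduce to injectivity of the differential at each point, which you then deduce from the unique lift of the flag at the tangent-space level. The paper instead verifies Huber's infinitesimal lifting criterion directly: for an arbitrary square-zero extension $I\subset A$ of affinoid rings with a commuting square, the Frobenius-stable filtration of $\overline D=D/I$ lifts uniquely to $D$, and this is carried out by an explicit matrix computation (choose a basis adapted to $\overline{\Fil}^\bullet$, write the matrix of $\Phi$, use that $a_{ij}\in I$ for $i>j$ and $a_{ii}\neq a_{jj}$, solve for the unique lift of the line $\overline{\Fil}^1$, induct). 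Your route is a bit cleaner conceptually and sidesteps the by-hand linear algebra, but leans on the smoothness of $\Xfrak_{\bf k}^{\rm reg}$ as an external input; the paper's check is self-contained at the cost of being more computational. Both are valid. Your suggested variant via an étale local section obtained by ordering the Frobenius eigenvalues is also a legitimate alternative, essentially the same idea read backwards.

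One point you should spell out a bit more carefully in the tangent-space step: a tangent vector of $\omega^{-1}({\bf k})$ at $x$ carries not just a deformation of the $(\phi,\Gamma)$-module $D$ and the filtration $\Fil_\bullet$, but also of the character $\delta$ (subject to the constraint that $\delta|_{\Ocal_K^\times}$ stays equal to ${\bf k}$) and of the trivialization $\nu_d$. You should observe that once $D$ and $\Fil_\bullet$ are rigidified, the $\delta_i$ are forced (the parameters of the triangulation are determined by the graded pieces), and the $\nu_d$ ambiguity is absorbed by the projectivization $\Scal^{\rm ns}_d=\Pbb(\Mcal_V^\vee)$ together with the scalar automorphisms of a non-split triangulated $D$, as is implicit in Proposition \ref{Xreglarge}. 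This is implicit in your dimension count but worth flagging since it is where the naive ``extra $\Gbb_m$'' from $\nu_d$ disappears.
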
 
\begin{proof}
By Corollary $\ref{Zopeninfib}$ the map $\omega^{-1}({\bf k})\rightarrow \Xfrak_{\bar r}$ generically factors over $\Xfrak_{{\bf k}}$. As $\Xfrak_{{\bf k}}\subset \Xfrak_{\bar r}$ is Zariski-closed the first claim follows. It is further easy to see that the preimage of $\Xfrak_{\bf k}^{\rm reg}$ is open and dense. It remains to prove the claim on \'etaleness which, in the adic set up  can be checked using the infinitesimal lifting criterion, cf.~\cite[Definition 1.6.5]{Huber}. Consider an affinoid algebra $A$ with ideal $I\subset A$ satisfying $I^2=0$ and the diagram
\[\begin{xy}
\xymatrix{
\Sp(A/I)\ar[r] \ar[d]&\omega^{-1}({\bf k})\ar[d]\\
\Sp(A) \ar[r] &\Xfrak_{{\bf k}}^{\rm reg}.
}
\end{xy}\]
This diagram gives rise to a family of filtered $\phi$-modules over $A$ and we write $(D,\Phi_D)$ for the associated family of Weil-Deligne representations on $\Sp(A)$. Further the upper arrow in the diagram gives us a filtration 
\[0\subsetneq \overline\Fil^1\subsetneq \dots\subsetneq \overline\Fil^{d-1}\subset \overline\Fil^d=\overline D=D/I\]
by subspaces that are locally on $\Sp(A)$ direct summands and stable under the Frobenius $\Phi_{\overline D}=\Phi_D\mod I$ and we have to prove that this filtration uniquely lifts to a $\Phi_D$-stable filtration $\Fil^\bullet$ of $D$ such that locally on $\Sp(A)$ the $\Fil^i$ are direct summands of $D$.
After localizing on $\Sp(A)$ we may assume that $D$ is free and that all the $\overline{\Fil}^i$ are direct summands of $\overline D$. We show that we can lift $\overline \Fil^1$ uniquely to a $\Phi$-stable direct summand of $D$. The rest will follow by induction. 
Let us chose a basis $\bar e_1,\dots, \bar e_d$ of $\overline D$ such that $\overline \Fil^i$ is generated by $\bar e_1,\dots, \bar e_i$ and take arbitrary lifts $e_j$ of the $\bar e_j$ in $D$. We write $A=(a_{ij})$ for the matrix of $\Phi$ in this basis. Then the above implies that $a_{ij}\in I$ for $i>j$ and that $a_{ii}\neq a_{jj}$ for all $i\neq j$, as this is true modulo all maximal ideals of $A$ by definition of $\Xfrak_{\bf k}^{\rm reg}$. We have to show that there exists uniquely determined $\lambda_2\dots,\lambda_d\in I$ and a $\mu\in A^\times$ such that 
\[\Phi(e_1+\sum_{j=2}^d\lambda_j e_j)=\mu(e_1+\sum_{j=2}^d\lambda_j e_j).\]
However, this comes down to showing that 
\[(A-\mu E)e_1+\sum_{j=2}^d \lambda_j(A-\mu E)e_j=0\]
has (up to scalar) a unique solution with $\lambda_i\in I$ which is an easy consequence of $a_{ij}\in I$ for $i>j$ and $a_{ii}\neq a_{jj}$ for $i\neq j$.
\end{proof}

\begin{prop}\label{genericO+open}
There is a Zariski open and dense subset $X^{\rm gen}_{\bf k}\subset X_{\bf k}$ whose closed points are precisely the generic crystalline representations.  Especially there is an $R_{\bar r}$-open subset $\Xfrak_{\bf k}^{\rm gen}\subset \Xfrak_{\bf k}$ whose rigid analytic points are precisely the generic crystalline representations. 
\end{prop}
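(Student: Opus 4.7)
The plan is to realize $X^{\rm gen}_{\bf k}$ as the locus inside $X_{\bf k}^{\rm reg}$ where finitely many ``genericity determinants'', built from Kisin's universal filtered $\phi$-module, are non-zero, and then to show that this locus meets every irreducible component of $X_{\bf k}$. I would work throughout with the universal filtered $\phi$-module $(D,\Phi,\Fil^\bullet_{\sigma,\mathrm{Hodge}})$ on $X_{\bf k}$ furnished by \cite[Theorem~2.5.5]{Kisinpstdeform}, whose specialization at a closed point $x$ is $D_{\mathrm{cris}}(r_x)$ with all its extra structure. For openness, on a finite étale cover of $X_{\bf k}^{\rm reg}$ trivializing the $S_d$-torsor of orderings of the eigenvalues of $\Phi^{[K_0:\Q_p]}$, the module $D$ splits as $\bigoplus_{i=1}^{d}D_i$ into $\Phi$-stable rank-$1$ summands. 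For each $\tau\in S_d$ and each embedding $\sigma\colon K\hookrightarrow L$, the flag $\Fil^i_\tau D=\bigoplus_{j\le i}D_{\tau(j)}$ is in general position with $\Fil^\bullet_{\sigma,\mathrm{Hodge}}$ precisely when the natural maps
\[
(\Fil^i_\tau)_\sigma\oplus \Fil^{-k_{\sigma,i+1}}_{\sigma,\mathrm{Hodge}}\longrightarrow (D\otimes_{K_0}K)_\sigma,\qquad i=1,\dots,d-1,
\]
are isomorphisms, i.e.~the non-vanishing of certain determinants. The intersection over all $(\tau,\sigma)$ is $S_d$-invariant and descends to the desired Zariski open $X^{\rm gen}_{\bf k}\subset X_{\bf k}^{\rm reg}\subset X_{\bf k}$.

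For density, the key inputs are that $A_{\bf k}[1/p]$ is formally smooth by \cite{Kisinpstdeform}, so that $X_{\bf k}$ is smooth and its irreducible components coincide with its connected components, and that the set of regular crystalline representations is Zariski dense in $X_{\bf k}$ (the discriminant of $\Phi$ does not vanish identically on any component, since it is a non-trivial algebraic function on a family with ``many'' Frobenius values). Fixing a component $C$ and a point $x\in C\cap X_{\bf k}^{\rm reg}$, I would use the moduli-theoretic description of $A_{\bf k}$ to deform the Hodge filtration of $D_{\mathrm{cris}}(r_x)$ inside the product of flag varieties $\prod_\sigma \mathrm{Fl}_{{\bf k}_\sigma}$ independently of the underlying $\phi$-module: the genericity conditions cut out the intersection of the Bruhat open cells attached to the various $\tau\in S_d$, which is a non-empty Zariski open subset because ${\bf k}$ is strongly dominant. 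Smoothness of $C$ at $x$ together with the surjectivity on tangent spaces of the period map onto the ``Hodge filtration direction'' (which is the tangent-space incarnation of Kisin's formal smoothness result) then produces a point of $C\cap X^{\rm gen}_{\bf k}$.

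The second statement of the proposition follows by transporting $X^{\rm gen}_{\bf k}$ along the identification $|\Xfrak_{\bf k}|=|X_{\bf k}|$ recalled in the proof of Corollary \ref{densefibres}, giving an $R_{\bar r}$-open subset $\Xfrak^{\rm gen}_{\bf k}\subset\Xfrak_{\bf k}$ with the required points. The main obstacle in the plan is the density step: although the genericity condition is manifestly open from the universal filtered $\phi$-module, establishing that each component of the crystalline deformation space genuinely contains a point where all $d!$ orderings of the Frobenius eigenvalues yield filtrations transverse to every Hodge filtration requires a moduli-theoretic input — the dominance of the period map from $X_{\bf k}$ to the Hodge flag variety — which is essentially contained in Kisin's construction of $A_{\bf k}$ but needs to be extracted carefully.
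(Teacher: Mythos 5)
Your overall strategy is the same as the paper's: realize $X^{\rm gen}_{\bf k}$ as an open genericity locus inside $X^{\rm reg}_{\bf k}$ using universal data on $X_{\bf k}$, then use smoothness of $X_{\bf k}$ (so components are connected) and deformation of the Hodge filtration to show each component contains a generic point. The density part is the right idea and is what \cite[Proposition 4.3]{Chtrianguline} does, which the paper simply cites; your tangent-space dominance argument is essentially the content of that reference, and your flag out that this needs to be ``extracted carefully'' from Kisin's construction is a fair self-assessment.

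However, there is a genuine gap in your openness argument. You posit a universal filtered $\phi$-module $(D,\Phi,\Fil^\bullet_{\sigma,{\rm Hodge}})$ ``furnished by Kisin, Theorem 2.5.5'', but as the paper itself spells out, that theorem only produces the universal $\phi$-module $(D,\Phi)$ with $D$ locally free over $A_{\bf k}\otimes_{\Z_p}K_0$ and specialization $D_{\rm cris}(r_x)$ at each closed point --- it does \emph{not} directly give a universal Hodge filtration. Asserting that genericity is cut out by determinant conditions only makes sense once one knows the Hodge filtration varies algebraically over $X_{\bf k}$, and establishing exactly this is the technical heart of the paper's proof: one invokes the universal Breuil--Kisin module $\Mfrak$ over $A_{\bf k}\hat\otimes_{\Z_p}W\dbl u\dbr$ (from the quotient $R^{\le h}_{\bar r}$), passes to $\Nfrak=\Mfrak[1/p]$, and uses the identification
\[
\Nfrak/u\Nfrak\otimes_W K\;\cong\;\Phi(\phi^\ast\Nfrak)/E(u)\Phi(\phi^\ast\Nfrak)
\]
(\cite[2.1, 2.2, (2.5.3)]{Kisinpstdeform}, \cite[5.a.1, (5.30)]{PappasRapo}) to realize the pointwise Hodge filtration on $D_{\rm cris}(r_x)\otimes_{K_0}K$ as induced by the algebraic filtration by $E(u)^i\Nfrak/E(u)\Phi(\phi^\ast\Nfrak)$. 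Without this step your determinant conditions have no coherent algebraic meaning over $X_{\bf k}$, so your openness argument is incomplete. Also, your auxiliary $S_d$-torsor/finite \'etale cover is not needed: once the filtration is algebraic in the family, general position is manifestly Zariski-open without splitting $D$ into $\Phi$-eigenlines (which need not be possible integrally even over $X^{\rm reg}_{\bf k}$). The final reduction from $X_{\bf k}$ to $\Xfrak_{\bf k}$ via $|X_{\bf k}|=|\Xfrak_{\bf k}|$ is correct and matches the paper.
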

\begin{proof}
After twisting with some power of the cyclotomic character we may assume that all Hodge-Tate weights are non-negative. The ring $A_{\bf k}$ is a quotient of the universal deformation ring $R_{\bar r}$ and (with the notations of \cite[1.6.4]{Kisinpstdeform}) it is even a quotient of $R_{\bar r}^{\leq h}$ for some $h\gg 0$. By construction of the quotient $R_{\bar r}^{\leq h}$ there is a free ${A_{\bf k}\hat\otimes_{\Z_p}W}\dbl u\dbr$-module $\Mfrak$ together with an injection $\Phi:\Mfrak\rightarrow \Mfrak$ that is semi-linear with respect to the identity on $A$, the Frobenius on $W=\Ocal_{K_0}$ and $u\mapsto u^p$ and whose cokernel is killed by $E(u)^h$. Here $E(u)$ denotes the minimal polynomial of a chosen uniformizer of $K$ over $K_0$. 

By \cite[Theorem 2.2.5]{Kisinpstdeform} this module has the property that for all closed points $x\in X_{\bf k}$ with corresponding crystalline representation $r_x:\Gcal_K\rightarrow \kappa(x)$ one has a canonical isomorphism
\[D_{\rm cris}(r_x)\cong (\Mfrak/u\Mfrak,\Phi\,{\rm mod} \,u)\otimes_{A_{\bf k}} \kappa(x).\]
Let us write $\Nfrak=\Mfrak[1/p]$. By the construction in \cite[2.1, 2.2, (2.5.3) ]{Kisinpstdeform} resp.~\cite[5.a.1, (5.30)]{PappasRapo} we have an isomorphism
\[ \Nfrak/u\Nfrak\otimes_WK\cong \Phi(\phi^\ast\Nfrak)/E(u)\Phi(\phi^\ast \Nfrak)\]
under which (for all closed points $x\in X_{\bf k}$) the Hodge filtration on $D_{\rm cris}(r_x)\otimes_{K_0}K$ is induced by the filtration on $ \Phi(\phi^\ast\Nfrak)/E(u)\Phi(\phi^\ast \Nfrak)$ by the $E(u)^i\Nfrak/E(u)\Phi(\phi^\ast \Nfrak)$. As the condition for two possible filtrations to be in general position is an open condition it follows that there is a Zariski open subset $X_{\bf k}^{\rm gen}\subset X_{\bf k}$ whose closed points are precisely the generic crystalline points. 

It remains to show that every connected component of $X_{\bf k}$ contains a generic point (recall that $X_{\bf k}$ is formally smooth and hence every connected component is irreducible). However, this is proven exactly as in \cite[Proposition 4.3]{Chtrianguline}.
\end{proof}

We now return to the setup with eigenvarieties. The data of $F$, $E$, $G$, $H$, $\bar\rho$ are checking the properties of section \ref{eigenvarieties}, so that we can consider the eigenvariety $Y_{\bar\rho}=Y(W_\infty,S,e)_{\bar\rho}$ with $S=S_p\cup\{v_1\}$ and $e$ a well suited idempotent.
\begin{defn}
 Write $X(\bar\rho,W_\infty)^{\rm aut}$ for the union of irreducible components of $X(\bar r)$ whose intersection with the regular part $X(\bar r)^{\rm reg}\subset X(\bar r)$ is met by any of the eigenvarieties $Y(W_\infty,S,e)_{\bar\rho}$ under the map defined in Theorem $\ref{maptofinslopespace}$ for a some $(S,e)$.
 \end{defn}
Using Proposition \ref{genericO+open} and Corollary \ref{densefibres}, one can see that $X(\bar\rho,W_\infty)^{\rm aut}$ can be characterized as the union of irreducible components of the space $X(\bar r)$ whose intersection with the regular part $X(\bar r)^{\rm reg}$ contains a crystalline automorphic point of strongly dominant Hodge-Tate weights (note that in our context the assumption that the Hodge-Tate weights are strongly dominant is a consequence of the automorphy). Note that, as a consequence of Lemma \ref{existence}, Theorem \ref{O+densecomp}, Proposition \ref{genericO+open} and the fact that generic crystalline points lie in $X(\bar r)^{\rm reg}$, the space $X(\bar\rho,W_\infty)^{\rm aut}$ is non empty.



\begin{theo}\label{crystrepdense}
The image of $X(\bar\rho,W_\infty)^{\rm aut}$ in $\Xfrak_{\bar\rho_{w_0}}=\Xfrak_{\bar r}$ is Zariski-dense in a union of irreducible components of $\Xfrak_{\bar\rho_{w_0}}$. 
\end{theo}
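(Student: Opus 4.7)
The plan is to adapt the density arguments of Chenevier \cite{Chtrianguline} and Nakamura \cite{Nakamura3}, which use generic crystalline points together with all of their refinements, keeping track that the relevant refinements lie in the automorphic union of components $X(\bar\rho, W_\infty)^{\rm aut}$. The key is to produce a test point whose entire refinement orbit stays in the automorphic locus, and then to run the Chenevier--Nakamura local tangent-space computation at that point.

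First I would produce such a test point. By Lemma \ref{existence} there exists a choice of $W_\infty$ for which $X(\bar\rho, W_\infty)^{\rm aut}$ is non-empty, and in particular contains an automorphic irreducible component of $\Xfrak_{\bar r,{\bf k}}^{\rm aut}$ for some strongly dominant weight ${\bf k}$. Combining Theorem \ref{O+densecomp} (equivalently Corollary \ref{densefibres}) with Proposition \ref{genericO+open}, the set of \emph{generic} crystalline automorphic points is Zariski-dense in this component; fix such a point $x_0$. Since $x_0$ is generic, Proposition \ref{fibsvsKisin} identifies $g_{\bf k}^{-1}(x_0)$ with the $d!$ orderings of the crystalline Frobenius eigenvalues, each giving a point of $X(\bar r)^{\rm reg}$ corresponding to a refinement. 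The global automorphic representation $\Pi$ underlying $x_0$ is crystalline at $w_0$ with distinct Frobenius eigenvalues, so $\Pi_{v_0}$ is an irreducible unramified principal series; each ordering of its Satake parameters yields a finite-slope classical point on the eigenvariety $Y(W_\infty, S, e)_{\bar\rho}$ via Proposition \ref{points}, which by Theorem \ref{maptofinslopespace} maps to the corresponding refinement in $X(\bar r)^{\rm reg}$. Consequently every refinement of $x_0$ lies in $X(\bar\rho, W_\infty)^{\rm aut}$.

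Having secured these properties (generic, non-critical, and all $d!$ refinements automorphic) at $x_0$, I would then run the local density argument of \cite[\S4]{Chtrianguline} and \cite{Nakamura3}: using Lemma \ref{locallyaproduct} and a tangent-space computation at each of the $d!$ refinements of $x_0$, one checks that the images under $f$ of small neighborhoods of these refinements collectively contain a Zariski-open subset of the irreducible component of $\Xfrak_{\bar r}$ through $x_0$. Varying $x_0$ in $\Xfrak_{\bar r, {\bf k}}^{\rm aut}$ and using that a subset of $\Xfrak_{\bar r}$ whose $R_{\bar r}$-closure contains a Zariski-open subset of some irreducible component must have closure equal to a union of irreducible components, we conclude. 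The main obstacle is precisely this last step: one must verify that the Chenevier--Nakamura local tangent-space argument carries over to the case of an arbitrary finite extension $K/\Q_p$ and that the $d!$ refinements, each of which contributes a trianguline slice of dimension $1+[K:\Q_p]d(d+1)/2$, collectively cover enough directions in the $(1+d^2[K:\Q_p])$-dimensional tangent space of $\Xfrak_{\bar r}$ at $x_0$ to produce an open subset of a full-dimensional component.
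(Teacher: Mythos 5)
Your proposal follows essentially the same strategy as the paper's own proof: locate a generic crystalline automorphic point whose full refinement orbit lies in $X(\bar\rho,W_\infty)^{\rm aut}$, and then invoke the Chenevier--Nakamura local argument. However, there is a missing hypothesis that the paper's proof makes explicit and that the Chenevier--Nakamura argument genuinely requires: the test point $r$ must also satisfy the condition that the eigenvalues $\lambda_1,\dots,\lambda_d$ of the Frobenius on $\WD(D_{\rm cris}(r))$ obey $\lambda_i\lambda_j^{-1}\notin p^{\Z}$ for all $i,j$. This is distinct from genericity (which concerns the position of the $d!$ Frobenius-stable flags relative to the Hodge filtrations) and from non-criticality of the refinements. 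Without this numerical condition on the Frobenius eigenvalues, the surjectivity statement on tangent spaces — that the $d!$ trianguline slices cover all directions of the full deformation space — does not hold, and this is precisely the ``main obstacle'' you gesture at in your last sentence without resolving it. The paper handles this by noting that the locus of generic points in a connected component $C$ of $\Xfrak_{\bar r,{\bf k}}^{\rm cris}$ satisfying this eigenvalue condition is Zariski-open and dense in $C$, so it can be imposed simultaneously with genericity, automorphy, and membership in the smooth locus $X^{\rm sm}$ of the Zariski closure of the image.

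Two smaller points. First, you should arrange explicitly that the chosen test point lies in $X^{\rm sm}$, since the tangent-space computation takes place there; the paper does this by fixing a connected component $\mathcal{C}$ of $X(\bar\rho,W_\infty)^{\rm aut}\cap X(\bar r)^{\rm reg}$ meeting $\pi^{-1}(X^{\rm sm})$ from the start. Second, Proposition~\ref{fibsvsKisin} gives étaleness of $g_{\bf k}$ over the regular locus; the identification of the fiber over a generic point with the $d!$ orderings of Frobenius eigenvalues comes rather from Lemma~\ref{nctrianggradedpieces} and the genericity condition, not from the étaleness statement itself. Similarly, one should allow the auxiliary eigenvariety data $(S',e')$ (and the tame level) to vary when realizing each of the $d!$ refinements of the automorphic representation $\Pi$ as classical points, as the paper does.
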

\begin{proof}
Let us write $X$ for the Zariski-closure of the image of $X(\bar\rho,W_\infty)^{\rm aut}$ for the moment. Following the proof of \cite[4.5]{Chtrianguline} and \cite[Theorem. 4.3]{Nakamura3}, we are reduced to show that there exists a generic crystalline point $r \in X^{\rm sm}$ with the following additional properties
\begin{enumerate}
\item[(i)] if $\lambda_1,\dots,\lambda_d$ are the (pairwise distinct) eigenvalues of the Frobenius on $\WD(D_{\rm cris}(r))$, then $\lambda_i\lambda_j^{-1}\notin p^\Z$ for all $i,j$.
\item[(ii)] the space $X(\bar\rho,W_\infty,e)^{\rm aut}$ contains all possible triangulations of the representation $r$.
\end{enumerate}
Here $X^{\rm sm}\subset X$ is the smooth locus which is Zariski-open and dense in $X$.

Let us write $\pi:X(\bar\rho,W_\infty)^{\rm aut}\rightarrow \Xfrak_{\bar r}$ for the canonical projection that by definition factors over $X$. 

Fix a connected component $\mathcal{C}\subset X(\bar \rho,W_\infty)^{\rm aut}\cap X(\bar r)^{\rm reg}$ such that $\pi^{-1}(X^{\rm sm})\cap \mathcal{C}\neq \emptyset$ and hence $\pi^{-1}(X^{\rm sm})\cap\Ccal$ is Zariski-open and dense in $\Ccal$. Then, by definition of $X(\bar\rho,W_\infty)^{\rm aut}$, there is a pair $(S,e)$ such that $Y_{\bar\rho}=Y(W_\infty,S,e)_{\bar\rho}\neq\emptyset$ and a point $y_0\in Y_{\bar \rho}$ such that $f(y_0)\in\Ccal$. Here $f:Y_{\bar\rho}\rightarrow X(\bar\rho,W_\infty)^{\rm aut}$ is the map defined by Theorem $\ref{maptofinslopespace}$.

We claim that there is classical point $y_1\in Y_{\bar\rho}$ such that its image $r_{y_1}$ in $\Xfrak_{\bar r}$ lies on an automorphic component $C$ of $\Xfrak_{{\bf k}}\subset \Xfrak_{\bar r}$ for some strongly dominant weight ${\bf k}=\omega_Y(y_1)$ such that $C\cap X^{\rm sm}$ is non-empty (and hence Zariski-open and dense in $C$).
Indeed it suffices to construct $y_1$ such that $r_{y_1}\in C$ and $C\cap X^{\rm sm}$ is non-empty: as $y$ is associated with an automorphic representation the component $C\subset \Xfrak_{{\bf k}}$ is automatically automorphic.

Let $U\subset X(\bar r)^{\rm reg}\cap X(\bar\rho,W_\infty)^{\rm aut}$ be a connected open neighborhood of a chosen point $x_0\in X(\bar r)^{\rm reg}\cap X(\bar\rho,W_\infty)^{\rm aut}$ {such that $\omega_d|_U$ has connected fibers. Indeed such a neighborhood exists by Lemma $\ref{locallyaproduct}$.
As $\pi^{-1}(X^{\rm sm})\cap \Ccal$ is Zariski-open and dense in $\Ccal$, we find that $\pi^{-1}(X^{\rm sm})\cap U$ is non-empty and hence it is Zariski-open and dense in $U$.  

Let $V\subset Y_{\bar\rho}$ be} a quasi-compact neighborhood of $y_0$ such that $f(V)\subset U$. 
Then there are constants  $M_1,\dots, M_d$ such that
\[M_i\geq {\rm val}_x(\psi_{v_0}(t_{v_0,1}\dots t_{v_0,i})(x))+1\]
for all $x\in V$, where ${\rm val}_x$ is the valuation on $k(x)$ normalized by ${\rm val}_x(p)=1$.
As the strongly dominant algebraic weights ${\bf k}\in\Wcal^d$ such that  $M_i< k_{\sigma,i}-k_{\sigma,i+1}$
accumulate at $\omega(y_0)$ there is a weight ${\bf k}$ such that 
\begin{align*}
 k_{\sigma,i}-k_{\sigma,i+1}&>M_i,\\ 
\omega_d^{-1}({\bf k})\cap {U} \cap \pi^{-1}(X^{\rm sm})&\neq \emptyset\ \text{and}\\ 
\omega_d^{-1}({\bf k})\cap {U}\cap f(Y_{\bar\rho})&\neq \emptyset.
\end{align*} 

It follows that a point $y\in {V}$ with $f(y)\in \omega^{-1}({\bf k})\cap U$ is classical (i.e.~associated to an automorphic representation) and has the required property that $X^{\rm sm}\cap C\neq \emptyset$, where $C\subset \Xfrak_{\bf k}$ is the connected component containing $r_y=\pi(f(y))$: 
as $U\cap \omega_d^{-1}(\bf k)$ is connected it maps to $C$ and hence $\omega_d^{-1}({\bf k})\cap U \cap \pi^{-1}(X^{\rm sm})\neq \emptyset$ implies $X^{\rm sm}\cap C\neq \emptyset$


By Corollary $\ref{densefibres}$ the automorphic points are $R_{\bar r}$-dense in $C$ and by Proposition $\ref{genericO+open}$ the generic locus is $R_{\bar r}$-open in $C$. Hence there exists {a pair $(S',e')$ and a point $y'\in Y'_{\bar\rho}=Y(W_\infty,S',e')_{\bar\rho}$ (up to a lowering of the tame level $H^{v_0}$) such that the crystalline representation $r_{y'}$ given by the image of $y'$ in $\Xfrak_{\bar r}$ is generic and $y'$ lies on $C\subset \Xfrak_{\bf k}\subset \Xfrak_{\bar r}$.}
We write $f':Y'_{\bar\rho}\rightarrow X(\bar r)$ for the map defined by Theorem $\ref{maptofinslopespace}$ corresponding to the eigenvariety $Y'_{\bar\rho}$.

Now the triangulations of $r_{y'}$ are in bijection with the orderings of the Frobenius eigenvalues as the representation is regular and hence there are exactly $d!$ such triangulations. 
We claim that all these possible triangulations of $r_{y'}$ lie in ${f'(Y'_{\bar\rho})}$. \\
Let us write $\Pi$ for the automorphic representation associated to $y'$. 
As $\Pi_{v_0}$ is unramified, there are $d!$ distinct characters $\chi_i$ of $T_{v_0}/T_{v_0}^0$ such that $\Pi_{v_0}{|\det|^{\frac{1-d}{2}}}$ appears in the parabolic induction ${\rm Ind}_B^{\GL_d(K)}\chi_i$ and as the representation is generic the Hodge filtrations is in general position with the triangulation given by the ordering of the Frobenius eigenvalues.  It follows that the parameters of the triangulation are prescribed by the character $\chi_i$.

This shows that there are points $z_1,\dots, z_{d!}\in {X(\bar\rho,W_\infty)}^{\rm aut}$ with $z_i={f'}(\Pi,\chi_i)$ mapping to $r_{y'}\in\Xfrak_{\bar r}$ and $z_1,\dots,z_{d!}$ are precisely the $d!$ possible triangulations of the crystalline representation $r_{y'}$. Note that the assumption that $r_{y'}$ is generic implies that the trianguline filtrations have to be nowhere split and hence $z_i\in X(\bar\rho,W_\infty)^{\rm aut}\cap X(\bar r)^{\rm reg}$.
Using Proposition $\ref{Xreglarge}$ we may identify open subsets of $X(\bar\rho)^{\rm reg}$ with open subsets of $\Scal^{\rm ns}(\bar\rho)$.

Now there exist connected open neighborhoods $U_i\subset  X^{\rm reg}(\bar r)\cap X(\bar\rho,W_\infty)^{\rm aut}\cap \omega^{-1}({\bf k}) $ of the $z_i$ such that $U_i\cap U_j=\emptyset$. As the map $g_{{\bf k}}$ from Proposition $\ref{fibsvsKisin}$ is \'etale  at all the $z_i$ it follows from \cite[Proposition 1.7.8]{Huber} that $g_{\bf k}$ is open in some neighborhood of the $z_i$.  Hence it follows that (after eventually shrinking the $U_i$) the image of the $U_i$ in $\Xfrak_{\bf k}^{\rm reg}\cap C$ is open and after shrinking the $U_i$ even further we may assume that $U_z=g_{{\bf k}}(U_i)=g_{{\bf k}}(U_j)$ for all $i,j$. It follows that all the crystalline representations $r \in U_z$ have the property that $X(\bar\rho,W_\infty)^{\rm aut}\cap X^{\rm reg}(\bar r)$ contains all their possible triangulations. 

Finally the subset of all {generic} points $r\in C$ such that  the Frobenius eigenvalues satisfy condition (i) from above is a Zariski-open and dense subset of $C'\subset C$. Hence $U_z\cap C'\cap X^{\rm sm}\neq \emptyset$  and an element of this intersection is a point lying in $X^{\rm sm}$ and checking the conditions (i) and (ii) from above.
\end{proof}

\subsection{End of the proof}

Let us keep the notations from the preceding subsection. Fix a strongly dominant weight\footnote{not necessarily as in preceding section} ${\bf k}=(k_{\sigma,i})_\sigma\in \prod_\sigma\Z^d$ and recall the subset $\Wcal^d_{{\bf k}, \rm la}\subset \Wcal^d$. Recall further that we wrote $X_{\bf k}(\bar r)\subset X(\bar r)$ for the $R_{\bar r}$-closure of all crystabelline points of Hodge-Tate weight ${\bf k}$.

In this section we prove the following theorem which will imply the desired result on the density of potentially crystalline representations of fixed weight. 


%
%

\begin{theo}\label{densityinfinslopespace}
We have an inclusion $X(\bar\rho,W_\infty)^{\rm aut}\subset X_{\bf k}(\bar r)$.
\end{theo}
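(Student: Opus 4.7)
The plan is to prove the stronger assertion that crystabelline points of Hodge-Tate weight ${\bf k}$ are Zariski-dense in every irreducible component $\Ccal$ of $X(\bar\rho,W_\infty)^{\rm aut}$. Once this is established, since $X_{\bf k}(\bar r)$ is by construction $R_{\bar r}$-closed and hence Zariski-closed in $X(\bar r)$, any Zariski-dense subset of $\Ccal$ contained in $X_{\bf k}(\bar r)$ will force the inclusion $\Ccal\subset X_{\bf k}(\bar r)$, which gives the theorem.

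First I would verify the Zariski-density of $\Wcal^d_{{\bf k},{\rm la}}$ in $\Wcal^d$. Up to translation by the algebraic character $\delta_\Wcal({\bf k})$ this reduces to the Zariski-density of the $d$-th Cartesian power of the set of smooth characters of $\Ocal_K^\times$ of finite order. Such characters factor through the finite quotients $\Ocal_K^\times/(1+\varpi^n\Ocal_K)$ and accumulate at the trivial character, so the identity principle on the rigid analytic weight space $\Wcal$ yields Zariski-density in $\Wcal$, and hence in $\Wcal^d$.

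Next I would exploit the local product structure given by Lemma \ref{locallyaproduct}: on the Zariski-dense open subset $\Ccal\cap X(\bar\rho)^{\rm reg}$ the map $\omega$ is, locally around every point, identified with a projection $U\times D^m\to U$ with $U\subset\Wcal^d$ open and $D^m$ a polydisc, on which the parameters $\delta_i(\varpi)$, and in particular the Frobenius eigenvalues $a_i$ appearing in Corollary \ref{Zopeninfib}(ii), vary as independent coordinates. Combined with the previous paragraph, this implies that $\omega^{-1}(\Wcal^d_{{\bf k},{\rm la}})\cap\Ccal$ is Zariski-dense in $\Ccal$. Moreover, for each $w\in\Wcal^d_{{\bf k},{\rm la}}$ the subset $Z_{\rm pcris}(w)\subset\omega^{-1}(w)$ of Corollary \ref{Zopeninfib}(ii) is cut out by the generic non-vanishing conditions $a_i/a_j\neq p^{\pm[K_0:\Q_p]}$, so its intersection with $\Ccal$ is Zariski-open and dense in the fiber $\omega^{-1}(w)\cap\Ccal$.

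Putting the two ingredients together, the union
\[
\bigcup_{w\in\Wcal^d_{{\bf k},{\rm la}}}\bigl(Z_{\rm pcris}(w)\cap\Ccal\bigr)
\]
is Zariski-dense in $\Ccal$, and every one of its points is by construction crystabelline of Hodge-Tate weight ${\bf k}$. The Zariski-closed subset $X_{\bf k}(\bar r)\cap\Ccal$ therefore contains a Zariski-dense subset of $\Ccal$ and thus coincides with $\Ccal$. The main technical delicacy lies in transferring Zariski-density through the only locally defined product structure; this will be handled by using the irreducibility of $\Ccal$, a consequence of Proposition \ref{Xreglarge}, which allows density to be checked on any Zariski-open covering of $\Ccal$.
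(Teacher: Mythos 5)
There is a fundamental gap at the very first step of your argument: the set $\Wcal^d_{{\bf k},{\rm la}}$ is \emph{not} Zariski-dense in the rigid analytic space $\Wcal^d$. Up to translating by $\delta_\Wcal({\bf k})$, the set in question is the $d$-fold product of the set of finite-order smooth characters of $\Ocal_K^\times$. On each connected component of the weight space $\Wcal$, which is an open polydisc in coordinates $T_1,\dots,T_{[K:\Q_p]}$, the finite-order characters correspond to tuples of $p$-power roots of unity, and they are annihilated by the $p$-adic logarithm $\log(1+T_1)$. Since $\log(1+T_1)$ converges on the open unit disc and is a nonzero rigid analytic function on $\Wcal$, the set of finite-order characters is contained in the zero locus of a nonzero analytic function, hence is contained in a proper closed analytic subvariety of $\Wcal^d$. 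In fact, the finite-order characters form a discrete subset of the weight space and accumulate at no point (a primitive $p^n$-th root of unity $\zeta$ satisfies $|\zeta-1|=p^{-1/\phi(p^n)}\to 1$, so roots of unity of high order drift toward the boundary of the disc rather than clustering anywhere in the interior). Your appeal to ``the identity principle'' is the usual $p$-adic trap: a Zariski-closed analytic subset of a one-dimensional rigid space can be infinite and discrete, and there is no identity theorem that forces functions with infinitely many zeros in the open disc to vanish.

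This false density claim propagates through the rest of the argument and the conclusion is in fact wrong as stated: crystabelline points of Hodge--Tate weight ${\bf k}$ are \emph{not} Zariski-dense in a component $\Ccal$ of $X(\bar\rho,W_\infty)^{\rm aut}$. Indeed every such point lies in $\omega^{-1}(\Wcal^d_{{\bf k},{\rm la}})$, which is contained in the zero locus of the nonzero function $\log\circ\omega$ on $\Ccal$ (the map $\omega$ is open by Lemma~\ref{locallyaproduct}, so $\log\circ\omega$ does not vanish identically on $\Ccal$). The correct and subtler statement, which is what the theorem actually asserts, is that these points are $R_{\bar r}$-dense: the $R_{\bar r}$-closure $X_{\bf k}(\bar r)$ is cut out only by elements of the deformation ring $R_{\bar r}$, which map to \emph{bounded} functions on $\Ccal$, and the logarithm is not bounded. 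The gap between Zariski-density and $R_{\bar r}$-density is exactly the reason the paper cannot argue purely locally and must instead pass through the eigenvariety: the accumulation phenomenon that is actually available is that of strongly dominant \emph{algebraic} weights at classical points (algebraic weights do accumulate at each other, $(1+p)^{p^n m}\to 1$), and converting density of crystalline points of \emph{varying} algebraic weight ${\bf k}'$ into $R_{\bar r}$-density of crystabelline points of the \emph{fixed} weight ${\bf k}$ requires the detour through the crystalline deformation spaces $\Xfrak_{\bar r,{\bf k}'}^{\rm cris}$, Taylor--Wiles--Kisin patching, and Theorem~\ref{ImageEV}.
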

\begin{proof}
It follows from Corollary $\ref{Rdensityoneigenvar}$ (resp. Theorem $\ref{ImageEV}$) that 
\[f(Y_{\bar\rho})\subset X_{\bf k}(\bar r)\] 
for every eigenvariety $Y_{\bar\rho}=Y(W_\infty,S,e)_{\bar\rho}$.

%
Let $(S,e)$ and $Y_{\bar\rho}=Y(W_\infty,S,e)_{\bar\rho}$ be chosen so that we can find a point $y\in Y_{\bar\rho}$ such that $x=f(y)\in X(\bar r)^{\rm reg}$. By definition $\omega(x)=\omega_Y(y)$ is a strongly dominant algebraic weight ${\bf k}'$. Let us write $X_{\bf k'}$ for the intersection of $\omega^{-1}({\bf k}')$ with $X\cap X(\bar r)^{\rm reg}$. Let $C$ denote the connected component of $X_{\bf k'}$ containing $x$ and let $C^{\rm cris}\subset C$ be the Zariski-open (and dense) subset of crystalline points. 

By construction $C$ maps under the projection to $\Xfrak_{\bar r}$ to a connected component $C'$ of $\Xfrak_{\bar r,{\bf k}'}^{\rm cris}$. We write $R'$ for the quotient of $R_{\bar r,{\bf k}}^{\rm cris}$ corresponding to $C'$.
Further the component $C'$ is an automorphic component, as by assumption the representation defined by $x$ extends to an automorphic Galois representation. 

Let $t\in R_{\bar r}$ be an element vanishing on $X_{\bf k}(\bar r)$ and consider its image, still denoted by $t$, in $R'$. Let $z'\in C'$ be an automorphic point corresponding to a crystalline $\Gcal_K$-representation $r_{z'}$. By definition, we can find an irreducible cuspidal automorphic representation $\Pi$ of an unitary group $G$, of some level $\tilde{H}\subset H$ as in section \ref{eigenvarieties} such that $(\rho_{\Pi})_{w_0}\simeq r_{z'}$. Then there exists a triple $(W_\infty,S',e')$ and a character $\chi$ of $T/T^0$ such that $(\Pi,\chi)\in\Zcal$. Here $\Zcal$ is the set of classical points of the eigenvariety $Y(W_\infty,S',e')$. Now Theorem $\ref{ImageEV}$ implies that the image of $(\Pi,\chi)\in Y(W_\infty,S',e')_{\bar\rho_{\Pi}}$ in $X(\bar r)$ is in fact contained in $X_{\bf k}(\bar r)$, {or equivalently that $t(z')=0$}. {From} Corollary \ref{densefibres} {we can deduce that} the image $t$ in $R'$ is zero. As the image of $C$ under $\pi:X(\bar r)\rightarrow \Xfrak_{\bar r}$ {is contained in} $C'$, the elements $t$ vanishes on $C$, which implies that $C\subset X(\bar\rho,W_\infty)^{\rm aut}$.

We can now conclude. Fix $\Ccal$ a component of $X(\bar\rho,W_\infty)^{\rm aut}$ and $y_0\in Y_{\bar\rho}$ such that $f(y_0)=x_0\in X\cap X(\bar r)^{\rm reg}$. Let ${\bf k}_0'=\omega_Y(y)$ and pick $\tilde{x}_0\in\Scal^{\rm ns}(\bar r)$ such that $\pi_{\bar r}(\tilde{x}_0)=x_0$. By Lemma $\ref{locallyaproduct}$ there is a quasi-compact connected neighborhood $U$ of $x$ inside $X^{\rm reg}(\bar r)$ such that $U$ is isomorphic to a product of an open subset $U_1\subset \Wcal^d$ with a rigid space $U_2$ which we may chose to be connected. After shrinking $U_1$ we may also assume that $f(Y_{\bar\rho})\cap U$ surjects onto $U_1$. 
As $U$ is quasi-compact, there exist $M_1,\dots, M_d$ such that
\[M_i\geq {\rm val}_x(\psi_{v_0}(t_{v_0,1}\dots t_{v_0,i})(x))+1\]
for all $x\in U$, where ${\rm val}_x$ is the valuation on $k(x)$ normalized by ${\rm val}_x(p)=1$. Let us write $Z_1\subset U_1$ for the set of strongly dominant algebraic weights $k_{\sigma,1}> \dots> k_{\sigma,d}$ such that $M_i< k_{\sigma,i}-k_{\sigma,i+1}$ for all $i$ and $\sigma:K\hookrightarrow \bar\Q_p$.
Then $Z_1$ is Zariski dense in $U_1$ and hence $\omega^{-1}(Z_1)\cap U$ is Zariski dense in $U$.

A point  $y\in f^{-1}(U)$ mapping to ${\bf k'}\in Z_1$ is classical by the choice of $M_i$ and \cite[Theorem 1.6 (vi)]{Ch3}. Applying what preceeds with the point $y$ and the component of $X_{\bf k'}$ containing $\omega^{-1}(\{{\bf k'}\})\cap U\simeq U_2$ we find that $\omega^{-1}(\{{\bf k'}\})\cap U\subset X_{\bf k}(\bar r)$. This implies that $X_{\bf k}(\bar r)$ contains $\omega^{-1}(Z_1)\cap U$ which is Zariski dense in $U$ and hence $U\subset X_{\bf k}(\bar r)$. This implies that $X_{\bf k}(\bar r)$ contains $\Ccal$.


%
\end{proof}

\begin{theo}
Let $p\nmid 2d$ and let $K$ be a finite extension of $\Q_p$. Let  $\bar r: \Gcal_K\rightarrow \GL_d(\Fbb)$ be an absolutely irreducible continuous representation and let $R_{\bar r}$ be its universal deformation ring. 
Assume that $\bar r\not\cong \bar r(1)$. Let ${\bf k}=(k_{i,\sigma})\in \prod_{\sigma:K\hookrightarrow \bar\Q_p}\Z^d$ be a strongly dominant weight. Then the representations that are crystabelline of labeled Hodge-Tate weight ${\bf k}$ are Zariski-dense in $\Spec R_{\bar r}[1/p]$.
\end{theo}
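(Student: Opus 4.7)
The plan is to combine the two density results of this section (Theorems \ref{crystrepdense} and \ref{densityinfinslopespace}) with the fact that, under the hypothesis $\bar r\not\cong\bar r(1)$, the local deformation space $\Xfrak_{\bar r}=(\Spf R_{\bar r})^{\rm rig}$ is irreducible. First I would check this irreducibility: absolute irreducibility of $\bar r$ gives $H^0(\Gcal_K,\ad\bar r)\cong \Fbb$, and $\bar r\not\cong\bar r(1)$ combined with local Tate duality gives $H^2(\Gcal_K,\ad\bar r)\cong H^0(\Gcal_K,\ad\bar r(1))^\vee=0$. The deformation problem is therefore unobstructed, so $R_{\bar r}\cong W(\Fbb)\dbl T_1,\ldots,T_{1+d^2[K:\Q_p]}\dbr$ is a formal power series ring and $\Xfrak_{\bar r}$ is an open polydisc over $L$, in particular irreducible as a rigid analytic space.

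Next, I would use the hypothesis $p\nmid 2d$ through the global construction recalled just before Lemma \ref{existence}: combining Proposition \ref{potdiag} with \cite[Corollary A.7]{EmertonGee} realizes $\bar r$ as the restriction of a suitable automorphic residual Galois representation $\bar\rho:\Gcal_F\to\Gcal_d(\bar\Fbb_p)$ to the decomposition group at some place of $E$ above $p$. By Lemma \ref{existence} I can then choose an algebraic weight $W_\infty$ with $\hat S(H^{v_0},L)_{\bar\rho}\neq 0$; for an appropriate choice of auxiliary data $(S,e)$ this gives a nonempty eigenvariety $Y(W_\infty,S,e)_{\bar\rho}$ and hence, via the map of Theorem \ref{maptofinslopespace} and the definition given in Section 4.2, a nonempty automorphic finite-slope space $X(\bar\rho,W_\infty)^{\rm aut}\subset X(\bar r)$.

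Now I would invoke Theorem \ref{crystrepdense}: the image of $X(\bar\rho,W_\infty)^{\rm aut}$ in $\Xfrak_{\bar r}$ is Zariski-dense in some nonempty union of irreducible components of $\Xfrak_{\bar r}$, which by the irreducibility just established must be all of $\Xfrak_{\bar r}$. On the other hand, Theorem \ref{densityinfinslopespace} gives $X(\bar\rho,W_\infty)^{\rm aut}\subset X_{\bf k}(\bar r)$, and $X_{\bf k}(\bar r)$ is by definition the $R_{\bar r}$-closure in $X(\bar r)$ of the crystabelline points of weight ${\bf k}$. Hence any $f\in R_{\bar r}$ vanishing on every crystabelline representation of weight ${\bf k}$ vanishes on the image of $X_{\bf k}(\bar r)$ in $\Xfrak_{\bar r}$, hence on the image of $X(\bar\rho,W_\infty)^{\rm aut}$, hence on all of $\Xfrak_{\bar r}$. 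Since $R_{\bar r}$ is a power series ring the map $R_{\bar r}\to\Gamma(\Xfrak_{\bar r},\Ocal_{\Xfrak_{\bar r}})$ is injective, so $f=0$; this is exactly the desired Zariski-density in $\Spec R_{\bar r}[1/p]$.

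All the real work lies in Theorems \ref{crystrepdense} and \ref{densityinfinslopespace} and the globalization and patching results behind them; the present step is mostly bookkeeping, the one genuinely essential observation being that the ``union of irreducible components'' appearing in Theorem \ref{crystrepdense} coincides with the whole of $\Xfrak_{\bar r}$ precisely because of the hypothesis $\bar r\not\cong\bar r(1)$, and that $R_{\bar r}$-density in $\Xfrak_{\bar r}$ translates, via the injectivity of $R_{\bar r}[1/p]\to\Gamma(\Xfrak_{\bar r},\Ocal_{\Xfrak_{\bar r}})$, into Zariski-density in $\Spec R_{\bar r}[1/p]$.
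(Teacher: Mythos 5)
Your proposal is correct and follows essentially the same route as the paper: establish that $\Spec R_{\bar r}[1/p]$ is irreducible from unobstructedness, use the globalization plus Lemma \ref{existence} to get a nonempty $X(\bar\rho,W_\infty)^{\rm aut}$, invoke Theorem \ref{crystrepdense} to see its image is Zariski-dense, and Theorem \ref{densityinfinslopespace} to see it lies in $X_{\bf k}(\bar r)$. The paper routes the last step through Corollary \ref{Zopeninfib} (density of crystabelline points in the locally algebraic fibers) to argue $t$ vanishes on $X\cap\omega^{-1}(\Wcal^d_{{\bf k},\mathrm{la}})$, whereas you argue more directly from the definition of $X_{\bf k}(\bar r)$ as the $R_{\bar r}$-closure of the crystabelline points — both are sound and amount to the same thing.
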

\begin{proof}


The assumptions that $\bar r$ is absolutely irreducible and $\bar r\not\cong \bar r(1)$ imply that $Z=\Spec R_{\bar r}[1/p]$ is smooth and irreducible.

Let $X=X(\bar\rho,W_\infty)^{\rm aut}\subset X(\bar r)$ for a suitable choice of $W_\infty$ as in Lemma $\ref{existence}$. 
Our assumptions imply that the $X$ is non-empty and hence Theorem $\ref{crystrepdense}$ implies that $X$ has dense image in $Z$.  
Let $t\in R_{\bar\rho}$ be a function vanishing on all crystabelline points of weight ${\bf k}$. Then Corollary $\ref{Zopeninfib}$ implies that it vanishes on $X\cap \omega^{-1}(\Wcal^d_{{\bf k},{\rm la}})$ and hence by Theorem $\ref{densityinfinslopespace}$ is vanishes on $X$. The claim follows as $X$ has dense image in $Z$.
\end{proof}

%

\bigskip

\parbox[t]{8.2cm}{ 
Eugen Hellmann\\
Mathematisches Institut\\
Universit\"at Bonn\\
Endenicher Allee 60\\
D-53115 Bonn \\
Germany
\\[1mm]
hellmann@math.uni-bonn.de
}
\parbox[t]{8.2cm}{ 
Benjamin Schraen \\ 
Laboratoire de Math\'ematique\\
Universit\'e de Versailles St.~Quentin\\
45 Avenue des \'Etats Unis\\
F-78035 Versailles
\\ France
\\[1mm]
{ benjamin.schraen@uvsq.fr}
}

\end{document}